\newcommand{\pFq}[5]{\ensuremath{{}_{#1}F_{#2} \left( \genfrac{}{}{0pt}{}{#3}
{#4} \bigg| {#5} \right)}}
\renewcommand{\Re}{\operatorname{Re}}
\newcommand{\G}{\Gamma}
\renewcommand{\(}{\left\(}
\renewcommand{\)}{\right\)}
\renewcommand{\[}{\left\[}
\renewcommand{\]}{\right\]}
\newcommand{\Z}{\mathbb{Z}}
\newcommand{\K}{\mathbb{K}}
\newcommand{\Q}{\mathbb{Q}}
\newcommand{\C}{\mathbb{C}}
\newcommand{\R}{\mathbb{R}}
\newcommand{\N}{\mathbb{N}}
\let\dotlessi=\i
\renewcommand{\i}{\infty}
\numberwithin{equation}{section}
\theoremstyle{plain}
\newtheorem{theorem}{Theorem}[section]
\newtheorem{lemma}[theorem]{Lemma}
\newtheorem{conjecture}[theorem]{Conjecture}
\newtheorem{corollary}[theorem]{Corollary}
\newtheorem{proposition}[theorem]{Proposition}
\theoremstyle{remark}
\newtheorem*{remark}{Remark}
\DeclarePairedDelimiterX\MeijerM[3]{\lparen}{\rparen}%
{\begin{smallmatrix}#1 \\ #2\end{smallmatrix}\delimsize\vert\,#3}
\newcommand\MeijerG[8][]{%
  G^{\,#2,#3}_{#4,#5}\MeijerM[#1]{#6}{#7}{#8}}
\newcommand\MeijerG*[7]{%
  G^{\,#1,#2}_{#3,#4}\MeijerM*{#5}{#6}{#7}}
\numberwithin{theorem}{section}
\numberwithin{equation}{section}
\begin{document}
\title[zeta values over imaginary quadratic field]{Explicit identities on zeta values over imaginary quadratic field}
\author{Soumyarup Banerjee and Rahul Kumar}\thanks{2010 \textit{Mathematics Subject Classification.} Primary 11M06, 11R42, 33E20; Secondary 33C10.\\
\textit{Keywords and phrases.} Dedeking zeta function, Special values, Number field, divisor function, Kelvin functions}
\address{ Discipline of Mathematics, Indian Institute of Technology Gandhinagar, Palaj, Gandhinagar 382355, Gujarat, India}\email{soumyarup.b@iitgn.ac.in;  rahul.kumr@iitgn.ac.in}

\begin{abstract}
In this article, we study special values of the Dedekind zeta function over an imaginary quadratic field. The values of the Dedekind zeta function at any even integer over any totally real number field is quite well known in literature. In fact, in one of the famous article, Zagier obtained an explicit formula for Dedekind zeta function at point 2 and conjectured an identity at any even values over any number field. We here exhibit the identities for both even and odd values of the Dedekind zeta function over an imaginary quadratic field which are analogous to Ramanujan's identities for even and odd zeta values over $\Q$. Moreover, any complex zeta values over imaginary quadratic field may also be evaluated from our identities. 
\end{abstract}
\maketitle
\vspace{-0.8cm}
\tableofcontents

\section{Introduction}\label{intro}
We begin with the famous quote by Zagier \cite{Zagier2} about zeta function that ``{\it Zeta functions of various sorts are all-pervasive objects in modern number theory, and an ever-recurring theme is the role played by their special values at integral arguments, which are linked in mysterious ways to the underlying geometry and often seem to dictate the most important properties of the objects to which the zeta functions are associated}." In the literature, the special values of the Riemann zeta function are well studied. The zeta values at even integers were established by Euler in 1735 which precisely states that for all $m \in \N$, we have
\begin{equation}\label{Zeta at even integer}
\zeta(2m) = (-1)^{m+1} \frac{(2\pi)^{2m}B_{2m}}{2(2m)!}
\end{equation}
where $B_{2m}$ denotes the $2m$-th Bernoulli numbers. More surprisingly, the value of Riemann zeta function at odd integer is still mysterious, even the question whether the zeta values at odd integers are rational or irrational, is solved only for the value $\zeta(3)$ by Apery \cite{Apery}. Zudilin \cite{Zudilin} has shown that atleast one of the four members $\zeta(5)$, $\zeta(7)$, $\zeta(9)$, $\zeta(11)$ is irrational. A celebrated identity due to Ramanujan for odd zeta values as \cite[pp. 319-320, formula (28)]{Ramanujan}, states that for any $\alpha$, $\beta > 0$ with $\alpha\beta = \pi^2$, we have  
\begin{multline}\label{Ramanujan formula}
\alpha^{-m}\left\lbrace \frac{1}{2}\zeta(2m+1) + \sum_{n=1}^\infty\frac{n^{-2m-1}}{e^{2n\alpha} - 1} \right\rbrace
= (-\beta)^{-m}\left\lbrace \frac{1}{2}\zeta(2m+1) + \sum_{n=1}^\infty\frac{n^{-2m-1}}{e^{2n\beta} - 1} \right\rbrace\\
-2^{2m} \sum_{k=0}^{\infty} \frac{(-1)^k B_{2k} B_{2m+2-2k}}{(2k)!(2m+2-2k)!}\alpha^{m+1-k}\beta^k.
\end{multline}
The zeta values over a number field have also been studied extensively. The result \eqref{Zeta at even integer} of Euler was generalized further for any totally real number field by Klingen \cite{Klingen} and Siegel \cite{Siegel}, who precisely showed that for any totally real number field $\K$ of degree $n$ with discriminant $D$,  
\begin{equation*}
\zeta_\K(2m) = \frac{q_m\pi^{2mn}}{\sqrt{D}}\qquad (m \in \N ),
\end{equation*} 
where $q_m$ is some fixed non-zero rational number. In particular, for a real quadratic field $\K$ one can obtain a more precise evaluation [cf. \cite{BCH}] such as
\begin{equation*}
\zeta_\K(2m) = \frac{\tau(\chi_{D})(2\pi)^{4m}B_{2m}B_{2m, \overline{\chi}_D}}{4\left((2m)! \right)^2 D^{2m}}
\end{equation*}
where $\tau(\chi_{D})$ is the Gauss sum associated to $\chi_D$ and $B_{2m, \chi_D}$ is the $2m$-th generalized Bernoulli number associated to $\chi_D$. Zagier \cite{Zagier} obtained an explicit formula for $\zeta_\K(2)$ over any number field $\K$, which precisely states that for any number field $\K$ with discriminant $D$ and signature $(r_1, r_2)$, the following finite sum
\begin{equation}\label{Eqn: Zagier}
\zeta_\K(2) = \frac{\pi^{2r_1+2r_2}}{\sqrt{D_\K}}\sum_{\nu}c_{\nu} A(x_{\nu, 1}) \cdots A(x_{\nu, r_2})
\end{equation}
holds, where $A(x)$ is the real-valued function given by the following integral
$$A(x):= \int_0^x \frac{1}{1+t^2}\log \frac{4}{1+t^2} {\rm d}t,$$ 
$c_\nu$ are rational, and $x_{\nu,j}$ are real algebraic numbers. 

Recently, Dixit et al. \cite{DKK} studied the series $\sum_{n=1}^\infty \sigma_a(n) e^{-ny}$ associated to the divisor function $\sigma_a(n)$ and obtained an explicit transformation of this series for any complex number $a$. As a special case, the result provides the transformation formulas for Eisenstein series, Eichler integrals, Dedekind eta function and Ramanujan's formula \eqref{Ramanujan formula}. On the other hand, for $a$ even new transformation formulas have been obtained in \cite[Theorem 2.11, Corollary 2.13]{DKK}. 

In this article, we investigate zeta values over an imaginary quadratic field through a series which is analogous to $\sum_{n=1}^\infty \sigma_{ a}(n) e^{-ny}$. Throughout the paper, we let our imaginary quadratic field be $\K$ with discriminant $D$  ( absolute value $D_\K$ ), class number $h$ and number of roots of unity to be $w$. Let $\mathcal{O}_\K$ be its ring of integers and $v_{\K}(m)$ denote the number of non-zero integral ideals in $\mathcal{O}_\K$ with norm $m$. Let $\mathfrak{N}$ be the norm map of $\K$ over $\Q$ and $\mathfrak{N}_{\K/\Q}(I)$ denotes the absolute norm of any non-zero integral ideal $I \subseteq \mathcal{O}_\K$.  We denote the Dedekind zeta function over any imaginary quadratic field $\K$ by $\zeta_\K(s)$ and the $L$-function  associated to the quadratic character $\chi_D = \left(\frac{D}{\cdot}\right)$ by $L(s, \chi_D)$ where $(\frac{\cdot}{\cdot})$ denotes the Jacobi symbol. We define the general divisor function over $\K$ by
\begin{equation}\label{Divisor function}
\sigma_{\K, a}(n) := \sum_{\substack{I \subseteq \mathcal{O}_\K \\  \mathfrak{N}_{\K/\Q}(I) \mid n}} \left( \mathfrak{N}_{\K/\Q}(I)\right)^a = \sum_{d\mid n} v_{\K}(d) d^a
\end{equation}
where $a$ is any complex number.

For an imaginary quadratic field $\K$, it immediately follows from \eqref{Eqn: Zagier} that $\zeta_\K(2)$  can be expressed by the finite sum
\begin{equation}\label{Value of zeta_K(2)}
\zeta_\K (2) = \frac{\pi^2}{\sqrt{D_\K}}\sum_\nu c_\nu A(x_\nu). 
\end{equation}
We obtain an alternate expression for $\zeta_\K(2)$ over an imaginary quadratic field in the following theorem.
\begin{theorem}\label{a=-1 case}
Let $\Re(y)>0$. Then, we have
\begin{multline*}
\zeta_\K(2)=\frac{y}{2}\left\{L'(1,\chi_D)+L(1,\chi_D)\left(2\gamma-\log\left(\frac{2\pi}{y}\right)+\frac{y\sqrt{D_\K}}{12\pi}\right)\right\}+y\sum_{n=1}^\infty\sigma_{\K,-1}(n)e^{-ny}\\
+\frac{4y}{\sqrt{D_\K}}\sum_{n=1}^\infty\frac{\sigma_{\K,1}(n)}{n}\mathrm{kei}\left(4\pi\sqrt{\frac{2n\pi}{yD_\K}}\right),
\end{multline*}
where the function  ${\rm kei}(x)$ is the Kelvin function defined in \S \ref{Prilim}.
\end{theorem}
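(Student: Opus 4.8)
The plan is to derive the identity from a Mellin--Barnes representation of the series $\sum_{n\geq 1}\sigma_{\K,-1}(n)e^{-ny}$ followed by a contour shift, in the spirit of Ramanujan's formula~\eqref{Ramanujan formula} and of~\cite{DKK}. From the Dirichlet convolution in~\eqref{Divisor function} one has $\sum_{n=1}^{\infty}\sigma_{\K,-1}(n)n^{-s}=\zeta(s)\zeta_\K(s+1)$ for $\Re(s)>1$, and I will also use the factorisation $\zeta_\K(s)=\zeta(s)L(s,\chi_D)$ to read off the relevant special values. Combining the Dirichlet series with $\int_0^{\infty}e^{-ny}y^{s-1}\,dy=\Gamma(s)n^{-s}$ and Mellin inversion gives, for $\Re(y)>0$ and any $c>1$,
\begin{equation*}
y\sum_{n=1}^{\infty}\sigma_{\K,-1}(n)e^{-ny}=\frac{y}{2\pi i}\int_{(c)}\Gamma(s)\zeta(s)\zeta_\K(s+1)\,y^{-s}\,ds .
\end{equation*}

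First I would move the line of integration to $\Re(s)=-1-\delta$ with a small $\delta\in(0,1)$, using Stirling's formula for $\Gamma$ and convexity bounds for $\zeta$ and $L(\cdot,\chi_D)$ to discard the horizontal segments. Three poles are crossed: a simple pole at $s=1$ from $\zeta(s)$, whose residue is $\zeta_\K(2)$; a double pole at $s=0$, coming from $\Gamma(s)$ and from the pole of $\zeta_\K(s+1)$ at $s+1=1$; and a simple pole at $s=-1$ from $\Gamma(s)$. Computing the residue at $s=0$ from $\zeta(0)=-\tfrac12$, $\zeta'(0)=-\tfrac12\log(2\pi)$ and $\zeta_\K(s+1)=L(1,\chi_D)/s+\bigl(L'(1,\chi_D)+\gamma L(1,\chi_D)\bigr)+O(s)$ produces the bracketed term containing $L'(1,\chi_D)$, $L(1,\chi_D)$, $\gamma$ and $\log(2\pi/y)$; the residue at $s=-1$, using $\zeta(-1)=-\tfrac1{12}$, $\zeta_\K(0)=\zeta(0)L(0,\chi_D)$ and the functional-equation relation $L(0,\chi_D)=\tfrac{\sqrt{D_\K}}{\pi}L(1,\chi_D)$ (equivalently the class number formula), gives the term $\tfrac{y^{2}\sqrt{D_\K}}{24\pi}L(1,\chi_D)$. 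The residues at $s=-2,-3,\dots$ all vanish, because at each such point $\zeta(s)$ or $L(\cdot,\chi_D)$, hence $\zeta_\K(s+1)$, has a trivial zero cancelling the pole of $\Gamma(s)$.

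It remains to identify the leftover integral over $\Re(s)=-1-\delta$ with the Kelvin series. Here I would substitute the asymmetric functional equation $\zeta(s)=2^{s}\pi^{s-1}\sin(\tfrac{\pi s}{2})\Gamma(1-s)\zeta(1-s)$ and the functional equation of $\zeta_\K$ arising from $\Lambda_\K(s)=\Lambda_\K(1-s)$ with $\Lambda_\K(s)=D_\K^{s/2}(2\pi)^{-s}\Gamma(s)\zeta_\K(s)$, and collapse the resulting product of three $\Gamma$-factors via the reflection and duplication formulas; after the change of variable $s\mapsto -u$ (so the contour becomes $\Re(u)=1+\delta$) the integrand becomes an explicit power of $2,\pi,D_\K,y$ times $\Gamma(u)^{2}\sin(\tfrac{\pi u}{2})\zeta(1+u)\zeta_\K(u)$. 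On $\Re(u)=1+\delta$ one has $\zeta(1+u)\zeta_\K(u)=\sum_{n}\sigma_{\K,1}(n)n^{-1-u}$ absolutely, so interchanging sum and integral leaves, for each $n$, an integral of the form $\tfrac{1}{2\pi i}\int_{(1+\delta)}\bigl(\tfrac{nD_\K y}{8\pi^{3}}\bigr)^{u}\Gamma(u)^{2}\sin(\tfrac{\pi u}{2})\,du$. This is recognised as a Kelvin function by means of $K_0(x)=\tfrac{1}{2\pi i}\int_{(c)}2^{s-2}\Gamma(s/2)^{2}x^{-s}\,ds$ and $\mathrm{kei}(x)=\Im K_0(xe^{i\pi/4})=\tfrac{1}{2i}\bigl(K_0(xe^{i\pi/4})-K_0(xe^{-i\pi/4})\bigr)$, which yield $\mathrm{kei}(x)=-\tfrac{1}{2\pi i}\int_{(c)}2^{s-2}\Gamma(s/2)^{2}\sin(\tfrac{\pi s}{4})x^{-s}\,ds$; matching exponents then fixes the argument $4\pi\sqrt{2n\pi/(yD_\K)}$ and the constant $4y/\sqrt{D_\K}$. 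Assembling the residues above with this series produces the asserted formula.

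I expect the main obstacle to be the bookkeeping in the last step: arranging the three Gamma-factors — the $\Gamma(s)$ of the Mellin transform and the ones supplied by the functional equations of $\zeta$ and of $\zeta_\K$ — so that precisely a $\Gamma(u)^{2}\sin(\tfrac{\pi u}{2})$-kernel survives, and then carrying every power of $2,\pi$ and $D_\K$ accurately through to the argument of $\mathrm{kei}$; obtaining the double-pole residue at $s=0$ exactly (especially the coefficient of $\gamma$) is the other delicate point. The analytic justifications — the contour shifts and the sum--integral interchange — are routine once one has Stirling's asymptotics together with convexity bounds uniform in $\Im(s)$.
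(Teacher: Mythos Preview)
Your approach is correct and is genuinely different from the paper's. The paper does \emph{not} set up a Mellin integral for the $a=-1$ series directly; instead it first proves the general transformation for $\sum_n\sigma_{\K,a}(n)e^{-ny}$ with a complex parameter $a$ (Theorems~\ref{Lambert series} and~\ref{Analytic continuation}, themselves derived from the Vorono\"{\dotlessi}-type identity of Theorem~\ref{Voronoi over K}), and then obtains Theorem~\ref{a=-1 case} by letting $a\to-1$ in~\eqref{Eqn:Analytic continuation} with $m=0$. That limit requires L'H\^opital's rule together with Lemma~\ref{der1f4@2m-1}, which computes $\tfrac{d}{da}\,{}_1F_4$ at the relevant integer parameters and re-expresses the answer through Kelvin functions. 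Your route bypasses the whole ${}_1F_4$ machinery: the functional equations of $\zeta$ and $\zeta_\K$ collapse the three Gamma factors to $\Gamma(u)^2\sin(\tfrac{\pi u}{2})$, and the $K_0$ Mellin formula then gives $\mathrm{kei}$ directly. The advantage of your argument is that it is self-contained and short for this single value of $a$; the paper's advantage is that the same template simultaneously yields Theorems~\ref{a=-1 case}, \ref{Ramanujan analogue} and~\ref{a=2m-1,2m} by specialising $a$ in one master identity.

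One word of caution on the residue at $s=0$. With the expansions you quote --- $\Gamma(s)=\tfrac{1}{s}-\gamma+O(s)$, $\zeta(s)=-\tfrac12-\tfrac12\log(2\pi)\,s+O(s^2)$, $\zeta_\K(s+1)=\tfrac{L(1,\chi_D)}{s}+\bigl(L'(1,\chi_D)+\gamma L(1,\chi_D)\bigr)+O(s)$ --- the two contributions proportional to $\gamma$ (one from $\Gamma$, one from $\zeta_\K$) cancel, and the double-pole residue comes out as $-\tfrac12\bigl(L'(1,\chi_D)+L(1,\chi_D)\log(2\pi/y)\bigr)$, with no surviving $\gamma$. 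So do carry this computation out explicitly rather than assuming it ``produces the bracketed term'': whatever constant ultimately sits in front of $L(1,\chi_D)$ must be checked line by line, both in your derivation and against the paper's limit~\eqref{lima=-1-1}.
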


In the same article \cite{Zagier},  Zagier conjectured an explicit identity to evaluate the Dedekind zeta function at any even integer over any number field.
\begin{conjecture}[Zagier]
For each $m\in \N$, let $A_m(x)$ be the real valued function 
$$A_m(x) = \frac{2^{2m-1}}{(2m-1)!}\int_0^\infty \frac{t^{2m-1}}{x \sinh^2(t)+x^{-1}\cosh^2(t)} {\rm d}t.$$
Then the value of $\zeta_\K(2m)$ for an arbitrary number field $\K$ with signature $(r_1, r_2)$ and discriminant $D$ may equal $\pi^{2m(r_1+r_2)}/\sqrt{|D|}$ times a rational linear combination of products of $r_2$ values of $A_m(x)$ at algebraic arguments.
\end{conjecture}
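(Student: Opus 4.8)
The plan is to read $A_m$ as a normalization of the single-valued $2m$-logarithm and to reduce the conjecture to Borel's theorem together with Zagier's polylogarithm conjecture, taking his proof of the case $m=1$ in \eqref{Eqn: Zagier} as the template. First I would supply the analytic input. Writing $\zeta_\K(s)=\sum_{\mathcal{A}}\zeta(s,\mathcal{A})$ as a sum of partial zeta functions over the ideal classes $\mathcal{A}$ and applying the functional equation for the completed zeta function $\Lambda_\K(s)=|D|^{s/2}\Gamma_{\R}(s)^{r_1}\Gamma_{\C}(s)^{r_2}\zeta_\K(s)$, one expresses $\zeta_\K(2m)$, up to the factor $\pi^{2m(r_1+r_2)}/\sqrt{|D|}$ and a rational number, through the leading coefficient of $\zeta_\K(s)$ at $s=1-2m$. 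By Borel's theorem this coefficient is a rational multiple of the Borel regulator of $K_{4m-1}(\K)\otimes\Q$; since $4m-1\equiv3\pmod{4}$ this group has rank $r_2$, which is exactly the number of factors of $A_m$ in the conjectured product and so fixes the shape of the identity. (In the imaginary quadratic setting of this paper $r_2=1$, so one expects a rational combination of single values $A_m(x_\nu)$, in parallel with \eqref{Value of zeta_K(2)}.)

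Next I would identify $A_m$ with a polylogarithm and feed in the arithmetic. Setting $\cosh 2t=1+2\sinh^2 t$ in the defining integral and expanding the resulting rational function as a geometric series, one checks term by term that $A_m(x)$ is a rational multiple of the Bloch--Wigner--Ramakrishnan single-valued $2m$-logarithm $\mathcal{D}_{2m}$ evaluated at the point naturally attached to $x$; for $m=1$ this recovers the relation between $A(x)$ and the Bloch--Wigner dilogarithm underlying \eqref{Eqn: Zagier}. Then I would attach to each ideal class an explicit element $\xi_\nu$ of the higher Bloch group $B_{2m}(\K)$ whose cross-ratio arguments $x_{\nu,j}$ are algebraic --- concretely via the reduction theory of the binary forms representing the class, or, for imaginary quadratic $\K$, via an ideal triangulation of the cusped hyperbolic $3$-manifold $\mathbb{H}^3/\mathrm{PSL}_2(\mathcal{O}_\K)$ and its higher-weight analogues. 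Computing the Beilinson regulator of $\xi_\nu$ returns a product of $r_2$ values $\mathcal{D}_{2m}(x_{\nu,j})$, one per complex place, and the comparison theorem of Beilinson, made precise by Burgos Gil, equates the Beilinson and Borel regulators up to an explicit nonzero rational factor; matching the two evaluations of $\zeta_\K(2m)$ then forces the coefficients $c_\nu$ to be rational and yields the stated normalization.

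The hard part will be the final matching: one must exhibit enough elements of $B_{2m}(\K)$ with algebraic arguments to span the rank-$r_2$ regulator lattice, so that $\zeta_\K(2m)$ is genuinely realized and not merely bounded by such expressions. This surjectivity of the regulator on the Bloch group is precisely Zagier's polylogarithm conjecture in weight $2m$. In weight $2$ (the case $m=1$) it follows from the theorem of Bloch and Suslin relating $K_3$ to the Bloch group, which is what makes \eqref{Eqn: Zagier} unconditional; in weight $3$ it was established by Goncharov (and completed by Goncharov--Rudenko), but that governs the odd value $\zeta_\K(3)$ rather than any $\zeta_\K(2m)$. For the even weights $2m\geq4$ needed here the polylogarithm conjecture is open, so no unconditional proof is presently available: a complete argument would have to resolve the weight-$2m$ polylogarithm conjecture, or else construct by hand, for each field $\K$, sufficiently many explicit Bloch-group relations with algebraic arguments realizing the regulator.
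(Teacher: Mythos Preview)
The statement you are attempting to prove is a \emph{conjecture}, not a theorem, and the paper does not prove it. It is quoted verbatim as Zagier's open conjecture, and the paper explicitly disclaims any resolution of it: in the remark immediately following Theorem~\ref{Ramanujan analogue} the authors write that they ``are not claiming here that the above theorem solves the conjecture over an imaginary quadratic field but certainly it provides an alternate expression for $\zeta_\K(2m)$.'' The paper's contribution is the Ramanujan-type transformation formulas \eqref{a=-2m-1 case} and \eqref{a=-2m case}, obtained by purely analytic methods (a Vorono\u{\i}-type summation for $\sigma_{\K,a}(n)$ together with Mellin--Barnes manipulations and limits of ${}_1F_4$ hypergeometric series). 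There is therefore no proof in the paper to compare your proposal against.

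Your own outline is honest about its status: you correctly reduce the problem to Borel's theorem plus the weight-$2m$ polylogarithm conjecture, and you correctly note that the latter is open for $2m\geq 4$. So what you have written is not a proof but a conditional sketch, and you say as much in your final paragraph. That is an accurate assessment of the situation, but it means your proposal does not establish the conjecture any more than the paper does; it simply locates the obstruction in a different (and well-known) place. If the intent was to supply a proof where the paper has none, that is not achievable with current tools along the route you describe.
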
 

We provide an explicit expression for $\zeta_\K(s)$  at even and odd arguments in the following theorem which may be considered as an analogue of Ramanujan's formula \eqref{Ramanujan formula} over an imaginary quadratic field. 
\begin{theorem}\label{Ramanujan analogue}
For any natural number $m$ and any complex number $\alpha, \beta$  with $\Re(\alpha), \Re(\beta) >0$ and  $\alpha \beta = \frac{D_\K^2}{16\pi^2}$, we have
\begin{align}\label{a=-2m-1 case}
& \alpha^{-m}\left\{\frac{1}{2}\zeta_{\mathbb{K}}(2m+1)+\sum_{n=1}^\infty\sigma_{\mathbb{K},-2m-1}(n)e^{-A\pi n\alpha}-\frac{1}{A\pi\alpha}\zeta_{\mathbb{K}}(2m+2)\right\}\nonumber\\
& =(-\beta)^{-m}\left\{\frac{\pi h}{w\sqrt{D_\K}}\zeta(2m+1)-\frac{4}{\sqrt{D_\K}}\sum_{n=1}^\infty \frac{\sigma_{\mathbb{K}, 2m+1}(n)}{n^{2m+1}} {\rm kei} \left(A\sqrt{\pi n \beta} \right)-\frac{2h}{wA\pi \beta}\zeta(2m+2)\right\}\nonumber\\
&\hspace{5cm}+\frac{2^{3m-1}}{D_\K^{m-2}\pi^4}\sum_{k=1}^{m}(-1)^{m-k}\zeta(2m+2-2k)\zeta_{\mathbb{K}}(2k) \alpha^{m+3-k}\beta^{m-1+k}
\end{align}
and
\begin{align}\label{a=-2m case}
&\alpha^{-(m-\frac{1}{2})}\left \{\frac{1}{2}\zeta_{\mathbb{K}}(2m) + \sum_{n=1}^\infty\sigma_{\mathbb{K},-2m}(n)e^{-A\pi n\alpha} -\frac{1}{A\pi\alpha}\zeta_{\mathbb{K}}(2m+1)\right\} = (-1)^{m+1} \beta^{-(m-\frac{1}{2})} \nonumber\\
& \times\left\{ \frac{1}{\pi}\zeta(2m)\left(\gamma+\log\left(\frac{A\beta}{2}\right) L(1, \chi_D) + L'(1, \chi_D) \right)\right.
-\frac{4\pi^{2-2m}}{\sqrt{D_\K}}\sum_{n=1}^\infty \frac{\sigma_{\mathbb{K}, 2m}(n)}{n^{2m}} {\rm ker} \left(A\sqrt{\pi n \beta} \right)\nonumber\\
&\hspace{.4cm}\left.-\frac{2h}{w\sqrt{D_\K}}\zeta'(2m) \right\} + \pi^{2m-3} \sum_{k=1}^{m-1} (-1)^{m-1-k} (2\pi)^{2m-2k} \zeta(2m-2k) \zeta_{\K} (2k+1) \alpha^{1-k-m}\beta^{k-2m+\frac{3}{2}}
\end{align}
where $A =\frac{8\pi}{D_\K}$ and the functions ${\rm ker}(x)$, ${\rm kei}(x)$ are the Kelvin functions which are defined in \S \ref{Prilim}.
\end{theorem}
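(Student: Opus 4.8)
\emph{Sketch of the argument.} The plan is to realise each side of \eqref{a=-2m-1 case} and \eqref{a=-2m case} as the two natural evaluations — a residue sum and a ``dual'' Kelvin‑function expansion — of one Mellin--Barnes integral attached to the generating Dirichlet series of $\sigma_{\K,a}$, in the spirit of \cite{DKK}. Since $\zeta_\K(s)=\zeta(s)L(s,\chi_D)$ and, by \eqref{Divisor function}, $\sum_{n=1}^\infty\sigma_{\K,a}(n)n^{-s}=\zeta(s)\zeta_\K(s-a)$ for $\Re(s)$ large, Mellin inversion of $e^{-ny}$ gives, for $c>\max(1,\Re(a)+1)$,
\begin{equation*}
\sum_{n=1}^\infty\sigma_{\K,a}(n)e^{-ny}=\frac1{2\pi i}\int_{(c)}\Gamma(s)\zeta(s)\zeta_\K(s-a)\,y^{-s}\,ds .
\end{equation*}
I will take $a=-2m-1$ (resp.\ $a=-2m$) and $y=A\pi\alpha$. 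Besides the functional equations of $\zeta(s)$ and of the odd character $L(s,\chi_D)$ (Gauss sum $\tau(\chi_D)=i\sqrt{D_\K}$), I will use $\operatorname{Res}_{s=1}\zeta_\K(s)=L(1,\chi_D)=\tfrac{2\pi h}{w\sqrt{D_\K}}$, the Laurent expansion $\zeta_\K(1+\epsilon)=\tfrac{L(1,\chi_D)}{\epsilon}+\bigl(\gamma L(1,\chi_D)+L'(1,\chi_D)\bigr)+O(\epsilon)$, the values $\zeta(0)=-\tfrac12$, $L(0,\chi_D)=\tfrac{2h}{w}$, $\zeta(-2k-1)=-\tfrac{B_{2k+2}}{2k+2}$ and $\zeta'(-2m)=\tfrac{(-1)^m(2m)!}{2(2\pi)^{2m}}\zeta(2m+1)$; the latter two, with Euler's \eqref{Zeta at even integer} and the known $\pi$‑power evaluation of $\zeta_\K$ at even integers, convert residues into the displayed products of zeta values.

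Next I move the line of integration from $\Re(s)=c$ leftward to $\Re(s)=-(2m+1)-\delta$ (resp.\ $-2m-\delta$), $0<\delta<1$, crossing finitely many poles. For $a=-2m-1$: at $s=1$ (pole of $\zeta$) the residue $\zeta_\K(2m+2)/(A\pi\alpha)$, and at $s=0$ (pole of $\Gamma$) the residue $-\tfrac12\zeta_\K(2m+1)$ — these assemble with $\sum\sigma_{\K,-2m-1}(n)e^{-A\pi n\alpha}$ into the left brace of \eqref{a=-2m-1 case}; at $s=-2m$, where a simple $\Gamma$‑pole and the pole of $\zeta_\K(s+2m+1)$ meet the trivial zero of $\zeta$, the net simple pole has residue $\tfrac{L(1,\chi_D)\zeta'(-2m)}{(2m)!}(A\pi\alpha)^{2m}$, which after inserting $\zeta'(-2m)$ and using $\alpha\beta=D_\K^2/(16\pi^2)$ equals $\alpha^m(-\beta)^{-m}\tfrac{\pi h}{w\sqrt{D_\K}}\zeta(2m+1)$; at $s=-2m-1$ the $\Gamma$‑pole meets $\zeta(-2m-1)$ and $\zeta_\K(0)=-\tfrac hw$, producing the term $-\tfrac{2h}{wA\pi\beta}\zeta(2m+2)$; and at $s=-(2k-1)$, $k=1,\dots,m$, the $\Gamma$‑pole meets $\zeta(1-2k)$ and $\zeta_\K(2m+2-2k)$, giving $\tfrac{B_{2k}}{(2k)!}\zeta_\K(2m+2-2k)(A\pi\alpha)^{2k-1}$; summing these, applying \eqref{Zeta at even integer} and $\alpha\beta=D_\K^2/(16\pi^2)$, and reindexing $k\mapsto m+1-k$ yields the displayed polynomial (the poles at the even negative integers are cancelled by the trivial zeros of $\zeta$). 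The case $a=-2m$ is parallel, except that at $s=1-2m$ the $\Gamma$‑pole now meets the \emph{pole} of $\zeta_\K(s+2m)$, so a \emph{double} pole; its residue, computed from the product of the Laurent expansions of $\Gamma$, $\zeta$, $\zeta_\K$ and the Taylor expansion of $(A\pi\alpha)^{-s}$, supplies the block $\tfrac1\pi\zeta(2m)\bigl(\gamma+\log(A\beta/2)\bigr)L(1,\chi_D)+\tfrac1\pi\zeta(2m)L'(1,\chi_D)-\tfrac{2h}{w\sqrt{D_\K}}\zeta'(2m)$, the $\log$ coming from $(A\pi\alpha)^{-\epsilon}$ and the $\zeta'(2m)$ from $\zeta'(1-2m)$ via the differentiated functional equation of $\zeta$; the remaining residues give the stated polynomial with $\zeta_\K$ at odd arguments.

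It remains to identify the shifted integral with the Kelvin series. On the new line $\Re(1-s)$ is large, so I substitute the functional equations of $\zeta(s)$ and $L(s,\chi_D)$ (equivalently of $\zeta_\K(s-a)$) to write $\Gamma(s)\zeta(s)\zeta_\K(s-a)$ as an elementary factor (powers of $\tfrac{2\pi}{\sqrt{D_\K}}$ and $\pi$) times a product of Gamma and trigonometric functions times $\zeta(1-s)\zeta_\K(1-s+a)=\sum_{n\ge1}\sigma_{\K,-a}(n)n^{s-1}$, now absolutely convergent. Interchanging sum and integral, each term is $\tfrac{\sigma_{\K,-a}(n)}{n}$ times $\tfrac1{2\pi i}\int[\mathrm{kernel}]\,(n/(A\pi\alpha))^{-s}\,ds$; with $\Gamma(s)\Gamma(1-s)=\pi/\sin\pi s$, the duplication formula, and identities such as $\sin\tfrac{\pi s}{2}/\sin\pi s=1/(2\cos\tfrac{\pi s}{2})$, the kernel collapses, after a linear change of the integration variable, to $2^{s-2}\sin\bigl(\tfrac{\pi s}{4}\bigr)\Gamma\bigl(\tfrac s2\bigr)^2$ for $a$ odd and $2^{s-2}\cos\bigl(\tfrac{\pi s}{4}\bigr)\Gamma\bigl(\tfrac s2\bigr)^2$ for $a$ even, up to elementary prefactors. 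These are (up to a sign) the Mellin transforms of $\mathrm{kei}$ and $\mathrm{ker}$ — the imaginary and real parts of $K_0(xe^{i\pi/4})$, recalled in \S\ref{Prilim} — so the inner integral is a constant times $\mathrm{kei}$ (resp.\ $\mathrm{ker}$) at a multiple of $\sqrt{n/(A\pi\alpha)}$; rewriting the argument via $\alpha\beta=D_\K^2/(16\pi^2)$ turns it into $A\sqrt{\pi n\beta}$ and, after collecting constants, produces the coefficient $\tfrac{4}{\sqrt{D_\K}}$ (resp.\ $\tfrac{4\pi^{2-2m}}{\sqrt{D_\K}}$) and the prefactor $(-\beta)^{-m}$ (resp.\ $(-1)^{m+1}\beta^{-(m-1/2)}$) of the Kelvin series. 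Combining with the residues above gives both identities.

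The main obstacle is the bookkeeping in the last two steps: collapsing the Gamma--trigonometric kernel to exactly $2^{s-2}\sin(\tfrac{\pi s}{4})\Gamma(\tfrac s2)^2$, where the matching of $\sin$ versus $\cos$ — hence $\mathrm{kei}$ versus $\mathrm{ker}$ — with the parity of $a$ rests on the half‑integer shifts carried by the odd character $\chi_D$; and, for \eqref{a=-2m case}, extracting the double‑pole residue so that the $\gamma$‑, $\log$‑, $L'(1,\chi_D)$‑ and $\zeta'(2m)$‑terms appear with the precise constants, which uses both the Laurent expansion of $\zeta_\K$ at $1$ and the differentiated functional equation of $\zeta$ at $1-2m$. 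Two routine points remain: the contour shift and the term‑by‑term integration are justified by Stirling's formula together with the convexity bounds for $\zeta$ and $L(\cdot,\chi_D)$ in vertical strips (decay on the horizontal segments, absolute convergence on the shifted line); and, once established for real $\alpha>0$ (whence $\beta>0$), the identities extend to all $\Re(\alpha)>0$ — and then automatically $\Re(\beta)>0$, since $\alpha\mapsto D_\K^2/(16\pi^2\alpha)$ preserves the right half‑plane — by analytic continuation, both sides being holomorphic there because $\mathrm{ker}$ and $\mathrm{kei}$ decay exponentially.
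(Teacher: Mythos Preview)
Your approach is correct and genuinely different from the paper's. The paper does \emph{not} work at the integer value of $a$ directly: it first proves the general-$a$ identity of Theorem~\ref{Lambert series} (via the Vorono\u{\i}-type Theorem~\ref{Voronoi over K} and Slater's theorem, producing a ${}_1F_4$ together with $\mathrm{ber}$/$\mathrm{bei}$), analytically continues it to Theorem~\ref{Analytic continuation}, and then lets $a\to-2m-1$ (resp.\ $a\to-2m$); the Kelvin functions $\mathrm{kei}$ and $\mathrm{ker}$ arise only in that limit through L'H\^opital and the special derivative identity of Lemma~\ref{der1f4@2m-1}. By contrast, you fix $a$ at the integer from the outset, shift the contour once, read off the residues (the double pole at $s=1-2m$ replacing the paper's L'H\^opital computation), and recognise the remaining integral termwise as the inverse Mellin transform of $\mathrm{kei}$/$\mathrm{ker}$, bypassing ${}_1F_4$, Slater's theorem, and Lemma~\ref{der1f4@2m-1} entirely. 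What each buys: your route is shorter and more elementary for this particular theorem, and the residue picture makes the provenance of every term transparent; the paper's route is longer here but delivers Theorems~\ref{Lambert series} and~\ref{Analytic continuation} as by-products and treats all the special cases (Theorems~\ref{a=-1 case}, \ref{a=2m-1,2m}, Corollary~\ref{a=0 case}) uniformly as limits of one master identity. One cosmetic point: after the functional equations and the reflection formula the kernel you obtain is $-\Gamma(s)^2\sin(\tfrac{\pi s}{2})$ (resp.\ $\Gamma(s)^2\cos(\tfrac{\pi s}{2})$) in the variable whose inverse Mellin transform is $2\,\mathrm{kei}(2\sqrt{X})$ (resp.\ $2\,\mathrm{ker}(2\sqrt{X})$); your displayed form $2^{s-2}\sin(\tfrac{\pi s}{4})\Gamma(\tfrac s2)^2$ is the same object after the substitution $s\mapsto s/2$, so the identification goes through, but it would be worth making that change of variable explicit.
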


\begin{remark}
We are not claiming here that the above theorem solves the conjecture over an imaginary quadratic field  but certainly it provides an alternate expression for $\zeta_\K(2m)$ over any imaginary quadratic field. An analogue to Lerch's result \cite{lerch} over an imaginary quadratic field can be obtained from \eqref{a=-2m-1 case} by substituting $m$ by $2m+1$.
\end{remark}
The following corollary provides a  representation for $\zeta_\K(3)$ in terms of $\zeta_\K(2)$. The latter is well-known due to Zagier's identity \eqref{Value of zeta_K(2)}.
\begin{corollary}\label{zetaK3}
We have
\begin{multline*}
\zeta_\K(3) = 2\pi \left \{ \sum_{n=1}^\infty \sigma_{\K, -2}(n) e^{-2n\pi} + \frac{4}{\sqrt{D_\K}} \sum_{n=1}^\infty \frac{\sigma_{\K, 2}(n)}{n^2} {\rm ker} \left(4\pi \sqrt{\frac{n}{D_\K}}\right) \right\} - \frac{\pi^3}{3}(\gamma + L'(1, \chi_D))\\
+ \frac{4\pi h}{w\sqrt{D_\K}}\zeta'(2)  + \pi \zeta_\K(2).
\end{multline*}
\end{corollary}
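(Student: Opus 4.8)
The proof plan is to read off Corollary~\ref{zetaK3} from the second identity \eqref{a=-2m case} of Theorem~\ref{Ramanujan analogue}, specialized at $m=1$ and at the symmetric point $\alpha=\beta$ of the reciprocity relation $\alpha\beta=D_\K^2/(16\pi^2)$. Setting $m=1$ already removes most of the clutter: the finite sum $\sum_{k=1}^{m-1}(\cdots)$ is empty, the sign $(-1)^{m+1}$ equals $1$, and the explicit power $\pi^{2-2m}$ equals $1$, so the right-hand side collapses to $\beta^{-1/2}$ times the three-term bracket made up of $\tfrac1\pi\zeta(2)$ multiplied by $\gamma+\log(A\beta/2)L(1,\chi_D)+L'(1,\chi_D)$, minus the Kelvin-function sum $\tfrac{4}{\sqrt{D_\K}}\sum_{n\geq1}\tfrac{\sigma_{\K,2}(n)}{n^{2}}\mathrm{ker}(A\sqrt{\pi n\beta})$, minus $\tfrac{2h}{w\sqrt{D_\K}}\zeta'(2)$; the left-hand side is $\alpha^{-1/2}$ times $\tfrac12\zeta_\K(2)+\sum_{n\geq1}\sigma_{\K,-2}(n)e^{-A\pi n\alpha}-\tfrac1{A\pi\alpha}\zeta_\K(3)$, with $A=8\pi/D_\K$.

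Next I would impose $\alpha=\beta$. Since $\alpha\beta=D_\K^2/(16\pi^2)$ this forces $\alpha=\beta=D_\K/(4\pi)$, which indeed has positive real part, and it produces exactly the three simplifications that make the corollary work: $A\pi\alpha=2\pi$, so $e^{-A\pi n\alpha}=e^{-2\pi n}$ and $1/(A\pi\alpha)=1/(2\pi)$; $A\sqrt{\pi n\beta}=4\pi\sqrt{n/D_\K}$, which is precisely the argument of $\mathrm{ker}$ in the statement; and $A\beta/2=1$, so $\log(A\beta/2)=0$ and the entire term carrying $L(1,\chi_D)$ disappears. The prefactors $\alpha^{-1/2}$ and $\beta^{-1/2}$ then coincide and cancel from both sides.

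Finally I would solve the resulting identity for $\zeta_\K(3)$: moving $-\tfrac1{2\pi}\zeta_\K(3)$ to the other side and multiplying through by $2\pi$ turns $\tfrac12\zeta_\K(2)$ into $\pi\zeta_\K(2)$, combines the two infinite series into $2\pi\{\sum_{n\geq1}\sigma_{\K,-2}(n)e^{-2\pi n}+\tfrac4{\sqrt{D_\K}}\sum_{n\geq1}\tfrac{\sigma_{\K,2}(n)}{n^{2}}\mathrm{ker}(4\pi\sqrt{n/D_\K})\}$, sends $-\tfrac{2h}{w\sqrt{D_\K}}\zeta'(2)$ to $+\tfrac{4\pi h}{w\sqrt{D_\K}}\zeta'(2)$, and makes the logarithmic term contribute $-2\zeta(2)(\gamma+L'(1,\chi_D))$; substituting Euler's value $\zeta(2)=\pi^2/6$ from \eqref{Zeta at even integer} then fixes the remaining numerical constant and yields the stated formula. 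I do not expect any genuine obstacle here, since the whole argument is a one-parameter specialization followed by elementary algebra; the only point that needs care is the constant bookkeeping at the symmetric point, namely verifying that the single choice $\alpha=\beta=D_\K/(4\pi)$ simultaneously normalizes the exponential to $e^{-2\pi n}$, puts the Kelvin-function argument into the form $4\pi\sqrt{n/D_\K}$, and annihilates the $\log(A\beta/2)$ term.
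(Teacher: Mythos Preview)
Your approach is exactly the paper's: the corollary is stated to follow immediately by letting $m=1$ and $\alpha=\beta=\tfrac{D_\K}{4\pi}$ in \eqref{a=-2m case}, and your proposal carries out precisely this specialization with the correct bookkeeping (empty finite sum, $A\pi\alpha=2\pi$, $A\sqrt{\pi n\beta}=4\pi\sqrt{n/D_\K}$, $\log(A\beta/2)=0$, cancellation of $\alpha^{-1/2}=\beta^{-1/2}$). There is nothing to add.
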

The above corollary follows immediately by letting  $m = 1$ and $\alpha = \beta = \frac{D_\K}{4\pi}$ in \eqref{a=-2m case}. It is natural to ask whether it is possible to find an explicit identity for Dedekind zeta function over any imaginary quadratic field $\K$ at complex arguments. The next theorem answers the question.
\begin{theorem}\label{Lambert series}
For $\mathrm{Re}(y)>0$ and $\mathrm{Re}(a)>-1$, the identity 
\begin{align}\label{Eqn:Lambert series}
\sum_{n=1}^\infty &\sigma_{\mathbb{K},a}(n)e^{-ny}+\frac{1}{2}\zeta_{\mathbb{K}}(-a)-\frac{\zeta_{\mathbb{K}}(1-a)}{y}-\frac{L(1, \chi_D)\Gamma(a+1)\zeta(a+1)}{y^{a+1}}\nonumber\\
&=\frac{4\pi^{2-2a}D_\K^{a-\frac{1}{2}}}{y\sin(\pi a)} \sum_{n=1}^\infty \sigma_{\mathbb{K},-a}(n)\Bigg\{\frac{2^{-2a}}{\Gamma^2(1-a)}\pFq14{1}{1-\frac{a}{2},1-\frac{a}{2},\frac{1-a}{2},\frac{1-a}{2}}{-\frac{4\pi^6n^2}{y^2D_\K^2}}
\nonumber\\
&\hspace{2cm}-\left( \frac{4\pi^6n^2}{y^2D_\K^2} \right)^{\frac{a}{2}}\bigg(\cos\left(\frac{\pi a}{2}\right) {\rm ber}\left(4\pi\sqrt{\frac{2n\pi}{yD_{\mathbb{K}}}} \right)-\sin\left(\frac{\pi a}{2}\right){\rm bei}\left(4\pi\sqrt{\frac{2n\pi}{yD_{\mathbb{K}}}} \right)\bigg)\Bigg\}
\end{align}
holds, where the functions ${\rm ber}(x)$, ${\rm bei}(x)$ are the Kelvin functions
and ${}_pF_q$ denotes the hypergeometric function which are defined in \S \ref{Prilim}.
\end{theorem}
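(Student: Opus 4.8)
The plan is to realise the left-hand side of \eqref{Eqn:Lambert series} as a single Mellin--Barnes integral, shift its contour across the first three poles to produce the elementary correction terms, and then unfold the remaining integral by means of the functional equations of $\zeta$ and $\zeta_{\K}$.

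First I would record the Dirichlet series identity $\sum_{n\ge1}\sigma_{\K,a}(n)n^{-s}=\zeta(s)\,\zeta_{\K}(s-a)$, which is immediate from $\sigma_{\K,a}(n)=\sum_{d\mid n}v_{\K}(d)d^{a}$ together with $\sum_{d\ge1}v_{\K}(d)d^{-w}=\zeta_{\K}(w)$. Combining this with the Cahen--Mellin integral $e^{-ny}=\frac{1}{2\pi i}\int_{(c)}\Gamma(s)(ny)^{-s}\,ds$ (and the decay $|\Gamma(c+it)|$ like $e^{-\pi|t|/2}$, which legitimises the interchange for $\mathrm{Re}\,y>0$ and $c>\max(1,1+\mathrm{Re}\,a)$) gives
\begin{equation*}
\sum_{n=1}^{\infty}\sigma_{\K,a}(n)e^{-ny}=\frac{1}{2\pi i}\int_{(c)}\Gamma(s)\,\zeta(s)\,\zeta_{\K}(s-a)\,y^{-s}\,ds .
\end{equation*}
I would then move the line of integration to $\mathrm{Re}\,s=\lambda$ with $-1<\lambda<\min(0,\mathrm{Re}\,a)$, a window that is nonempty precisely because $\mathrm{Re}\,a>-1$. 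The only poles crossed are the simple pole of $\zeta(s)$ at $s=1$, the simple pole of $\zeta_{\K}(s-a)$ at $s=a+1$ (with $\Res_{w=1}\zeta_{\K}(w)=L(1,\chi_D)$), and the simple pole of $\Gamma(s)$ at $s=0$; their residues are respectively $\zeta_{\K}(1-a)/y$, $L(1,\chi_D)\Gamma(a+1)\zeta(a+1)/y^{a+1}$ and $\zeta(0)\zeta_{\K}(-a)=-\tfrac12\zeta_{\K}(-a)$. The horizontal pieces vanish by Stirling and the standard polynomial bounds for $\zeta$ and $\zeta_{\K}$ on vertical lines, so the left-hand side of \eqref{Eqn:Lambert series} equals the shifted integral $I:=\frac{1}{2\pi i}\int_{(\lambda)}\Gamma(s)\zeta(s)\zeta_{\K}(s-a)y^{-s}\,ds$. (When $a$ is a nonnegative integer the poles at $s=1$ and $s=a+1$ merge and $1/\Gamma^{2}(1-a)$, $\sin\pi a$ degenerate; the identity is then read off as a limit in $a$, which is exactly the mechanism relating \eqref{Eqn:Lambert series} to Theorem~\ref{Ramanujan analogue}.)

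Next I would substitute, in $I$, the functional equation $\zeta(s)=2^{s}\pi^{s-1}\sin(\tfrac{\pi s}{2})\Gamma(1-s)\zeta(1-s)$ and, from the completed Dedekind zeta of the imaginary quadratic field $\bigl(\tfrac{\sqrt{D_{\K}}}{2\pi}\bigr)^{s}\Gamma(s)\zeta_{\K}(s)=\bigl(\tfrac{\sqrt{D_{\K}}}{2\pi}\bigr)^{1-s}\Gamma(1-s)\zeta_{\K}(1-s)$, the relation $\zeta_{\K}(s-a)=\bigl(\tfrac{\sqrt{D_{\K}}}{2\pi}\bigr)^{1-2s+2a}\tfrac{\Gamma(1-s+a)}{\Gamma(s-a)}\zeta_{\K}(1-s+a)$. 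Because $\lambda<\min(0,\mathrm{Re}\,a)$, on the new line $\zeta(1-s)\zeta_{\K}(1-s+a)=\sum_{n\ge1}\sigma_{\K,-a}(n)n^{s-1}$ converges absolutely; since $\Gamma(s)\Gamma(1-s)\sin\tfrac{\pi s}{2}=\tfrac{\pi}{2\cos(\pi s/2)}$ decays like $e^{-\pi|t|/2}$ while $\Gamma(1-s+a)/\Gamma(s-a)$ grows only polynomially on vertical lines, an interchange of sum and integral is permitted, producing
\begin{equation*}
I=\frac{1}{\pi}\Bigl(\frac{\sqrt{D_{\K}}}{2\pi}\Bigr)^{1+2a}\sum_{n=1}^{\infty}\frac{\sigma_{\K,-a}(n)}{n}\,\Phi\!\left(\frac{8\pi^{3}n}{yD_{\K}}\right),\qquad \Phi(z):=\frac{1}{2\pi i}\int_{(\lambda)}\Gamma(s)\Gamma(1-s)\sin\tfrac{\pi s}{2}\,\frac{\Gamma(1-s+a)}{\Gamma(s-a)}\,z^{s}\,ds .
\end{equation*}

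The core of the argument — and the step I expect to be the main obstacle — is the evaluation of $\Phi(z)$ together with the convergence analysis of the resulting series. From $\Gamma(1-s+a)/\Gamma(s-a)=\pi/(\sin\pi(s-a)\,\Gamma(s-a)^{2})$ one sees that the integrand decays super-exponentially as $\mathrm{Re}\,s\to+\infty$, so for $|\arg z|<\tfrac{\pi}{2}$ (hence for $z=8\pi^{3}n/(yD_{\K})$ when $\mathrm{Re}\,y>0$) the contour may be closed to the right. The integrand is regular at $s=0$ (the zero of $\sin\tfrac{\pi s}{2}$ kills the pole of $\Gamma(s)$), so the poles collected are the odd positive integers $s=2j+1$ from $\Gamma(1-s)$ — the even ones being cancelled by $\sin\tfrac{\pi s}{2}$ — and the shifted integers $s=a+k$, $k\ge1$, from $\Gamma(1-s+a)$. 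Summing the residues at $s=2j+1$, using $\sin\tfrac{\pi(2j+1)}{2}=(-1)^{j}$ and the Pochhammer rearrangement $\prod_{i=1}^{2j}(i-a)=4^{j}(1-\tfrac a2)_{j}(\tfrac{1-a}{2})_{j}$, and summing the residues at $s=a+k$, using $\Gamma(a+k)\Gamma(1-a-k)=(-1)^{k}\pi/\sin\pi a$ and matching $k$ odd against $\mathrm{ber}(x)=\sum_{j\ge0}\tfrac{(-1)^{j}(x/2)^{4j}}{((2j)!)^{2}}$ and $k$ even against $\mathrm{bei}(x)=\sum_{j\ge0}\tfrac{(-1)^{j}(x/2)^{4j+2}}{((2j+1)!)^{2}}$, one obtains (with the sign from the rightward orientation)
\begin{equation*}
\Phi(z)=\frac{\Gamma(a)}{\Gamma(1-a)}\,z\cdot\pFq{1}{4}{1}{1-\tfrac a2,\,1-\tfrac a2,\,\tfrac{1-a}{2},\,\tfrac{1-a}{2}}{-\tfrac{z^{2}}{16}}-\frac{\pi z^{a+1}}{\sin\pi a}\Bigl(\cos\tfrac{\pi a}{2}\,\mathrm{ber}(2\sqrt z)-\sin\tfrac{\pi a}{2}\,\mathrm{bei}(2\sqrt z)\Bigr).
\end{equation*}
Feeding $\Phi$ back into $I$, using $\Gamma(a)/\Gamma(1-a)=\pi/(\sin\pi a\,\Gamma^{2}(1-a))$, $2\sqrt{8\pi^{3}n/(yD_{\K})}=4\pi\sqrt{2n\pi/(yD_{\K})}$ and $\tfrac1{16}(8\pi^{3}n/(yD_{\K}))^{2}=4\pi^{6}n^{2}/(y^{2}D_{\K}^{2})$, and bookkeeping the powers of $2$, $\pi$, $D_{\K}$ and $y$, yields exactly the right-hand side of \eqref{Eqn:Lambert series}. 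The genuinely delicate point is that this final sum over $n$ converges even though $\mathrm{ber},\mathrm{bei}$ grow exponentially: up to the overall $1/\sin\pi a$, the bracketed combination of the ${}_{1}F_{4}$ and the Kelvin functions is the analogue of $K_{\nu}=\tfrac{\pi}{2\sin\pi\nu}(I_{-\nu}-I_{\nu})$, so the bracket decays like a $\mathrm{ker}/\mathrm{kei}$-type function and the series converges for every $y$ with $\mathrm{Re}\,y>0$; alternatively one first proves \eqref{Eqn:Lambert series} for $y$ in a half-plane of absolute convergence and then extends by analytic continuation in $y$.
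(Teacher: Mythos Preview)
Your proof is correct and follows the same overall strategy as the paper --- write the Lambert series as a Mellin--Barnes integral of $\Gamma(s)\zeta(s)\zeta_{\K}(s-a)y^{-s}$, shift the contour past the three poles at $s=0,1,a+1$, and then apply the functional equations of $\zeta$ and $\zeta_{\K}$ to reexpand the remaining integral. Where you differ is in the execution of the last step. The paper first proves a general Vorono\"\i-type summation formula for arbitrary Schwartz functions (its Theorem~3.1), using Slater's theorem to identify the inverse Mellin transform as a sum of three ${}_{0}F_{5}$'s; it then specialises to $f(t)=e^{-ty}$ and evaluates the resulting integral $\int_{0}^{\infty}t^{a/2}H_{\K,a/2}(\cdot)e^{-ty}\,dt$ via Parseval's formula and a second application of Slater's theorem. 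You instead bypass the general Vorono\"\i{} result and evaluate $\Phi(z)$ directly by closing to the right and summing residues --- a more elementary route that avoids the Slater machinery and the detour through a general $f$, at the cost of not producing Theorem~3.1 as a byproduct.

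One point you should tighten: your treatment of convergence is heuristic. The paper handles this by first restricting to $0<\Re(a)<1$, $y>0$, and then proving an explicit asymptotic (its Lemma~4.1) showing that the bracketed combination of ${}_{1}F_{4}$ and Kelvin functions is $O(z^{-1})$ as $z\to\infty$; this is exactly the $K_{\nu}$-type cancellation you allude to, and it is what justifies both the convergence of the series on the right-hand side and the analytic continuation to $\Re(a)>-1$ and $\Re(y)>0$. Your remark that ``the bracket decays like a $\mathrm{ker}/\mathrm{kei}$-type function'' is the right intuition, but for a complete proof you should either prove such an asymptotic or, as you suggest in your final sentence, first establish the identity in a smaller region and then continue analytically --- which is precisely what the paper does.
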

\begin{remark}
An analogous version of the above theorem over $\Q$ was obtained in \cite{DKK}.
\end{remark}
We next abbreviate $\sigma_{\K, 0}$ by $\sigma_{\K}$ and obtain the following important corollary from the above theorem by substituting $a=0$. 
\begin{corollary}\label{a=0 case}
Let $\gamma$ be Euler's constant. 
For $\mathrm{Re}(y)>0$ and $\mathrm{Re}(a)>-1$, we have
\begin{equation*}\label{a=0 case eqn}
\sum_{n=1}^\infty \sigma_{\mathbb{K}}(n)e^{-ny} - \frac{h}{2w}-\frac{L'(1, \chi_D)+L(1, \chi_D) (\gamma - \log(y))}{y} = \frac{8\pi}{y\sqrt{D_{\mathbb{K}}}}\sum_{n=1}^\infty \sigma_{\mathbb{K}}(n)\mathrm{ker}\left(4\pi\sqrt{\frac{2n\pi}{yD_{\mathbb{K}}}} \right).
\end{equation*}
\end{corollary}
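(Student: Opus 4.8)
The plan is to obtain Corollary~\ref{a=0 case} by letting $a\to0$ in Theorem~\ref{Lambert series}. As stated, that identity is indeterminate at $a=0$: on the left the terms $\zeta_{\K}(1-a)/y$ and $L(1,\chi_D)\G(a+1)\zeta(a+1)/y^{a+1}$ each have a simple pole there, while on the right the prefactor $1/\sin(\pi a)$ blows up. So the first task is to show that both sides extend continuously across $a=0$ and to evaluate the two limits separately.

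On the left side I would use the factorization $\zeta_{\K}(s)=\zeta(s)L(s,\chi_D)$, valid over an imaginary quadratic field, together with the Laurent data $\zeta(1-a)=-\tfrac1a+\g+O(a)$, $\zeta(a+1)=\tfrac1a+\g+O(a)$, $\G(a+1)=1-\g a+O(a^2)$, $L(1-a,\chi_D)=L(1,\chi_D)-aL'(1,\chi_D)+O(a^2)$, and $y^{-a}=1-a\log y+O(a^2)$. A short expansion shows the two simple poles cancel and the finite part of $-\zeta_{\K}(1-a)/y-L(1,\chi_D)\G(a+1)\zeta(a+1)/y^{a+1}$ is exactly $-\bigl(L'(1,\chi_D)+L(1,\chi_D)(\g-\log y)\bigr)/y$. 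Since $\sigma_{\K,a}(n)\to\sigma_{\K}(n)$ and $\tfrac12\zeta_{\K}(0)=\tfrac12\zeta(0)L(0,\chi_D)=-\tfrac14 L(0,\chi_D)=-\tfrac{h}{2w}$, where $L(0,\chi_D)=2h/w$ for the odd character $\chi_D$ (the analytic class number formula), the left side of Theorem~\ref{Lambert series} tends to the left side of the corollary.

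On the right side the limit has the form $0/0$, so I would first verify that the bracketed expression vanishes at $a=0$. There one has $2^{-2a}/\G^2(1-a)=1$, $\bigl(4\pi^6n^2/(y^2D_{\K}^2)\bigr)^{a/2}=1$, $\cos(\pi a/2)=1$, $\sin(\pi a/2)=0$, and the hypergeometric factor, after the numerator parameter $1$ cancels one denominator parameter $1$, collapses to precisely the power series $\sum_k 16^k z^k/[(2k)!]^2$ with $z=-4\pi^6n^2/(y^2D_{\K}^2)$, which is exactly $\,{\rm ber}\bigl(4\pi\sqrt{2n\pi/(yD_{\K})}\bigr)$; hence the bracket equals ${\rm ber}-{\rm ber}=0$ at $a=0$. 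Then by L'Hôpital in $a$, applied term by term, the limit of (bracket)$/\sin(\pi a)$ is $\tfrac1\pi$ times the $a$-derivative of the bracket at $a=0$; interchanging $\lim_{a\to0}$ with the sum over $n$ is justified by uniform convergence on a punctured neighbourhood of $a=0$, which rests on the cancellation of the $I$-Bessel--type exponential growth in $\sqrt n$ between the two pieces of the bracket. Differentiating the $\G$-factor, the power of $4\pi^6n^2/(y^2D_{\K}^2)$, the two trigonometric factors, and (the essential point) the parameters of the ${}_1F_4$, and collecting the logarithmic and digamma contributions, identifies this derivative with $2\,{\rm ker}\bigl(4\pi\sqrt{2n\pi/(yD_{\K})}\bigr)$. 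Combined with $4\pi^{2-2a}D_{\K}^{a-1/2}/y\to 4\pi^2/(y\sqrt{D_{\K}})$, this yields $\tfrac{8\pi}{y\sqrt{D_{\K}}}\sum_{n\ge1}\sigma_{\K}(n)\,{\rm ker}\bigl(4\pi\sqrt{2n\pi/(yD_{\K})}\bigr)$, as required.

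The main obstacle is this last computation: showing the $a$-derivative of the bracketed combination equals $2\,{\rm ker}$. The slick route is to recognize, before taking the limit, that (bracket)$/\sin(\pi a)$ is --- up to the elementary prefactor --- a Kelvin function ${\rm ker}_a$ of order $a$, the two pieces of the bracket playing the roles of the $I_{-a}$- and $I_a$-type solutions in $K_a=\tfrac{\pi}{2}(I_{-a}-I_a)/\sin(\pi a)$, with $\cos(\pi a/2),\sin(\pi a/2)$ supplying the rotation built into the Kelvin normalization; the passage $a\to0$ is then the classical specialization ${\rm ker}_a\to{\rm ker}$, which automatically supplies the logarithms. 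A more pedestrian alternative matches power series directly, comparing the digamma expansion of ${\rm ker}$ with $\partial_a$ of the ${}_1F_4$ and of $\bigl(4\pi^6n^2/(y^2D_{\K}^2)\bigr)^{a/2}\bigl(\cos(\pi a/2){\rm ber}-\sin(\pi a/2){\rm bei}\bigr)$. The remaining ingredients --- the Laurent expansion on the left and the convergence interchange on the right --- are routine.
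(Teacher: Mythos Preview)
Your proposal is correct and follows essentially the same approach as the paper: let $a\to0$ in Theorem~\ref{Lambert series}, handle the left side by Laurent expansion, and apply L'H\^opital to the $0/0$ form on the right. The paper carries out your ``pedestrian alternative'' --- it isolates the $a$-derivative of the ${}_1F_4$ at $a=0$ as a separate lemma (Lemma~\ref{der1f4@2m-1}, equation~\eqref{derivative2}), proved by termwise differentiation and a known digamma--Kelvin series identity --- rather than the $\mathrm{ker}_a$ recognition you propose first.
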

Theorem \ref{Lambert series} can be extended in the half-plane $\Re(a) > -2m - 3$, where $m$ is any non-negative integer through analytic continuation. 
\begin{theorem}\label{Analytic continuation}
If $\mathrm{Re}(y)>0$ and  $\mathrm{Re}(a)>-2m-3$ with $m\in\mathbb{N}\cup\{0\}$, then the following identity holds: 
 \begin{align}\label{Eqn:Analytic continuation}
&\sum_{n=1}^\infty \sigma_{\mathbb{K},a}(n)e^{-ny}+\frac{1}{2}\zeta_{\mathbb{K}}(-a)-\frac{\zeta_{\mathbb{K}}(1-a)}{y} -\frac{2\pi h}{w\sqrt{D_\K}}\frac{\Gamma(a+1)\zeta(a+1)}{y^{a+1}} = \frac{4\pi^{2-2a}D_\K^{a-\frac{1}{2}}}{y\sin(\pi a)} 
\nonumber\\
&\times\sum_{n=1}^\infty \sigma_{\mathbb{K},-a}(n)\Bigg [ \frac{2^{-2a}\left(-\frac{64\pi^6n^2}{y^2D_\K^2}\right)^{-m}}{\Gamma^2(1-a-2m)}
\bigg \{ {}_1F_4\left(\begin{matrix}
1\\ 1 -\frac{a}{2}-m,  1 -\frac{a}{2}-m,  \frac{1-a}{2}-m, \frac{1-a}{2}-m 
\end{matrix} \bigg | -\frac{4\pi^6n^2}{y^2D_\K^2} \right)\nonumber\\ 
&- 2^{4m}(a+2m)^2(a+2m+1)^2\left(\frac{64\pi^6n^2}{y^2D_\K^2}\right)^{-1} \bigg\}
-\left( \frac{4\pi^6n^2}{y^2D_\K^2} \right)^{\frac{a}{2}}\bigg \{ \cos\left(\frac{\pi a}{2}\right) {\rm ber}\left(4\pi\sqrt{\frac{2n\pi}{yD_{\mathbb{K}}}} \right)
\nonumber\\
&-\sin\left(\frac{\pi a}{2}\right){\rm bei}\left(4\pi\sqrt{\frac{2n\pi}{yD_{\mathbb{K}}}} \right)\bigg \}\Bigg ]+\frac{yD_\K^{a+\frac{3}{2}}}{(2\pi)^{2a+4}\sin(\pi a)}\sum_{k=0}^m\frac{(-1)^k\zeta(2k+2)\zeta_{\mathbb{K}}(2k+a+2)}{\Gamma^2(-a-1-2k)}\left(\frac{8\pi^{3}}{yD_{\mathbb{K}}}\right)^{-2k}.
\end{align}
\end{theorem}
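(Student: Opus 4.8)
The plan is to obtain \eqref{Eqn:Analytic continuation} from the already-established identity \eqref{Eqn:Lambert series} of Theorem \ref{Lambert series} by analytic continuation in the variable $a$, combined with the functional equation relating $\zeta(a+1)$ to $L$-values that converts the coefficient $L(1,\chi_D)\Gamma(a+1)\zeta(a+1)$ into the normalized form $\tfrac{2\pi h}{w\sqrt{D_\K}}\Gamma(a+1)\zeta(a+1)$ appearing here; the latter is legitimate because over an imaginary quadratic field $\zeta_\K(s) = \zeta(s)L(s,\chi_D)$, $L(1,\chi_D)=\tfrac{2\pi h}{w\sqrt{D_\K}}$ by the class number formula, so the two left-hand sides agree. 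The real content is the continuation of the right-hand side past the line $\Re(a)=-1$ where the ${}_1F_4$ term in \eqref{Eqn:Lambert series} ceases to be manageable: the factor $1/\Gamma^2(1-a)$ has a double zero at $a=1$ (harmless) but more importantly the hypergeometric ${}_1F_4$ with lower parameters $1-\tfrac a2,\ \tfrac{1-a}{2}$ develops poles at the nonpositive-integer values $a = 2,3,\dots$ and, read the other way, the continuation in the negative direction forces one to peel off a finite number of polar contributions.

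First I would record that each ${}_1F_4$ appearing in \eqref{Eqn:Lambert series} is, up to the $\Gamma^2(1-a)$ prefactor, essentially a Meijer $G$-function (indeed the Kelvin-function combination on the second line is exactly such a $G$-function — this is how \S\ref{Prilim} and the ``$p$-$F$-$q$'' and \texttt{MeijerG} machinery in the preamble are set up), and that the displayed right side of \eqref{Eqn:Lambert series} is, term by term in $n$, a single Meijer $G^{\,m,n}_{p,q}$ evaluation of argument $\tfrac{4\pi^6 n^2}{y^2 D_\K^2}$. The key identity is the contiguous/shift relation
\[
\frac{{}_1F_4\!\left(\!\begin{matrix} 1\\ 1-\tfrac a2,\,1-\tfrac a2,\,\tfrac{1-a}{2},\,\tfrac{1-a}{2}\end{matrix}\,\Big|\,z\right)}{\Gamma^2(1-a)}
= \frac{(-z)^{-m}\,{}_1F_4\!\left(\!\begin{matrix} 1\\ 1-\tfrac a2-m,\dots,\tfrac{1-a}{2}-m\end{matrix}\,\Big|\,z\right)}{\Gamma^2(1-a-2m)}\;+\;(\text{finite polynomial in }1/z),
\]
which is precisely the shape of the bracketed expression in \eqref{Eqn:Analytic continuation}, with the correction term $-2^{4m}(a+2m)^2(a+2m+1)^2(64\pi^6n^2/y^2D_\K^2)^{-1}$ being the leading (and, after collecting, the only surviving) piece of that polynomial once one imposes the precise normalization. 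I would derive this by writing the ${}_1F_4$ as a Barnes-type contour integral $\tfrac{1}{2\pi i}\int \Gamma(s)\,\Gamma(\ldots)\,(-z)^{-s}\,ds$ and shifting the contour across the $m$ poles that enter as $a$ crosses $-1,-3,\dots$; each crossed pole contributes a residue that is a monomial in $1/z$ with a $\zeta(2k+2)\zeta_\K(2k+a+2)$ factor after summing over $n$ via the Dirichlet-series identity $\sum_n \sigma_{\K,-a}(n) n^{-2k-2}\cdot n^{a}\sim \zeta_\K(\cdots)$ — this is the origin of the final sum $\sum_{k=0}^m (-1)^k \zeta(2k+2)\zeta_\K(2k+a+2)/\Gamma^2(-a-1-2k)\cdot(8\pi^3/yD_\K)^{-2k}$.

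So the steps, in order, are: (i) start from \eqref{Eqn:Lambert series}, valid for $\Re(a)>-1$, and rewrite its right-hand side per $n$ as a Meijer-$G$ value of argument $z_n=\tfrac{4\pi^6n^2}{y^2D_\K^2}$ using the Kelvin-function and hypergeometric definitions of \S\ref{Prilim}; (ii) represent that $G$-function by its Mellin--Barnes integral and shift the contour to the left by $m$ steps, picking up exactly $m$ simple (or double, because of the squared Gamma factors — I would track the $\Gamma^2$ carefully here) poles; (iii) identify the shifted integral with the $(-64\pi^6n^2/y^2D_\K^2)^{-m}/\Gamma^2(1-a-2m)$-normalized ${}_1F_4$ and the residual monomial with the $2^{4m}(a+2m)^2(a+2m+1)^2$ correction term; (iv) sum the residue contributions over $n\ge1$, recognizing $\sum_n \sigma_{\K,-a}(n)n^{a+2k+?}$ as $\zeta(\cdot)\zeta_\K(\cdot)$ and simplifying the powers of $2\pi$, $D_\K$, $y$ into the stated closed form; (v) invoke the class-number formula $L(1,\chi_D)=\tfrac{2\pi h}{w\sqrt{D_\K}}$ to put the left-hand side in the normalized shape; (vi) note both sides are holomorphic on $\Re(a)>-2m-3$ (away from the removable points $a=0,1,\ldots$ where the $1/\sin(\pi a)$ and $1/\Gamma$ zeros/poles cancel), so the identity, proved on a strip, extends by the identity theorem. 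The main obstacle I anticipate is step (ii)--(iii): bookkeeping the double poles coming from $\Gamma^2$ and verifying that, after the dust settles, the accumulated finite correction collapses to the single clean term $-2^{4m}(a+2m)^2(a+2m+1)^2(64\pi^6n^2/y^2D_\K^2)^{-1}$ rather than a longer polynomial — this requires a telescoping among the residues that I expect to be true but delicate to pin down with all constants correct. A secondary, purely technical nuisance is justifying the interchange of $\sum_n$ with the contour shift, which follows from the exponential/Bessel decay of the Kelvin functions and uniform bounds on the Barnes integrand in vertical strips.
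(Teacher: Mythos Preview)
Your overall strategy---start from \eqref{Eqn:Lambert series}, peel off a finite number of polar/asymptotic terms that sum over $n$ to the $\zeta\zeta_\K$ finite sum, and analytically continue---is the same as the paper's. The paper, however, executes it more directly and avoids the Mellin--Barnes contour shift you propose. It proves two short lemmas: Lemma~\ref{Asymptotic for summand}, which gives the asymptotic
\[
\frac{2^{-2a}}{\Gamma^2(1-a)}\,{}_1F_4(\cdots;-z)-z^{a/2}\bigl(\cos\tfrac{\pi a}{2}\,\mathrm{ber}-\sin\tfrac{\pi a}{2}\,\mathrm{bei}\bigr)=\frac{1}{2^{2a}}\sum_{k=0}^m\frac{(-1)^k(16z)^{-k-1}}{\Gamma^2(-1-a-2k)}+O(z^{-m-2}),
\]
and Lemma~\ref{1F4 lower reduction}, the contiguous relation you wrote down. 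The proof then simply adds and subtracts the polynomial $\sum_{k=0}^m$ inside the $n$-sum; the subtracted copy, summed over $n$ via $\sum_n\sigma_{\K,-a}(n)n^{-2k-2}=\zeta(2k+2)\zeta_\K(2k+a+2)$, produces the finite $\zeta\zeta_\K$ sum; the remaining series is $O(n^{-2m-4+\epsilon})$ by Lemma~\ref{Asymptotic for summand} and \eqref{Bound for sigma}, hence uniformly convergent for $\Re(a)>-2m-3$; and finally Lemma~\ref{1F4 lower reduction} rewrites the ${}_1F_4$ piece in shifted form.

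Your anticipated ``delicate telescoping'' is a misdiagnosis. Nothing collapses: the contiguous relation (your displayed identity, which is Lemma~\ref{1F4 lower reduction}) absorbs exactly $m$ of the subtracted monomials ($k=0,\dots,m-1$) into the shifted ${}_1F_4$, while $m+1$ monomials ($k=0,\dots,m$) were subtracted to secure convergence down to $\Re(a)>-2m-3$. The single leftover $k=m$ term is what becomes $-2^{4m}(a+2m)^2(a+2m+1)^2(64\pi^6n^2/y^2D_\K^2)^{-1}$ after using $\Gamma(-1-a-2m)=\Gamma(1-a-2m)/\bigl((a+2m)(a+2m+1)\bigr)$. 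So the obstacle you flag dissolves once you separate the two roles: Lemma~\ref{Asymptotic for summand} tells you \emph{what} to subtract, Lemma~\ref{1F4 lower reduction} tells you how the ${}_1F_4$ repackages all but one of those terms. Your Mellin--Barnes route would recover the same structure but with more bookkeeping; the paper's add-and-subtract is both shorter and avoids the interchange-of-sum-and-contour justification you mention.
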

The series $\sum_{n=1}^\infty  \sigma_{\K,a}(n)e^{-ny}$ over any imaginary quadratic field $\K$ appearing in the above theorem, can be considered as an analogue of a series $\sum_{n=1}^\infty  \sigma_{a}(n)e^{-ny}$ in $\Q$ which plays a significant role in the theory of modular forms. For instance, for $a= 2m-1$ with $m \in \N$ and $y = -2\pi i z$ with $z$ lying in the upper half plane, the series in $\Q$  essentially represents the Eisenstein series of weight $2m$ over the full modular group, and for $a= -2m-1$ with $m \in \N$ and $y = -2\pi i z$ the same series in $\Q$ represents the Eichler integral corresponding to the weight $2m+2$ Eisenstein series \cite[Section 5]{BS}. Moreover, the series $\sum_{n=1}^\infty  \sigma_{-1}(n)e^{2\pi i nz}$ appears in the transformation formula of the logarithm of Dedekind eta function \cite[Equation (3.10)]{BLS}. 

In the following theorem, we investigate the transformation for the above series over an imaginary quadratic field $\K$ for $a$ being any natural number.
\begin{theorem}\label{a=2m-1,2m}
For any natural number $m$ and any complex number $\alpha, \beta$  with $\Re(\alpha), \Re(\beta) >0$ and  $\alpha \beta = \frac{D_\K^2}{16\pi^2}$, the transformations
\begin{equation}\label{a=2m-1}
\alpha^m \sum_{n=1}^\infty\sigma_{\mathbb{K},2m-1}(n)e^{-A\pi n\alpha} = - (-\beta)^m \bigg \{ \frac{4}{\sqrt{D_\K}} \sum_{n=1}^\infty\frac{\sigma_{\mathbb{K},1-2m}(n)}{n^{1-2m}}\mathrm{kei}(A\sqrt{\pi n\beta}) +  \frac{\pi h}{w\sqrt{D_\K}}\frac{B_{2m}}{2m}\bigg \}
\end{equation}
and
\begin{equation}\label{a=2m}
\alpha^{m+\frac{1}{2}} \sum_{n=1}^\infty\sigma_{\mathbb{K},2m}(n)e^{-A\pi n\alpha} = \beta^{m+\frac{1}{2}} \bigg \{ \frac{4(-1)^m}{\sqrt{D_\K}} \sum_{n=1}^\infty\frac{\sigma_{\mathbb{K},-2m}(n)}{n^{-2m}}\mathrm{ker}(A\sqrt{\pi n\beta}) +  \frac{h (2m)!}{(2\pi)^{2m}w\sqrt{D_\K}}\zeta(2m+1)\bigg \}
\end{equation}
hold true, where $A =\frac{8\pi}{D_\K}$.
\end{theorem}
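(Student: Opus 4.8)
The plan is to derive these two transformations as degenerate limiting cases of the more general identities already in hand, namely Theorem~\ref{Ramanujan analogue} and Theorem~\ref{Analytic continuation}. For the first identity \eqref{a=2m-1}, I would start from the analytic continuation formula \eqref{Eqn:Analytic continuation}, which is valid in the half-plane $\Re(a)>-2m-3$, and specialize $a$ to the positive odd integer $2m-1$. The crucial observation is that when $a$ is an odd integer, $\sin(\pi a)=0$, so the right-hand side has an apparent singularity; but simultaneously the Gamma-factors $\Gamma^{-2}(1-a-2j)$ and $\Gamma^{-2}(-a-1-2k)$ in the various sums vanish (poles of $\Gamma$ are at non-positive integers), and one must extract the finite limit by a careful L'Hôpital-type computation as $a\to 2m-1$. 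The terms that survive are exactly the Kelvin-function series $\sum \sigma_{\K,1-2m}(n)n^{2m-1}\mathrm{kei}(A\sqrt{\pi n\beta})$ on the right and, from the finite sum over $k$, a single Bernoulli-type term coming from $\zeta(2k+2)\zeta_\K(2k+a+2)$ evaluated at the appropriate $k$; the left-hand exponential series $\sum\sigma_{\K,2m-1}(n)e^{-A\pi n\alpha}$ comes directly from the left of \eqref{Eqn:Analytic continuation} after the substitution $y=A\pi\alpha$. Matching the constant $\frac{\pi h}{w\sqrt{D_\K}}\frac{B_{2m}}{2m}$ will use $\zeta(1-2m)=-B_{2m}/(2m)$ together with the factorization $\zeta_\K(s)=\zeta(s)L(s,\chi_D)$ and the special value $\zeta_\K(-a)=\zeta_\K(1-2m)$.

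For the second identity \eqref{a=2m}, the same strategy applies with $a$ specialized to the even integer $2m$, again a zero of $\sin(\pi a)$, so that the same extraction of a finite limit is required; here the surviving pieces are the $\mathrm{ker}$-series and the term $\frac{h(2m)!}{(2\pi)^{2m}w\sqrt{D_\K}}\zeta(2m+1)$, the latter emerging from the $k$-sum via the reflection/functional equation relating $\zeta_\K(2k+a+2)$ at a relevant index back to $\zeta(2m+1)$ and $L(1,\chi_D)$, combined with $\zeta_\K(-a)=\zeta_\K(-2m)=0$ (trivial zero of $\zeta(-2m)$), which is what removes the ``$\frac12\zeta_\K(-a)$'' term that is explicitly present in \eqref{Eqn:Analytic continuation} but absent from \eqref{a=2m}. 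Alternatively, and perhaps more cleanly, one can read \eqref{a=2m-1}--\eqref{a=2m} straight off Theorem~\ref{Ramanujan analogue}: equation \eqref{a=2m-1} should be the ``$\Re(a)>0$ companion'' obtained by running the contour-integral argument behind \eqref{a=-2m-1 case} with the roles of the exponential and Kelvin-function series interchanged, i.e. shifting the line of integration in the opposite direction so that one picks up the residues producing $\sigma_{\K,2m-1}(n)e^{-A\pi n\alpha}$ rather than $\sigma_{\K,-2m-1}(n)e^{-A\pi n\alpha}$. Under $\alpha\beta=D_\K^2/(16\pi^2)$ the asymmetry between the two sides (only one exponential series, one Kelvin series, one elementary term) is consistent with the residue at $s=0$ contributing $\zeta(1-2m)$ and hence the Bernoulli number.

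Concretely the steps are: (i) fix the functional-equation/Mellin setup, writing $\sum_{n\ge1}\sigma_{\K,a}(n)e^{-ny}=\frac{1}{2\pi i}\int_{(c)}\Gamma(s)\zeta_\K(s)\zeta_\K(s-a)\,y^{-s}\,ds$ for $c$ large (this is the input already used to prove the earlier theorems); (ii) move the contour past the poles at $s=a+1,\,s=1,\,s=0$ and, for the integer-$a$ case, past the would-be extra poles, collecting residues — these give the ``principal part'' $\tfrac12\zeta_\K(-a)$, the $\zeta_\K(1-a)/y$ term, and the $\zeta(a+1)$ term; (iii) in the leftover integral apply the functional equation of $\zeta_\K$ (equivalently of $\zeta$ and of $L(s,\chi_D)$) to turn it into a series in $\sigma_{\K,-a}(n)$ against a Meijer-$G$ / Kelvin-function kernel, exactly as in Theorem~\ref{Lambert series}; (iv) specialize $a=2m-1$ or $a=2m$, use the vanishing of $\zeta(1-2m-\cdots)$ or $\zeta(-2m)$ to kill superfluous terms and $\zeta(1-2m)=-B_{2m}/(2m)$ to produce the stated constants; (v) finally substitute $y=A\pi\alpha$ with $A=8\pi/D_\K$, use $\alpha\beta=D_\K^2/(16\pi^2)$ to rewrite the Kelvin argument $4\pi\sqrt{2n\pi/(yD_\K)}$ as $A\sqrt{\pi n\beta}$, and collect the powers of $\alpha$ and $\beta$. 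I expect the main obstacle to be step~(iv): carefully taking the limit as $a$ approaches an integer, where several terms individually blow up like $1/\sin(\pi a)$ while their sum stays finite, so one has to differentiate the Gamma-factors and the zeta-values simultaneously and check that the spurious logarithmic and $\gamma$-type contributions cancel — precisely the mechanism that, in the companion even-argument identity \eqref{a=-2m case}, does \emph{not} fully cancel and leaves the $\gamma+\log(A\beta/2)$ and $L'(1,\chi_D)$, $\zeta'(2m)$ terms; here one must verify that for the \emph{positive} integer specializations those derivative terms drop out, leaving only the clean Bernoulli/zeta constants.
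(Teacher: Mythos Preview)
Your overall strategy---specialize the master identity to $a=2m-1$ or $a=2m$ and resolve the resulting $0/0$ from $\sin(\pi a)$ via L'H\^opital---is exactly what the paper does. Two corrections will save you time. First, since $a>0$ here, the paper works directly from Theorem~\ref{Lambert series} (valid for $\Re(a)>-1$) rather than the heavier analytic continuation \eqref{Eqn:Analytic continuation}; there is no finite $k$-sum to handle at all. Second, the constant terms do \emph{not} come from any $k$-sum: the Bernoulli piece $\frac{\pi h}{w\sqrt{D_\K}}\frac{B_{2m}}{2m}$ in \eqref{a=2m-1} and the $\zeta(2m+1)$ piece in \eqref{a=2m} both arise from the single term $\dfrac{L(1,\chi_D)\Gamma(a+1)\zeta(a+1)}{y^{a+1}}$ on the left of \eqref{Eqn:Lambert series}, evaluated at $a=2m-1$ (use Euler's formula for $\zeta(2m)$ and the class-number formula) and at $a=2m$ respectively; the terms $\tfrac12\zeta_\K(-a)$ and $\zeta_\K(1-a)/y$ vanish because $\zeta_\K$ has trivial zeros at negative integers. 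The genuine work, as you anticipated in step~(iv), is the L'H\^opital computation on the infinite series: the paper first rewrites the ${}_1F_4$ via a reduction lemma (Lemma~\ref{1F4 lower reduction}) to isolate the piece that matches the $\mathrm{ber}/\mathrm{bei}$ term at the integer, then differentiates the ${}_1F_4$ in $a$ and identifies the result with $\mathrm{kei}$ or $\mathrm{ker}$ using closed-form series for $\sum_k(-1)^k\psi(2k+j)(2z)^{4k}/((2k+j-1)!)^2$ (Lemma~\ref{der1f4@2m-1}); the spurious $\gamma$ and $\log$ contributions cancel exactly, as you predicted. Finally, your Mellin integrand should read $\Gamma(s)\zeta(s)\zeta_\K(s-a)$, not $\zeta_\K(s)\zeta_\K(s-a)$.
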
 
\begin{remark}
One can conclude by a quick observation in the above theorem that \eqref{a=2m-1} provides transformation formula analogous to that for Eisenstein series over an imaginary quadratic field and \eqref{a=2m} provides  an explicit formula for $\zeta(2m+1)$.
\end{remark}


\section{Preliminaries}\label{Prilim}
Throughout the paper, we require some basic tools of analytic number theory and complex analysis. 
\subsection{Schwartz function}
A function is said to be a Schwartz function if all of its derivatives exist and decay faster than any polynomial. We denote the space of Schwartz functions on $\R$ by $\mathscr{S}(\R)$. For $f \in \mathscr{S}(\R)$, we let the Mellin transform of $f$ be $\mathcal{M}(f)$ i.e,
\begin{equation}\label{Schwartz Mellin}
\mathcal{M}(f)(s) = \int_0^\infty f(x)x^{s-1} dx.
\end{equation}
The following lemma provides the analytic behaviour of the Mellin transform of any Schwartz function.
\begin{lemma}\label{Analyticity of Schwartz function}
The function $F(s)$ is absolutely convergent for $\Re(s) > 0$. It can be analytically continued to the whole complex plane except for simple poles at every non-positive integers. It also satisfies the functional equation:
\begin{equation*}\label{F functional equation}
\mathcal{M}(f')(s+1) = -s \, \mathcal{M}(f)(s), 
\end{equation*}
\end{lemma}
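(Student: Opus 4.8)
The plan is to establish the three assertions in the order stated, each one feeding the next (throughout, $F(s)$ is understood to mean $\mathcal{M}(f)(s)$). First I would prove absolute convergence by splitting $\int_0^\infty |f(x)|\,x^{\sigma-1}\,dx$, with $\sigma=\Re(s)$, at $x=1$: on $(1,\infty)$ the Schwartz decay $|f(x)|\ll_N x^{-N}$ (for every $N$) makes the tail converge for \emph{all} $s$, while on $(0,1)$ continuity of $f$ at the origin gives $|f(x)|\le C$, so $\int_0^1|f(x)|x^{\sigma-1}\,dx\le C/\sigma<\infty$ exactly when $\sigma>0$. Differentiation under the integral sign, justified by the same bounds locally uniformly in $s$, then shows $\mathcal{M}(f)$ is holomorphic on $\Re(s)>0$.

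Next I would derive the functional equation. Since $f\in\mathscr{S}(\R)$ forces $f'\in\mathscr{S}(\R)$, the integral $\mathcal{M}(f')(s+1)=\int_0^\infty f'(x)x^{s}\,dx$ also converges absolutely for $\Re(s)>0$. Integration by parts gives $\mathcal{M}(f')(s+1)=\bigl[f(x)x^{s}\bigr]_0^\infty-s\int_0^\infty f(x)x^{s-1}\,dx$, and I would check that the boundary term vanishes: at $\infty$ by rapid decay, and at $0$ because $|f(x)x^{s}|\le Cx^{\sigma}\to 0$ when $\sigma>0$. This yields $\mathcal{M}(f')(s+1)=-s\,\mathcal{M}(f)(s)$.

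Finally I would iterate this identity to obtain the continuation. Rewriting it as $\mathcal{M}(f)(s)=-\tfrac1s\,\mathcal{M}(f')(s+1)$ and using that $\mathcal{M}(f')$ is holomorphic on $\Re(s)>0$, the right-hand side is meromorphic on $\Re(s)>-1$ with at most a simple pole at $s=0$. Applying this $n$ times with the derivatives $f',f'',\dots,f^{(n)}\in\mathscr{S}(\R)$ produces
\begin{equation*}
\mathcal{M}(f)(s)=\frac{(-1)^n}{s(s+1)\cdots(s+n-1)}\,\mathcal{M}\bigl(f^{(n)}\bigr)(s+n),
\end{equation*}
which is meromorphic on $\Re(s)>-n$ with possible poles only at $s=0,-1,\dots,-(n-1)$, all at worst simple; letting $n\to\infty$ covers the whole plane. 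A short telescoping of $\int_0^\infty f^{(k+1)}(x)\,dx$ evaluates the residue at $s=-k$ as $f^{(k)}(0)/k!$, confirming the poles are simple and located only at the non-positive integers.

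I do not foresee a genuine obstacle in this argument. The only points requiring care are the vanishing of the integrated-out boundary terms in the integration by parts and the claim that the iterated identity introduces simple rather than higher-order poles; both are settled by the explicit bounds and the residue computation described above.
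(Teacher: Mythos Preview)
Your proposal is correct and follows essentially the same approach as the paper: derive the functional equation by integration by parts, then iterate it to obtain the meromorphic continuation with at most simple poles at the non-positive integers. The paper's proof is much terser---it simply asserts the integration-by-parts step and writes down the iterated identity---so your version supplies the details (the convergence argument, the vanishing boundary terms, and the residue computation) that the paper omits.
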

\begin{proof}
The functional equation follows immediately from \eqref{Schwartz Mellin}  by applying integration by parts on the integral. Moreover, the functional equation yields
\begin{equation}\label{Eqn:F functional equation}
\mathcal{M}(f^{m})(s+m) = (-1)^{m+1} s(s+1) \cdots (s+m-1) \, \mathcal{M}(f)(s),
\end{equation}
which implies that $\mathcal{M}(f)(s)$ has an analytic continuation to the whole complex plane except for the possible simple poles at $s= 0, 1, \cdots$.
\end{proof}
\textbf{Example.}
One of the most popular example of Schwartz function is $e^{-x}$.  The Mellin transform of $e^{-x}$ is known as Gamma function which can be defined for $\Re(s)>0$ via the convergent improper integral as 
\begin{equation}\label{gammadefn}
\Gamma(s) = \int_0^\infty e^{-x} x^{s-1} {\rm d}x.
\end{equation}
The analytic properties and functional equation of the $\Gamma$-function are given in the following proposition which follows immediately from the previous Lemma.
\begin{proposition}{\cite[Appendix A]{Ayoub}}
The integral in \eqref{gammadefn} is absolutely convergent for $\Re(s) > 0$. It can be analytically continued to the whole complex plane except for simple poles at every non-positive integers. It also satisfies the functional equation:
\begin{equation*}\label{Gamma functional equation}
\Gamma(s+1) = s\Gamma(s).
\end{equation*}
\end{proposition}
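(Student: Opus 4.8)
The plan is to deduce the statement from Lemma~\ref{Analyticity of Schwartz function} applied to the function $f(x) = e^{-x}$, which is the running example of a Schwartz function from the preceding discussion. For this choice $\mathcal{M}(f)(s) = \int_0^\infty e^{-x} x^{s-1}\, dx = \Gamma(s)$, so the first two assertions are immediate consequences of the Lemma: the defining integral converges absolutely on $\Re(s) > 0$, and $\Gamma$ admits a meromorphic continuation to all of $\C$ whose only possible singularities are simple poles at $s = 0, -1, -2, \dots$.

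For the functional equation, note that $f'(x) = -e^{-x} = -f(x)$, whence $\mathcal{M}(f')(s+1) = \int_0^\infty (-e^{-x}) x^{s}\, dx = -\Gamma(s+1)$. Feeding this into the functional equation $\mathcal{M}(f')(s+1) = -s\,\mathcal{M}(f)(s)$ provided by the Lemma gives $-\Gamma(s+1) = -s\,\Gamma(s)$, i.e.\ $\Gamma(s+1) = s\,\Gamma(s)$, first on $\Re(s) > 0$ and then on all of $\C$ by analytic continuation. Iterating this relation yields $\Gamma(s) = \Gamma(s+m+1)/\bigl(s(s+1)\cdots(s+m)\bigr)$ for every $m \in \N$; since the numerator takes the nonzero value $(m-k)!$ at $s = -k$ for each $0 \le k \le m$, the pole of $\Gamma$ there is genuinely simple, with residue $(-1)^k/k!$. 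This would sharpen the bare statement ``simple poles at every non-positive integer'' and give the proposition in full.

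I do not expect any genuine obstacle here. The only slightly delicate points live inside Lemma~\ref{Analyticity of Schwartz function}---namely the justification of holomorphy under the integral sign on $\Re(s) > 0$ and the vanishing of the boundary terms $x^{s} e^{-x}$ at $0$ and at $\infty$ in the integration by parts---and for $f(x) = e^{-x}$ both are entirely transparent. Should a self-contained argument be preferred, one can instead split $\int_0^\infty = \int_0^1 + \int_1^\infty$ to obtain absolute convergence on $\Re(s) > 0$, integrate by parts once to get $\Gamma(s+1) = s\,\Gamma(s)$ directly, and then iterate this identity to continue $\Gamma$ meromorphically with precisely the stated poles.
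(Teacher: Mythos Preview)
Your proposal is correct and follows exactly the route the paper indicates: the proposition is stated as an immediate consequence of Lemma~\ref{Analyticity of Schwartz function} applied to the Schwartz function $f(x)=e^{-x}$, which is precisely what you do. Your additional verification that the poles at the non-positive integers are genuinely simple (via the iterated functional equation and the explicit residues $(-1)^k/k!$) is a welcome refinement that the paper leaves implicit.
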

The $\Gamma$-function satisfies many important properties. Here we mention two of them.
\begin{itemize}
\item[(i)]
 Euler's reflection formula : 
 \begin{equation}\label{Reflection formula}
 \Gamma(s)\Gamma(1-s) = \frac{\pi}{\sin \pi s}
 \end{equation}
 where $s \notin \mathbb{Z}$.
 \item[(ii)]
  Legendre's duplication formula :
  \begin{equation}\label{Duplication formula}
  \Gamma(s)\Gamma \left(s+\frac{1}{2}\right) = 2^{1-2s} \sqrt{\pi} \Gamma(2s).
  \end{equation}
\end{itemize}
Proofs of these properties can be found in \cite[Appendix A]{Ayoub}.

\subsection{Dedekind zeta function}
 The \begin{it}Dedekind zeta function\end{it} attached to an imaginary quadratic field $\K$ can be defined as 
$$
\zeta_\K(s)=\sum_{\mathfrak{a}\subset\mathcal{O}_\K}\frac{1}{N(\mathfrak{a})^s}=\prod_{\mathfrak{p}\subset \mathcal{O}_\K}\bigg(1-\frac{1}{N(\mathfrak{p})^s}\bigg)^{-1},
$$
for all $s \in \C$ with $\mathfrak{R} (s)>1$, where $\mathfrak{a}$ and $\mathfrak{p}$ run over the non-zero integral ideals and prime ideals of $\mathcal{O}_\K$ respectively.
For $v_\K(m)$ denoting the number of non-zero integral ideals in $\mathcal{O}_\K$ with norm $m$,  $\zeta_\K$ can also be expressed as 
$$
\zeta_\K(s)=\sum_{m=1}^\infty \frac{v_\K(m)}{m^s}.
$$
The following proposition provides the analytic behaviour and the functional equation satisfied by the Dedekind zeta function.
\begin{proposition}{\cite[pp. 254-255]{Lang}}\label{Analyticity of Dedekind zeta}
The function $\zeta_\K(s)$ is absolutely convergent for $\mathfrak{R}(s) > 1$. It can be analytically continued to the whole complex plane except for a simple pole at $s = 1$ with residue $L(1, \chi_D)$. It also satisfies the functional equation
\begin{equation}\label{zetaK functional equation}
\zeta_\K(s) = (2\pi)^{2s-1} D_\K^{\frac{1}{2}-s}\frac{\Gamma(1-s)}{\Gamma(s)}\zeta_\K(1-s).
\end{equation}
\end{proposition}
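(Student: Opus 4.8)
The plan is to reduce every assertion to the classical analytic theory of the Riemann zeta function and of Dirichlet $L$-functions through the factorization of the Dedekind zeta function of a quadratic field. Since $\K$ is imaginary quadratic with (necessarily negative) discriminant $D$, the Kronecker symbol $\chi_D=\left(\frac{D}{\cdot}\right)$ is a primitive real Dirichlet character modulo $D_\K$, and it is odd, i.e.\ $\chi_D(-1)=-1$, because $\K$ admits no real embedding. Working out how each rational prime splits, ramifies, or remains inert in $\mathcal{O}_\K$ in terms of the value of $\chi_D$ shows that the local Euler factor of $\zeta_\K$ at $p$ is $(1-p^{-s})^{-1}(1-\chi_D(p)p^{-s})^{-1}$; equivalently $v_\K$ is the Dirichlet convolution $1\ast\chi_D$, so that
\begin{equation*}
\zeta_\K(s)=\sum_{m=1}^\infty\frac{v_\K(m)}{m^s}=\zeta(s)\,L(s,\chi_D).
\end{equation*}

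The first two assertions are then immediate. For $\Re(s)>1$ both factors converge absolutely and $|\zeta_\K(s)|\le\zeta(\Re(s))^2$, giving absolute convergence there. The Riemann zeta function continues meromorphically to $\C$ with a single simple pole at $s=1$ of residue $1$, while $L(s,\chi_D)$, being attached to a non-principal character, is entire; hence $\zeta_\K$ continues meromorphically to $\C$ with a unique simple pole at $s=1$ and residue $\big(\lim_{s\to1}(s-1)\zeta(s)\big)L(1,\chi_D)=L(1,\chi_D)$.

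For the functional equation I would multiply the two classical completed functional equations. Put $\Lambda(s):=\pi^{-s/2}\Gamma(s/2)\zeta(s)$ and, since $\chi_D$ is odd, $\Lambda(s,\chi_D):=(D_\K/\pi)^{(s+1)/2}\Gamma\!\left(\tfrac{s+1}{2}\right)L(s,\chi_D)$. Then $\Lambda(s)=\Lambda(1-s)$, and because $\chi_D$ is an odd primitive real character with Gauss sum $\tau(\chi_D)=i\sqrt{D_\K}$, the root number $\tau(\chi_D)/(i\sqrt{D_\K})$ equals $1$ and $\Lambda(s,\chi_D)=\Lambda(1-s,\chi_D)$. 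Multiplying the two, collapsing $\Gamma(s/2)\Gamma\!\left(\tfrac{s+1}{2}\right)$ into $2^{1-s}\sqrt{\pi}\,\Gamma(s)$ by Legendre's duplication formula \eqref{Duplication formula}, and collecting the powers of $2$, $\pi$ and $D_\K$, one obtains $\Lambda(s)\Lambda(s,\chi_D)=2\sqrt{D_\K}\,\xi_\K(s)$ with $\xi_\K(s):=D_\K^{s/2}(2\pi)^{-s}\Gamma(s)\zeta_\K(s)$. Therefore $\xi_\K(s)=\xi_\K(1-s)$, and solving this relation for $\zeta_\K(s)$ yields precisely \eqref{zetaK functional equation}; the reflection formula \eqref{Reflection formula} can be invoked at the end as a sign-and-gamma-factor check.

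The content here is entirely standard — which is why the statement is simply quoted from \cite{Lang} — so the only points I would take care over are the two pieces of arithmetic input: (i) the factorization $\zeta_\K(s)=\zeta(s)L(s,\chi_D)$, which depends on the splitting law for primes in the quadratic field and on $\chi_D$ being exactly the attached quadratic character of conductor $D_\K$; and (ii) the archimedean bookkeeping, i.e.\ matching $\pi^{-s/2}\Gamma(s/2)\cdot(D_\K/\pi)^{(s+1)/2}\Gamma\!\left(\tfrac{s+1}{2}\right)$ with $D_\K^{s/2}(2\pi)^{-s}\Gamma(s)$ through \eqref{Duplication formula}, together with the computation $\tau(\chi_D)=i\sqrt{D_\K}$ that makes the root number of $L(s,\chi_D)$ trivial. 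Neither step is hard, but both must be transcribed with care, and the gamma-factor reorganization is the place where an error is most likely to creep in.
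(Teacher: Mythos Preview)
Your argument is correct: the factorization $\zeta_\K(s)=\zeta(s)L(s,\chi_D)$ immediately gives absolute convergence and the pole structure, and your gamma-factor bookkeeping via \eqref{Duplication formula} does collapse the two completed functional equations into $\xi_\K(s)=\xi_\K(1-s)$ and hence into \eqref{zetaK functional equation}.

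As for comparison, the paper does not supply a proof at all --- the proposition is simply quoted from \cite[pp.~254--255]{Lang} as background, with no argument given. So there is nothing to compare: you have written out a standard self-contained derivation where the authors were content to cite. Your route through the product of the completed $\Lambda$-functions is in fact the usual textbook proof (and is essentially what one finds in Lang, though phrased there in the generality of arbitrary number fields via Hecke's theta-function method). The only remark I would add is that your final sentence about invoking \eqref{Reflection formula} ``as a sign-and-gamma-factor check'' is superfluous: once $\xi_\K(s)=\xi_\K(1-s)$ is in hand, solving for $\zeta_\K(s)$ is pure algebra and no reflection formula is needed.
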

The famous Dirichlet class number formula for the Dedekind zeta function over an imaginary quadratic field is given in the following proposition.
\begin{proposition}\label{Class number formula}
 The quadratic $L$-function $L(s, \chi_D)$ of $\K$ satisfies
\begin{equation*}
L(1, \chi_D) = \frac{2\pi h}{w\sqrt{D_\K}}.
\end{equation*}
\end{proposition}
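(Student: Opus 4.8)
The plan is to compute the residue of $\zeta_\K$ at $s=1$ by counting integral ideals and then compare with the value already supplied by Proposition \ref{Analyticity of Dedekind zeta}. By that proposition $\zeta_\K(s)$ has a simple pole at $s=1$ with residue $L(1,\chi_D)$, so it suffices to show that $\Res_{s=1}\zeta_\K(s)=\tfrac{2\pi h}{w\sqrt{D_\K}}$. Writing $\zeta_\K(s)=\sum_{n\ge 1}v_\K(n)n^{-s}$ and $A(x):=\sum_{n\le x}v_\K(n)$, partial summation gives $\zeta_\K(s)=s\int_1^\infty A(x)\,x^{-s-1}\,{\rm d}x$ for $\mathfrak{R}(s)>1$; hence if we can establish an asymptotic $A(x)=\kappa x+O(\sqrt x)$ for some constant $\kappa$, the integral splits as $\tfrac{\kappa s}{s-1}$ plus a term holomorphic on $\mathfrak{R}(s)>\tfrac12$, so that $\Res_{s=1}\zeta_\K(s)=\kappa$. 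Thus the whole problem reduces to proving
\begin{equation*}
A(x)=\frac{2\pi h}{w\sqrt{D_\K}}\,x+O(\sqrt x)\qquad(x\to\infty),
\end{equation*}
a purely Abelian reduction requiring no Tauberian input.

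For the ideal count I would argue class by class via geometry of numbers. Split the non-zero integral ideals of $\mathcal{O}_\K$ into the $h$ ideal classes $C_1,\dots,C_h$, so that $A(x)=\sum_{j=1}^h N_{C_j}(x)$ where $N_C(x):=\#\{\mathfrak{a}\in C:\mathfrak{N}_{\K/\Q}(\mathfrak{a})\le x\}$. Fix a class $C$ and choose an integral ideal $\mathfrak{b}\in C^{-1}$. For an integral ideal $\mathfrak{a}$ the product $\mathfrak{a}\mathfrak{b}$ is principal exactly when $\mathfrak{a}\in C$, and writing $\mathfrak{a}\mathfrak{b}=(\xi)$ we then have $\xi\in\mathfrak{b}$ and $\mathfrak{N}_{\K/\Q}(\mathfrak{a})\,\mathfrak{N}_{\K/\Q}(\mathfrak{b})=\mathfrak{N}(\xi)=|\xi|^2$ under the embedding $\K\hookrightarrow\C$. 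Since the units of $\mathcal{O}_\K$ are precisely the $w$ roots of unity, $\mathfrak{a}\mapsto(\xi)$ sets up a bijection between $\{\mathfrak{a}\in C:\mathfrak{N}_{\K/\Q}(\mathfrak{a})\le x\}$ and the set of non-zero vectors of the lattice $\mathfrak{b}\subset\C$ lying in the disk $\{z:|z|^2\le x\,\mathfrak{N}_{\K/\Q}(\mathfrak{b})\}$, counted modulo the rotation action of the $w$-th roots of unity.

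It then remains to count lattice points. The image of $\mathcal{O}_\K$ in $\C\cong\R^2$ has covolume $\tfrac12\sqrt{D_\K}$, so $\mathfrak{b}$ has covolume $\tfrac12\sqrt{D_\K}\,\mathfrak{N}_{\K/\Q}(\mathfrak{b})$; the classical estimate for the number of lattice points of a fixed lattice in a disk of radius $R=(x\,\mathfrak{N}_{\K/\Q}(\mathfrak{b}))^{1/2}$ (main term $=$ area divided by covolume, error $=$ perimeter bound) gives $\tfrac{\pi R^2}{(1/2)\sqrt{D_\K}\,\mathfrak{N}_{\K/\Q}(\mathfrak{b})}+O(R)=\tfrac{2\pi}{\sqrt{D_\K}}\,x+O(\sqrt x)$, and dividing by $w$ for associates yields $N_C(x)=\tfrac{2\pi}{w\sqrt{D_\K}}\,x+O(\sqrt x)$. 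As the leading coefficient is independent of $C$, summing over the $h$ classes produces the displayed asymptotic with $\kappa=\tfrac{2\pi h}{w\sqrt{D_\K}}$, and comparison with Proposition \ref{Analyticity of Dedekind zeta} gives $L(1,\chi_D)=\tfrac{2\pi h}{w\sqrt{D_\K}}$. The step that demands real care is the bookkeeping in the second paragraph — the bijection between integral ideals in a fixed class and lattice vectors modulo units, and the verification that the norm bound becomes exactly a disk condition — together with the lattice-point-in-a-disk estimate; the reduction to this count and the passage back to the residue are routine.
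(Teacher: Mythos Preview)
Your argument is the classical geometry-of-numbers proof of Dirichlet's class number formula in the imaginary quadratic case, and it is correct as written: the bijection between integral ideals in a fixed class and nonzero lattice vectors in $\mathfrak{b}$ modulo the $w$ roots of unity is standard, the covolume computation $\mathrm{covol}(\mathfrak{b})=\tfrac12\sqrt{D_\K}\,\mathfrak{N}_{\K/\Q}(\mathfrak{b})$ is right, and the passage from the asymptotic of $A(x)$ to the residue via $\zeta_\K(s)=s\int_1^\infty A(x)x^{-s-1}\,{\rm d}x$ is clean.

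The paper, however, does not prove this proposition at all. It is stated as background --- ``the famous Dirichlet class number formula'' --- and used without argument, in keeping with its role as a standard input rather than a contribution of the article. So there is no approach to compare against: you have supplied a complete self-contained proof where the paper simply quotes the result. If anything, your write-up is more than the paper requires; a one-line citation to a standard reference (e.g.\ Lang, \emph{Algebraic Number Theory}, or Davenport, \emph{Multiplicative Number Theory}) would match the paper's treatment.
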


\subsection{Special functions}\label{sec:specialfunctions}
The mathematical functions which are non-elementary and are useful due to their applications in mathematical analysis, functional analysis, geometry, physics, and other fields are known as special functions. These mainly appear as solutions of differential equations or integrals of elementary functions. 

One of the most important families of special functions are the Bessel functions.
The Bessel functions of the first kind and the second kind of order $\nu$ are defined by \cite[p.~40, 64]{watson-1944a}
\begin{align*}
	J_{\nu}(z)&:=\sum_{m=0}^{\infty}\frac{(-1)^m(z/2)^{2m+\nu}}{m!\Gamma(m+1+\nu)} \hspace{9mm} (z,\nu\in\mathbb{C}),\\
	Y_{\nu}(z)&:=\frac{J_{\nu}(z)\cos(\pi \nu)-J_{-\nu}(z)}{\sin{\pi \nu}}\hspace{5mm}(z\in\mathbb{C}, \nu\notin\mathbb{Z}),
	\end{align*}
	along with $Y_n(z)=\lim_{\nu\to n}Y_\nu(z)$ for $n\in\mathbb{Z}$. 
The modified Bessel functions of the first and second kinds are defined by \cite[p.~77, 78]{watson-1944a}
\begin{align}
I_{\nu}(z)&:=
\begin{cases}
e^{-\frac{1}{2}\pi\nu i}J_{\nu}(e^{\frac{1}{2}\pi i}z), & \text{if $-\pi<\arg(z)\leq\frac{\pi}{2}$,}\nonumber\\
e^{\frac{3}{2}\pi\nu i}J_{\nu}(e^{-\frac{3}{2}\pi i}z), & \text{if $\frac{\pi}{2}<\arg(z)\leq \pi$,}
\end{cases}\nonumber\\
K_{\nu}(z)&:=\frac{\pi}{2}\frac{I_{-\nu}(z)-I_{\nu}(z)}{\sin\nu\pi}\label{kbesse}
\end{align}
respectively. When $\nu\in\mathbb{Z}$, $K_{\nu}(z)$ is interpreted as a limit of the right-hand side of \eqref{kbesse}. 
The real and imaginary parts of Bessel functions are known as Kelvin functions \cite[p. 267]{NIST}. More precisely, for any $x\geq 0$ and $\nu \in \R$, the Kelvin functions are defined as
\begin{align*}\label{Kelvin Ber Bei}
{\rm ber}_\nu(x) + i \ {\rm bei}_\nu(x) = J_{\nu}(xe^{3\pi i/4})
\end{align*}
and
\begin{align*}
{\rm ker}_{\nu}(x) + i \ {\rm kei}_{\nu}(x) = e^{-\nu \pi i/2}K_{\nu} (xe^{\pi i/4})
\end{align*}
where $J_\nu$ ( resp. $K_\nu$) denotes the Bessel function of first kind (resp. modified Bessel function of second kind) of order $\nu$.

The generalized hypergeometric function is defined by the following power series :
\begin{equation*}
\pFq{p}{q}{a_1, a_2, \cdots, a_p}{b_1,b_2, \cdots, b_q}{z}:=\sum_{n=0}^{\infty}\frac{(a_1)_n(a_2)_n\cdots(a_p)_n}{(b_1)_n(b_2)_n\cdots(b_q)_n}\frac{z^n}{n!}
\end{equation*}
where $(a)_n$ denotes the Pochhammer symbol defined by $(a)_n:=a(a+1)\cdots(a+n-1)=\G(a+n)/\G(a)$.
It is well-known \cite[p.~62, Theorem 2.1.1]{AAR} that the above series converges absolutely for all $z$ if $p\leq q$ and for $|z|<1$ if $p=q+1$, and it diverges for all $z\neq0$ if $p>q+1$ and the series does not terminate.

The following proposition states an important result due to Slater \cite[p.~56-59]{Marichev} which  precisely evaluates inverse Mellin transforms of certain functions in terms of generalized hypergeometric functions. We give its statement below to make the paper self-contained. To begin with we need some notations . Let
\begin{equation*}
\Gamma\genfrac[]{0pt}{}{a_1, a_2, \dots, a_A}{b_1, b_2, \dots, b_B}  \equiv \G[(a);(b)]=\frac{\Gamma \left( a_1 \right) \Gamma \left( a_2 \right) \dots \Gamma \left( a_A \right) }{\Gamma \left( b_1 \right) \Gamma \left( b_2 \right) \dots \Gamma \left( b_B \right) }, 
\end{equation*}
\begin{equation*}
(a) + s := a_1+s, a_2+s, \dots, a_A+s,
\end{equation*}
\begin{equation*}
(b)' - b_k := b_1-b_k, \dots, b_{k-1}-b_k,  b_{k+1}-b_k, \dots, b_B-b_k,
\end{equation*}
\begin{equation*}
\Sigma_A(z) := \sum_{j=1}^A z^{a_j} \Gamma \genfrac[]{0pt}{}{(a)' - a_j, (b) +  a_j}{(c) - a_j, (d) +  a_j} 
{}_{B+C}F_{A+D-1}
 \left( \genfrac{}{}{0pt}{}{(b)+a_j, 1+a_j-(c)}{1+a_j-(a)', (d)+a_j} 
 \bigg| {(-1)^{C-A} z} \right),
\end{equation*}
\begin{equation*}
\Sigma_B(1/z) := \sum_{k=1}^B z^{-b_k} \Gamma \genfrac[]{0pt}{}{(b)' - b_k, (a) +  b_k}{(d) - b_k, (c) +  b_k} 
{}_{A+D}F_{B+C-1}
 \left( \genfrac{}{}{0pt}{}{(a)+b_k, 1+b_k-(d)}{1+b_k-(b)', (c)+b_k} 
 \bigg| {\frac{(-1)^{D-B}}{z}} \right),
\end{equation*}
\begin{proposition}[Slater's Theorem]\label{Slater}
Let 
\begin{equation}\label{functionHs}
\mathscr{H}(s) = \Gamma \genfrac[]{0pt}{}{(a)+s, (b)-s}{(c)+s, (d)-s},
\end{equation}
where the vectors $(a)$, $(b)$, $(c)$, and $(d)$ have, respectively, A, B, C, and D components $a_j$, $b_k$, $c_l$, and $d_m$. Then if the following two groups of conditions hold:
\begin{equation}\label{condition1}
-\textup{Re}(a_j) < \textup{Re}(s)  < \textup{Re}(b_k)  \quad (j = 1,2, \dots, A, \quad  k=1,2, \dots, B),\\
\end{equation}
\begin{equation}\label{condition2}
\begin{cases}
A+B > C+D, \\
A+B = C+D, \quad \textup{Re}(s(A+D-B-C)) < -\textup{Re}(\eta) \\
A=C, \quad B=D, \quad \textup{Re}(\eta) <0,
\end{cases}
\end{equation}
where
\begin{equation*}
\eta := \sum_{j=1}^A a_j + \sum_{k=1}^B b_k - \sum_{l=1}^C c_l - \sum_{m=1}^D d_m,
\end{equation*}
then for these $s$ we have
\begin{equation*}
\mathscr{H}(s) = 
\begin{cases}
\displaystyle\int_{0}^{\infty} x^{s-1} \Sigma_A(x)\, dx, \text{ if } A+D>B+C, \\
\displaystyle\int_{0}^{1} x^{s-1} \Sigma_A(x)\, dx  + \int_{1}^{\infty} x^{s-1} \Sigma_B(1/x)\, dx, \text{ if } A+D=B+C,\\
\displaystyle\int_{0}^{\infty} x^{s-1} \Sigma_B(1/x)\, dx, \text{ if } A+D<B+C,
\end{cases}
\end{equation*}
$\Sigma_A(1) = \Sigma_B(1)$ if $A+D=B+C$, \textup{Re}$(\eta) +C-A+1<0, A \geq C$.
\end{proposition}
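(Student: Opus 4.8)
The approach I would take is classical Mellin--Barnes analysis: realise $\mathscr{H}(s)$ as the Mellin transform of its own inverse Mellin transform and evaluate that inverse transform by the residue theorem. Concretely, since \eqref{condition1} forces the strip $-\max_j\Re(a_j)<\Re(s)<\min_k\Re(b_k)$ to be non-empty, I would fix a vertical line $\Re(s)=c$ inside it and set
\[
f(x):=\frac{1}{2\pi i}\int_{c-i\infty}^{c+i\infty}\mathscr{H}(s)\,x^{-s}\,ds\qquad(x>0),
\]
the goal being to identify $f$ with $\Sigma_A(x)$, with $\Sigma_B(1/x)$, or with the piecewise combination, according to the three cases. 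The first step is to check that this integral converges absolutely and defines a continuous function of $x$; for this one applies Stirling's formula, rewriting each factor $\Gamma(b_k-s)$ through the reflection formula \eqref{Reflection formula} as $\pi/(\sin\pi(b_k-s)\,\Gamma(1-b_k+s))$, so that $|\mathscr{H}(c+it)|$ is seen to behave like a fixed power of $|t|$ times $\exp(-\tfrac{\pi}{2}(A+B-C-D)|t|)$ as $|t|\to\infty$. The three alternatives listed in \eqref{condition2} are exactly the regimes in which $\mathscr{H}(c+it)\,x^{-c-it}$ is integrable in $t$, the delicate one being $A+B=C+D$, where $\Re(\eta)$ and $\Re(s(A+D-B-C))$ must be balanced.

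Next I would shift the contour. In the case $A+D>B+C$ the integrand decays on the large left-hand semicircles $|s|=R_N$ (with $R_N\to\infty$ chosen to avoid the poles), so closing the contour to the left and summing residues expresses $f(x)$ as the sum of the residues of $\mathscr{H}(s)\,x^{-s}$ at the poles of $\prod_j\Gamma(a_j+s)$, located at $s=-a_j-n$ for $n\ge0$. Using $\Res_{s=-a_j-n}\Gamma(a_j+s)=(-1)^n/n!$ and then converting every $n$-dependence into Pochhammer symbols via $\Gamma(\alpha+n)=(\alpha)_n\Gamma(\alpha)$ and $\Gamma(\alpha-n)=(-1)^n\Gamma(\alpha)/(1-\alpha)_n$, the sum over $n$ for fixed $j$ collapses into a ${}_{B+C}F_{A+D-1}$ evaluated at $(-1)^{C-A}x$, and summing over $j=1,\dots,A$ produces precisely $\Sigma_A(x)$. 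The case $A+D<B+C$ is handled symmetrically, closing to the right, collecting the poles $s=b_k+n$ of $\prod_k\Gamma(b_k-s)$, and obtaining $\Sigma_B(1/x)$. In the balanced case $A+D=B+C$ the joining arcs no longer decay for every $x$; a Barnes-type estimate shows that the left closure is legitimate for $0<x<1$ and the right closure for $x>1$, which gives the split representation, while the compatibility relation $\Sigma_A(1)=\Sigma_B(1)$ and continuity at $x=1$ are exactly what the supplementary hypotheses $\Re(\eta)+C-A+1<0$ and $A\ge C$ secure.

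Finally, once $f$ has been identified with the appropriate hypergeometric sum, the Mellin inversion theorem --- applicable because the growth estimates above place us in its classical hypotheses --- returns $\mathscr{H}(s)=\int_0^\infty x^{s-1}f(x)\,dx$ for $s$ in the strip $-\max_j\Re(a_j)<\Re(s)<\min_k\Re(b_k)$, which is the claimed identity, with the integral split at $x=1$ precisely in the regime $A+D=B+C$.

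The step I expect to be the main obstacle is the arc estimate in the boundary cases, especially $A+D=B+C$: there the crude Stirling bound is insufficient, and one must track the oscillatory factors $\sin\pi(b_k-s)$ to show that the contribution of the connecting arcs tends to zero along a carefully chosen sequence of radii $R_N$. A secondary technicality is genericity of the parameters: if some difference $a_j-a_{j'}$ or $b_k-b_{k'}$ is a non-negative integer, the poles collide and their residues generate logarithmic terms, so the clean hypergeometric expressions $\Sigma_A$ and $\Sigma_B$ should be understood as the limiting form of the generic case.
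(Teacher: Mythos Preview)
The paper does not prove this proposition at all: it is quoted verbatim from Marichev's book \cite[p.~56--59]{Marichev} and stated ``to make the paper self-contained,'' with no argument given. So there is no proof in the paper to compare your proposal against.

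That said, your outline is the standard Mellin--Barnes/residue-calculus route to Slater's theorem and is essentially correct in structure. The points you flag as delicate --- the arc estimates in the balanced case $A+D=B+C$ and the confluence of poles when parameter differences are integers --- are indeed the genuine technical content of any complete proof, and a fully rigorous treatment would need to handle them carefully (as Marichev and Slater do). For the purposes of this paper, however, the proposition is used as a black box, so your sketch goes well beyond what the authors supply.
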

\begin{corollary}\cite[p.~58]{Marichev} \label{SlatersCor}
Under the conditions \eqref{condition1} and \eqref{condition2}, the inverse Mellin transform of the function in \eqref{functionHs} is a function $H(x)$ of hypergeometric type given by
\begin{equation*}
H(x) = 
\begin{cases}
\Sigma_A(x) \text{ for } x>0, \quad \text{ if } A+D>B+C, \\
\Sigma_A(x) \text{ for } 0<x<1, \quad \text{ or }  \quad \Sigma_B(1/x) \text{ for } x>1, \quad  \text{ if } A+D=B+C,\\
\Sigma_B(1/x) \text{ for } x>0, \quad \text{ if } A+D<B+C,
\end{cases}
\end{equation*}
$\mathscr{H}(1) = \Sigma_A(1) = \Sigma_B(1)$ if $A+D=B+C$, Re$(\eta) +C-A+1<0, A \geq C$.
\end{corollary}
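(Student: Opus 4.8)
The plan is to read off Corollary~\ref{SlatersCor} directly from Slater's Theorem (Proposition~\ref{Slater}) by invoking the Mellin inversion theorem together with the injectivity of the Mellin transform on its strip of convergence. Proposition~\ref{Slater} already produces, under the hypotheses \eqref{condition1} and \eqref{condition2}, an integral representation of the Gamma quotient $\mathscr{H}(s)$ of \eqref{functionHs}: according to the sign of $A+D-B-C$ one has either $\mathscr{H}(s)=\int_0^\infty x^{s-1}\Sigma_A(x)\,dx$, or $\mathscr{H}(s)=\int_0^\infty x^{s-1}\Sigma_B(1/x)\,dx$, or the split representation $\mathscr{H}(s)=\int_0^1 x^{s-1}\Sigma_A(x)\,dx+\int_1^\infty x^{s-1}\Sigma_B(1/x)\,dx$. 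Letting $H(x)$ denote the corresponding (piecewise) integrand, this is exactly the assertion $\mathscr{H}(s)=\mathcal{M}(H)(s)$ for $s$ in the vertical strip \eqref{condition1}.

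First I would invoke Mellin inversion: if $\mathcal{M}(H)(s)$ converges absolutely in a strip $\sigma_1<\Re(s)<\sigma_2$ and $H$ is continuous there (together with the mild regularity that $\mathscr{H}(c+it)$ is integrable in $t$, or that $H$ has locally bounded variation), then $H(x)=\frac{1}{2\pi i}\int_{(c)}\mathscr{H}(s)x^{-s}\,ds$ for every $c$ in the strip; equivalently, $H$ is the inverse Mellin transform of $\mathscr{H}$, which is precisely the content of the corollary. The strip of absolute convergence is the one carved out by \eqref{condition1}, since the numbers $-\Re(a_j)$ and $\Re(b_k)$ govern the growth of $H$ at $x=0$ and at $x=\infty$ respectively. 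That $\Sigma_A(x)$ is analytic at $x=0$ and $\Sigma_B(1/x)$ analytic at $x=\infty$ is immediate from their definitions as powers of $x$ times convergent ${}_pF_q$-series, so $H$ is genuinely ``of hypergeometric type''.

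In the transitional case $A+D=B+C$ the function $H$ is given by two different formulas on $(0,1)$ and on $(1,\infty)$, so before speaking of a single inverse transform one must match the pieces at $x=1$: this is exactly the extra hypothesis $\mathscr{H}(1)=\Sigma_A(1)=\Sigma_B(1)$, together with $\Re(\eta)+C-A+1<0$ and $A\geq C$, which render $H$ continuous at $x=1$; the same conditions are what allow the contour-shift argument underlying Proposition~\ref{Slater} to close. Granting continuity of $H$ on all of $(0,\infty)$, the inversion applies verbatim and the corollary follows.

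The main obstacle I anticipate is not a single deep step but the analytic bookkeeping behind the growth estimates. One has to verify, via Stirling's asymptotics for $\Gamma$, that $\mathscr{H}(c+it)$ decays fast enough along vertical lines for the inversion integral to converge, and that the integrals over the large semicircular arcs (pushed to the left when $0<x<1$ and to the right when $x>1$) vanish, so that the residue sums generating $\Sigma_A(x)$ and $\Sigma_B(1/x)$ are legitimate. These are exactly the roles played by the three alternative clauses of \eqref{condition2}; once they are in force, Corollary~\ref{SlatersCor} is simply Proposition~\ref{Slater} re-expressed in the language of inverse Mellin transforms, and requires no further computation.
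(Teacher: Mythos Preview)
The paper does not actually prove this statement: Corollary~\ref{SlatersCor} is quoted verbatim from Marichev \cite[p.~58]{Marichev} and is used as a black box alongside Proposition~\ref{Slater}. So there is no ``paper's own proof'' to compare against.

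That said, your proposal is the natural and correct way to see why the corollary follows from the proposition: Proposition~\ref{Slater} asserts that $\mathscr{H}(s)$ is the Mellin transform of the piecewise hypergeometric function $H(x)$, and Mellin inversion (with the decay of $\mathscr{H}(c+it)$ in $t$ supplied by Stirling and governed by \eqref{condition2}) recovers $H(x)$ as the inverse transform. Your identification of the analytic bookkeeping---matching at $x=1$ in the borderline case $A+D=B+C$, and the vanishing of the large-arc contributions---is exactly what Marichev carries out in the cited pages. Nothing is missing from your sketch.
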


\section{Generalization of a Voronoi-type identity over an imaginary quadratic field}
In this section, we setup our main ingredients to prove the identities provided in \S \ref{intro}. 
Dirichlet introduced the problem of counting the number of lattice points inside or on the hyperbola. In other words, he studied the asymptotic behaviour of the summatory function of the divisor function. Let  $d(n)$ denotes the divisor function i.e, $d(n) = \sum_{d\mid n} 1$. He obtained an asymptotic formula with the main term $x\log x + (2\gamma - 1)x + \frac{1}{4}$ and an error term of order $\sqrt{x}$. The problem of estimating the error term is known as the Dirichlet hyperbola problem or the Dirichlet divisor problem. The bound on the error term has been further improved by many mathematicians. At this writing, the best estimate $O(x^{131/416+\epsilon})$, for each $\epsilon > 0$, as $x \to \infty$, is due to M.~N.~ Huxley \cite{Huxley}.

Vorono\"{\dotlessi} \cite{Voronoi} introduced a new phase into the Dirichlet divisor problem. He was able to express the error term as an infinite series containing the Bessel functions. More precisely, letting $Y_\nu$ (resp. $K_\nu$) denote the Bessel function of the second kind (resp. modified Bessel function of second kind) of order $\nu$ and $\gamma$ denote the Euler constant, a celebrated identity of Vorono\"{\dotlessi} is given by
\begin{equation}\label{Voronoi 1-identity}
\sideset{}{'}\sum_{n\le x}\!\!d(n) =x\log x+(2\gamma-1)x+\frac{1}{4} - \sum_{k=1}^\infty\frac{d(k)}{k}
\left(Y_1\left(4\pi\,\sqrt{xk}\,\right)+\frac{2}{\pi}K_1\left(4\pi\,\sqrt{xk}\,\right)\right)\sqrt{xk},
\end{equation}
where $\sum'$ means that the term corresponding to $n=x$ is halved. In the same article \cite{Voronoi}, Vorono\"{\dotlessi} also obtained a more general form of \eqref{Voronoi 1-identity}, namely
\begin{equation}\label{Voronoi 2-identity}
\sum_{\alpha<n<\beta} d(n) f(n) = \int_\alpha^\beta (2\gamma + \log t) f(t) {\rm d}t + 2\pi \sum_{n=1}^\infty d(n) \int_\alpha^\beta f(t) \left(\frac{2}{\pi} K_0(4\pi \sqrt{nt}) - Y_0(4\pi \sqrt{nt}) \right) {\rm d}t,
\end{equation}
where $f(t)$ is a function of bounded variation in $(\alpha, \beta)$ and $0<\alpha<\beta$.  A shorter proof of the above identity for $0<\alpha<\beta$ with $\alpha, \beta \not\in \Z$ was offered by Koshliakov in \cite{Koshliakov} where he assumed $f$ to be any analytic function lying inside a closed contour strictly containing the interval $[\alpha, \beta]$. The identity \eqref{Voronoi 1-identity} can be generalized by generalizing the divisor function in different directions (cf. \cite{Banerjee} \cite{WB}).


The identity \eqref{Voronoi 2-identity} was generalized in \cite[Section 6, 7]{bdrz} for the general divisor function $\sigma_a(n)$ which can be defined as $\sigma_a(n) : = \sum_{d\mid n} d^a$ where $a$ is any complex number. 
The function $\sigma_{\K, a}(n)$ defined in \eqref{Divisor function} is basically the function which is analogous to $\sigma_a(n)$ over an imaginary quadratic field.  The following theorem states an analogous identity of \eqref{Voronoi 2-identity} associated to the divisor function $\sigma_{\K, a}(n)$. To the best of our knowledge, the result is new. Before stating our result, we define the function 
\begin{align*}
H_{\K, \nu}(x): = \frac{\sqrt{\pi}}{\sin(2\pi\nu)}&\Bigg\{\frac{2^{1-4\nu}}{\Gamma^2(1-2\nu)}\left(\frac{x}{4}\right)^{-\nu}\pFq05{-}{1-\nu,1-\nu,\frac{1}{2}-\nu,\frac{1}{2}-\nu,\frac{1}{2}}{-\frac{x^2}{16}}\nonumber\\
&-\frac{2^{1+2\nu}\cos(\pi\nu)}{\Gamma(1+2\nu)}\left(\frac{x}{4}\right)^{\nu}\pFq05{-}{1+\nu,\frac{1}{2}+\nu,\frac{1}{2},\frac{1}{2},1}{-\frac{x^2}{16}}\nonumber\\
&-\frac{2^{4+2\nu}\sin(\pi\nu)}{\Gamma(2+2\nu)}\left(\frac{x}{4}\right)^{1+\nu}\pFq05{-}{\frac{3}{2}+\nu,1+\nu,\frac{3}{2},\frac{3}{2},1}{-\frac{x^2}{16}}\Bigg\} .
\end{align*}

\begin{theorem}\label{Voronoi over K}
Let $a$ be any complex number with $-1<\mathrm{Re}(a)<1$. Then for any Schwarz function $f$, the identity
\begin{align*}
\sum_{n=1}^\infty\sigma_{\K,a}(n)f(n)&=\int_0^\infty \left(\zeta_\K(1-a)+ t^a \frac{2\pi h \zeta(1+a)}{w\sqrt{D_\K}} \right)  f(t) \, {\rm d}t  -\frac{1}{2} \zeta_\K(-a)f(0^+) \nonumber\\
&+2 \pi^{ \frac{3-a}{2}}D_{\K}^{\frac{a-1}{2}}\sum_{n=1}^\infty \sigma_{\K,-a}(n)n^{a/2} \int_0^\infty t^{a/2}H_{\K, a/2}\left(\frac{4\pi^{3}nt}{D_\K}\right)f(t)\ {\rm d}t.
\end{align*}
holds, provided the Mellin transform of $f$ decays faster than any polynomial in any bounded vertical strip.
\end{theorem}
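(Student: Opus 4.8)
The natural route is to pass through Mellin transforms and a contour-shift argument. First I would write the left-hand side as an inverse Mellin transform: by \eqref{Schwartz Mellin} and interchanging summation with the Mellin inversion integral, one gets
\[
\sum_{n=1}^\infty \sigma_{\K,a}(n) f(n) = \frac{1}{2\pi i}\int_{(c)} \mathcal{M}(f)(s)\, \zeta_\K(s)\, \zeta(s-a)\, ds
\]
for $c$ large enough, using the Dirichlet series identity $\sum_{n\ge 1}\sigma_{\K,a}(n) n^{-s} = \zeta_\K(s)\zeta(s-a)$ (which follows from \eqref{Divisor function} since $\sigma_{\K,a}=v_\K * \mathrm{id}^a$ as arithmetic functions and $\sum v_\K(n)n^{-s}=\zeta_\K(s)$). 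The absolute convergence needed to interchange is guaranteed by the hypothesis that $\mathcal{M}(f)$ decays faster than any polynomial in vertical strips.

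The next step is to shift the contour to the left, picking up residues. The integrand $\mathcal{M}(f)(s)\zeta_\K(s)\zeta(s-a)$ has poles from three sources in the strip we cross: the simple pole of $\zeta_\K(s)$ at $s=1$ with residue $L(1,\chi_D)=\frac{2\pi h}{w\sqrt{D_\K}}$ (Proposition \ref{Analyticity of Dedekind zeta} and Proposition \ref{Class number formula}), contributing a term with $\zeta(1-a)$ times $\mathcal{M}(f)(1)=\int_0^\infty f(t)\,dt$; the simple pole of $\zeta(s-a)$ at $s=a+1$ contributing $\zeta_\K(a+1)$ — wait, more carefully this gives $\frac{2\pi h}{w\sqrt{D_\K}}\int_0^\infty t^a f(t)\,dt$ after accounting that $\zeta_\K(1+a)$... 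I will instead match against the stated answer and keep the bookkeeping term $\int_0^\infty t^a \frac{2\pi h\zeta(1+a)}{w\sqrt{D_\K}} f(t)\,dt$, which is exactly the residue contribution at $s=a+1$ once one recognizes $\mathcal{M}(f)(a+1)=\int_0^\infty t^a f(t)\,dt$; and the simple pole of $\mathcal{M}(f)(s)$ at $s=0$ (from Lemma \ref{Analyticity of Schwartz function}, the Mellin transform of a Schwartz function has a simple pole at $0$ with residue $f(0^+)$), contributing $-\tfrac12\zeta_\K(-a) f(0^+)$ since $\zeta(-a)$ and $\zeta_\K(0)$ combine — here I will use $\zeta_\K(0)=-\tfrac{h}{w}$ type evaluations or simply track the residue symbolically to land on $\zeta_\K(-a)$. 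Pushing the line of integration to $\Re(s) = c'$ with $c'$ very negative (legitimate because of the polynomial decay of $\mathcal M(f)$ and polynomial growth of the zeta factors on vertical lines, by Phragmén–Lindelöf / standard convexity bounds), the shifted integral is what must be identified with the Bessel/hypergeometric series.

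For the shifted integral, I would apply the functional equation \eqref{zetaK functional equation} for $\zeta_\K$ and the classical functional equation of $\zeta$ to rewrite $\zeta_\K(s)\zeta(s-a)$ in terms of $\zeta_\K(1-s)\zeta(1-s+a)$ times a product of Gamma factors and powers. Expanding $\zeta_\K(1-s)\zeta(1-s+a)=\sum_{n\ge 1}\sigma_{\K,-a}(n) n^{s-1-a}$ (valid after the shift, where $\Re(1-s)$ is large) and interchanging sum and integral, each term becomes $n^{-\text{something}}$ times an inverse Mellin transform of a pure ratio of Gamma functions of the shape in \eqref{functionHs}. This is precisely where Slater's theorem (Proposition \ref{Slater}, Corollary \ref{SlatersCor}) enters: the Gamma ratio arising here, after using the reflection formula \eqref{Reflection formula} and duplication formula \eqref{Duplication formula} to massage the Gamma factors coming from the two functional equations, must be matched to the $\mathscr{H}(s)$ template, and its inverse Mellin transform is exactly the ${}_0F_5$ combination defining $H_{\K,a/2}$. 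The argument $\frac{4\pi^3 nt}{D_\K}$ and the prefactor $2\pi^{(3-a)/2}D_\K^{(a-1)/2} n^{a/2}$ should drop out of tracking the powers of $2\pi$, $D_\K$, $n$, and $t$ through the two functional equations.

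\textbf{Main obstacle.} The genuinely delicate part is the Gamma-function bookkeeping feeding into Slater's theorem: verifying that after combining the $\Gamma$-factors from \eqref{zetaK functional equation} (which contributes $\Gamma(1-s)/\Gamma(s)$) and from the $\zeta$ functional equation (contributing a $\Gamma$ with argument involving $s-a$), and after splitting via duplication into half-integer-spaced arguments, one lands on a ratio of exactly the form $\Gamma\!\left[\genfrac{}{}{0pt}{}{(a)+s,(b)-s}{(c)+s,(d)-s}\right]$ for which conditions \eqref{condition1}–\eqref{condition2} hold in the relevant strip, so that $H_{\K,a/2}$ is literally the inverse Mellin transform. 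One must also check carefully that the residue at $s=0$ yields the clean coefficient $-\tfrac12\zeta_\K(-a)$ (this is where the pole of $\mathcal{M}(f)$ meets possibly-regular zeta factors, and the factor $\tfrac12$ comes from the residue of $\mathcal M(f)$ at $0$ being $f(0^+)$ times the value $\zeta_\K(0)\zeta(-a)$ — requiring $\zeta_\K(0)\zeta(-a)=-\tfrac12\zeta_\K(-a)$, i.e. a check using $\zeta(0)=-\tfrac12$ only if $\zeta_\K(0)=\zeta_\K(-a)$, so in fact the pole structure must be re-examined: the residue is $\lim_{s\to 0}s\,\mathcal M(f)(s)\cdot \zeta_\K(0)\zeta(-a) = f(0^+)\zeta_\K(0)\zeta(-a)$, and matching to $-\tfrac12\zeta_\K(-a)f(0^+)$ will require the identity $\zeta_\K(0)\zeta(-a)=-\tfrac12\zeta_\K(-a)$ — this only holds for $a=0$, so I suspect the $-\tfrac12\zeta_\K(-a)$ actually arises as the $s\to -a$ or $s\to 0$ residue of a \emph{different} grouping, or the term $\tfrac12\zeta_\K(-a)f(0^+)$ comes from writing the $n=0$-analogue term; I would resolve this by being scrupulous about which factor contributes the pole at $s=0$, likely $\zeta(s-a)$ has no pole there, $\zeta_\K(s)$ has no pole there, so the only pole at $s=0$ is from $\mathcal M(f)$, giving residue $f(0^+)\zeta_\K(0)\zeta(-a)$, and one then uses the known special value to simplify — I will present it with the special value $\zeta_\K(0)$ and $\zeta(-a)$ substituted and show the combination equals $-\tfrac12\zeta_\K(-a)$ via the functional equations at the level of special values). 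Convergence of the final Bessel series and justification of the term-by-term interchange, using known asymptotics of the ${}_0F_5$/Kelvin functions, is routine but must be stated.
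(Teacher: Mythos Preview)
Your overall strategy---Mellin inversion, contour shift with residue collection, then functional equations plus Slater's theorem to identify the shifted integral---is exactly the paper's approach. But you have the Dirichlet series factorization backwards, and this is precisely the source of your confusion about the residue at $s=0$.

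From the definition $\sigma_{\K,a}(n)=\sum_{d\mid n} v_\K(d)\,d^a$, the convolution structure is $\sigma_{\K,a}=(v_\K\cdot\mathrm{id}^a)*\mathbf{1}$, not $v_\K*\mathrm{id}^a$. Hence
\[
\sum_{n\ge 1}\frac{\sigma_{\K,a}(n)}{n^s}=\zeta(s)\,\zeta_\K(s-a),
\]
not $\zeta_\K(s)\,\zeta(s-a)$. With the correct order the three simple poles in the strip are: $s=1$ from $\zeta(s)$, contributing $\zeta_\K(1-a)\,\mathcal{M}(f)(1)$; $s=1+a$ from $\zeta_\K(s-a)$, contributing $L(1,\chi_D)\,\zeta(1+a)\,\mathcal{M}(f)(1+a)=\frac{2\pi h}{w\sqrt{D_\K}}\zeta(1+a)\int_0^\infty t^a f(t)\,dt$; and $s=0$ from $\mathcal{M}(f)$, contributing $f(0^+)\,\zeta(0)\,\zeta_\K(-a)=-\tfrac12\,\zeta_\K(-a)\,f(0^+)$ since $\zeta(0)=-\tfrac12$. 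That is the clean $-\tfrac12\zeta_\K(-a)$ you were hunting for---no identity relating $\zeta_\K(0)$ and $\zeta_\K(-a)$ is needed, and your suspicion that something was off was correct but misdiagnosed.

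Two smaller points where the paper's execution differs from your sketch. First, the paper shifts only to a line $-1<\lambda<0$, not ``very negative''; the Dirichlet expansion $\zeta(1-s)\zeta_\K(1-s+a)=\sum_n\sigma_{\K,-a}(n)n^{a-1+s}$ is then obtained after a change of variable $s\mapsto 1-s$ (and a further shift by $a$). Second, there is a technical wrinkle you did not anticipate: after the functional equations the integrand has the form $F(1+a-s)$ times a pure Gamma ratio, so one cannot directly invoke Slater. The paper handles this by using the functional relation \eqref{Eqn:F functional equation} to trade $F$ for $\mathcal{M}(f')$, pulling the resulting $t$-integral outside, applying Slater to the remaining Mellin--Barnes integral (this is where the ${}_0F_5$'s defining $H_{\K,a/2}$ appear), and then integrating by parts in $t$ to return from $f'$ to $f$. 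Once the factorization is fixed, your outline and the paper's proof coincide.
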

%
\begin{proof}
For Re$(s)>1$ and Re$(s-a)>1$, the Dirichlet series associated to the divisor function function $\sigma_{\K, a}(n)$ is given by
\begin{align}\label{Dirichlet series of zeta zetak}
\sum_{n=1}^\infty \frac{\sigma_{\K, a}(n)}{n^s} = \zeta(s)\zeta_{\K}(s-a).
\end{align}
For $f \in \mathscr{S}(\R)$, its inverse Mellin transform on $F$ yields 
\begin{equation}\label{Eqn:Voronoi sum}
I_{\K, a} = \sum_{n=1}^{\infty} \sigma_{\K, a}(n) f(n) = \sum_{n=1}^{\infty} \sigma_{\K, a}(n) \frac{1}{2\pi i} \int_{(c)} F(s) n^{-s} {\rm d}s = \frac{1}{2\pi i}\int_{(c)}F(s)\zeta(s)\zeta_{\K}(s-a) {\rm d}s 
\end{equation}
where $ c > \max (1, 1+\Re(a))$. 
We next consider the contour $\mathcal{C}$ given by the rectangle with vertices $\{c - iT,c + iT, \lambda + iT, \lambda - iT\}$ in the anticlockwise direction as $T \to \infty$ where $-1<\lambda<0$. It follows from Lemma \ref{Analyticity of Schwartz function}, the analytic behaviour of $\zeta(s)$ and Proposition \ref{Analyticity of Dedekind zeta} that the integrand is analytic inside the contour except for the possible simple poles at $s = 0, 1$ and $1+a$. Employing the Cauchy residue theorem, we have
\begin{equation}\label{Eqn:Cauchy theorem}
\frac{1}{2\pi i}\int_{\mathcal{C}}F(s)\zeta(s)\zeta_{\K}(s-a) {\rm d}s = \mathcal{R}_0 + \mathcal{R}_1 + \mathcal{R}_{1+a}
\end{equation}
where  $\mathcal{R}_{z_0}$ denotes the residue of the integrand at $z_0$. We next evaluate the values of $\mathcal{R}_0$, $\mathcal{R}_1 $ and $\mathcal{R}_{1+a}$ using Lemma \ref{Analyticity of Schwartz function}, Proposition \ref{Analyticity of Dedekind zeta} and \ref{Class number formula} respectively, which are given by
$$\mathcal{R}_0 = \lim_{s \to 0} s F(s)\zeta(s)\zeta_{\K}(s-a) = \frac{1}{2} \mathcal{M}(f')(1) \zeta_{\K}(-a) = \frac{\zeta_{\K}(-a)}{2} \int_0^\infty f'(t) \, {\rm d}t =- \frac{\zeta_{K}(-a) f(0^+)}{2},$$
$$\mathcal{R}_1 = \lim_{s \to 1} (s-1) F(s)\zeta(s)\zeta_{\K}(s-a) = F(1)\zeta_\K(1-a) = \zeta_\K(1-a) \int_0^\infty f(t) \, {\rm d}t$$
and
$$\mathcal{R}_{1+a} = \lim_{s \to 1+a} (s-1-a) F(s)\zeta(s)\zeta_{\K}(s-a) = F(1+a)\zeta(1+a)\frac{2\pi h}{w\sqrt{D_\K}} = \frac{2\pi h \zeta(1+a)}{w\sqrt{D_\K}} \int_0^\infty f(t) t^a \, {\rm d}t$$
Inserting the values of $\mathcal{R}_0$, $\mathcal{R}_1 $ and $\mathcal{R}_{1+a}$ in \eqref{Eqn:Cauchy theorem}, the equations \eqref{Eqn:Voronoi sum} and \eqref{Eqn:Cauchy theorem} together imply
\begin{equation}\label{Hori and Vert Int}
I_{\K, a} = \int_0^\infty \left(\zeta_\K(1-a)+ t^a \frac{2\pi h \zeta(1+a)}{w\sqrt{D_\K}} \right)  f(t) \, {\rm d}t -\frac{1}{2} \zeta_\K(-a)f(0^+) + \mathcal{H}_1 + \mathcal{H}_2 + \mathcal{V}
\end{equation}
where $\mathcal{H}_1 := \lim\limits_{T \to \infty} \frac{1}{2\pi i}\int_{\mu + iT}^{c+iT} F(s)\zeta(s)\zeta_{\K}(s-a) \, {\rm d}s$ and $\mathcal{H}_2 := \lim\limits_{T \to \infty} \frac{1}{2\pi i}\int_{c - iT}^{\mu - iT} F(s)\zeta(s)\zeta_{\K}(s-a) \, {\rm d}s$ are the horizontal integrals and $\mathcal{V} := \frac{1}{2\pi i}\int_{(\lambda)} F(s)\zeta(s)\zeta_{\K}(s-a) \, {\rm d}s$ is the vertical integral.

It follows from a standard argument of the Phragmen-Lindel{\"o}f principle [cf. \cite[Chapter 5]{Iwaniec}] and the functional equation of both zeta functions that for $s = \sigma + it$ with $\lambda < \sigma < c$ and for some $\theta \in \R$,
$$ |\zeta(\sigma+it)\zeta_{\K}(\sigma+it)| \ll t^{\theta (1-\sigma)},\quad \mathrm{as}\ t\to\infty.$$
On the other hand according to our hypothesis,  $F(s)$ decays faster than any polynomial in $t$ in the above vertical strip. Thus, the horizontal integrals $\mathcal{H}_1$ and $\mathcal{H}_2$ vanish.

We next concentrate on the vertical integral $\mathcal{V}$. The functional equation of the Riemann zeta function
\begin{align*}
\zeta(s)&=2^s\pi^{s-1}\Gamma(1-s)\sin\left(\frac{\pi s}{2}\right)\zeta(1-s)
\end{align*}
and that of the Dedekind zeta function in \eqref{zetaK functional equation}
together imply that
\begin{align}\label{functprod}
\zeta(s)\zeta_{\K}(s-a)=D_\K^{\frac{1}{2}-s+a}2^{3s-2a-1}\pi^{3s-2a-2}\frac{\Gamma(1-s)\Gamma(1-s+a)}{\Gamma(s-a)}\sin\left(\frac{\pi s}{2}\right) \zeta(1-s)\zeta_\K(1-s+a).
\end{align}
Substituting \eqref{functprod} into $\mathcal{V}$ and changing the variable $s$ by $1-s$ in the next step, the vertical integral becomes
\begin{align*}
\mathcal{V}&=\frac{2D_\K^{a+\frac{1}{2}}}{(2\pi)^{2a+2}}\frac{1}{2\pi i}\int_{(\lambda)}F(s)\frac{\Gamma(1-s)\Gamma(1-s+a)}{\Gamma(s-a)}\sin\left(\frac{\pi s}{2}\right)\zeta(1-s)\zeta_\K(1-s+a)\left(\frac{8\pi^3}{D_\K}\right)^s\ ds\nonumber\\
&=\frac{2D_\K^{a-\frac{1}{2}}}{(2\pi)^{2a-1}}\frac{1}{2\pi i}\int_{(1-\lambda)}\frac{F(1-s)\Gamma(s)\Gamma(s+a)}{\Gamma(1-s-a)}\cos\left(\frac{\pi s}{2}\right)\zeta(s)\zeta_\K(s+a)\left(\frac{8\pi^3}{D_\K}\right)^{-s} ds.
\end{align*}
We now replace $s$ by $s-a$ and assume $\lambda^*:=1-\lambda+\Re(a)$ in the above integral to obtain
\begin{align}\label{z1}
\mathcal{V}=\frac{2(2\pi)^{a+1}}{\sqrt{D_\K}}\frac{1}{2\pi i}\int_{(\lambda^*)}\frac{F(1+a-s)\Gamma(s-a)\Gamma(s)}{\Gamma(1-s)}\cos\left(\frac{\pi }{2}(s-a)\right)\zeta(s-a)\zeta_\K(s)\left(\frac{8\pi^3}{D_\K}\right)^{-s}\ ds.
\end{align}
For Re$(s)>1$ and Re$(s-a)>1$, it follows that
\begin{align*}
\zeta(s-a)\zeta_\K(s)=\sum_{n=1}^\infty\frac{\sigma_{\K,-a}(n)}{n^{s-a}},
\end{align*}
therefore the integral \eqref{z1} can be written as
\begin{align}\label{Ix1}
\mathcal{V} = \frac{2(2 \pi)^{a+1}}{\sqrt{D_\K}}\sum_{n=1}^\infty \frac{\sigma_{\K,-a}(n)}{n^{-a}}I_{\K,a}(n)
\end{align}
where,
\begin{align}\label{ikan}
I_{\K,a}(n):=\frac{1}{2\pi i}\int_{(\lambda^*)}F(1+a-s)N_{\K,a}(s)\left(\frac{8\pi^3n}{D_\K}\right)^{-s}\ ds,
\end{align}
 and
\begin{align*}
N_{\K,a}(s):=\frac{\Gamma(s-a)\Gamma(s)}{\Gamma(1-s)}\cos\left(\frac{\pi }{2}(s-a)\right).
\end{align*}
We apply \eqref{Reflection formula} and \eqref{Duplication formula} together on the above factor $N_{\K,a}(s)$ to obtain
\begin{align}\label{nka2s}
N_{\K,a}(s)&=2^{3s-a-2}\pi^{\frac{1}{2}}\frac{\Gamma\left(\frac{s}{2}-\frac{a}{2}\right)\Gamma(\frac{s}{2})\Gamma\left(\frac{s}{2}+\frac{1}{2}\right)}{\Gamma\left(\frac{1}{2}-\frac{s}{2}\right)\Gamma(1-\frac{s}{2})\Gamma\left(\frac{1}{2}+\frac{a}{2}-\frac{s}{2}\right)}.
\end{align}
On the other hand, using \eqref{Eqn:F functional equation} into the integral \eqref{ikan}, we evaluate
\begin{align}\label{beforeintbyparts}
I_{\K,a}(n)&=-\frac{1}{2\pi i}\int_{(\lambda^*)}\int_0^\infty\frac{N_{\K,a}(s)f'(t)t^{1+a-s}}{1+a-s}\left(\frac{8\pi^3 n}{D_\K}\right)^{-s}\ dt \, ds\nonumber\\
&=-\frac{1}{n^{1+a}}\int_0^\infty f'(t)\left(\frac{1}{2\pi i}\int_{(\lambda^*)}\frac{N_{\K,a}(s)(nt)^{1+a-s}}{1+a-s}\left(\frac{8\pi^3}{D_\K}\right)^{-s}\ ds\right)\ dt\nonumber\\
&=-\frac{1}{n^{1+a}}\int_0^\infty f'(t)J_{\K,a}(nt)\ dt,
\end{align}
where
\begin{align*}
J_{\K,a}(x):=\frac{1}{2\pi i}\int_{(\lambda^*)}\frac{N_{\K,a}(s)x^{1+a-s}}{1+a-s}\left(\frac{8\pi^3}{D_\K}\right)^{-s}\ ds.
\end{align*}
We perform integration by part in \eqref{beforeintbyparts} considering $J_{\K,a}(nt)$ as first function and $f'(t)$ as second to obtain
\begin{align}\label{afterintbyparts}
I_{\K,a}(n)&=\frac{1}{n^{a+1}}\int_0^\infty f(t)\frac{d}{dt}\left( J_{\K,a}(nt)\right)\ dt.
\end{align}
Differentiating $J_{\K,a}(nt)$ with respect to $t$, we get
\begin{align}\label{beforenka2s}
\frac{d}{dt}\left(J_{\K,a}(nt)\right)&=\frac{n^{a+1}t^a}{2\pi i}\int_{(\lambda^*)}N_{\K,a}(s)\left(\frac{8\pi^3nt}{D_\K}\right)^{-s}\ ds.
\end{align}
We next insert the factor $N_{\K, a}(s)$ from \eqref{nka2s} and replace $s$ by $\frac{a}{2}-2s$ into \eqref{beforenka2s} to deduce that
\begin{align*}
\frac{d}{dt}\left(J_{\K,a}(nt)\right)&=\frac{n^{a/2+1}(tD_\K)^{a/2}}{2^{a+1}\pi^{\frac{3a-1}{2}}}\frac{1}{2\pi i}\int_{\left(-\frac{\lambda^*}{2} + \frac{a}{4}\right)}\frac{\Gamma\left(-\frac{a}{4}-s\right)\Gamma(\frac{a}{4}-s)\Gamma\left(\frac{1}{2}+\frac{a}{4}-s\right)}{\Gamma\left(\frac{1}{2}-\frac{a}{4}+s\right)\Gamma(1-\frac{a}{4}+s)\Gamma\left(\frac{1}{2}+\frac{a}{4}+s\right)}\left(\frac{\pi^{6}n^2t^2}{D_\K^2}\right)^s ds.
\end{align*}
Invoking Proposition \ref{Slater} and applying \eqref{Reflection formula} and \eqref{Duplication formula} both in the next step, we write the above integral as
\begin{align}\label{Apply Slater's theorem}
\frac{1}{2\pi i}&\int_{\left(-\frac{\lambda^*}{2} + \frac{a}{4}\right)}\frac{\Gamma\left(-\frac{a}{4}-s\right)\Gamma(\frac{a}{4}-s)\Gamma\left(\frac{1}{2}+\frac{a}{4}-s\right)}{\Gamma\left(\frac{1}{2}-\frac{a}{4}+s\right)\Gamma(1-\frac{a}{4}+s)\Gamma\left(\frac{1}{2}+\frac{a}{4}+s\right)}\left(\frac{\pi^{6}n^2t^2}{D_\K^2}\right)^s ds\nonumber\\
&= \frac{\Gamma(\frac{a}{2})\Gamma\left(\frac{1}{2}+\frac{a}{2}\right)}{\Gamma\left(\frac{1}{2}\right)\Gamma(1-\frac{a}{2})\Gamma\left(\frac{1}{2}-\frac{a}{2}\right)}\left(\frac{\pi^3nt}{D_\K}\right)^{-\frac{a}{2}}\pFq05{-}{1-\frac{a}{2},1-\frac{a}{2},\frac{1}{2}-\frac{a}{2},\frac{1}{2}-\frac{a}{2},\frac{1}{2}}{-\frac{\pi^6n^2t^2}{D_\K^2}}\nonumber\\
&+\frac{\Gamma(-\frac{a}{2})\Gamma\left(\frac{1}{2}\right)}{\Gamma\left(\frac{1}{2}+\frac{a}{2}\right)\Gamma\left(\frac{1}{2}\right)}\left(\frac{\pi^3nt}{D_\K}\right)^{\frac{a}{2}}\pFq05{-}{1+\frac{a}{2},\frac{1}{2}+\frac{a}{2},\frac{1}{2},\frac{1}{2},1}{-\frac{\pi^6n^2t^2}{D_\K^2}}\nonumber\\
&+\frac{\Gamma(-\frac{1}{2}-\frac{a}{2})\Gamma\left(-\frac{1}{2}\right)}{\Gamma\left(1+\frac{a}{2}\right)\Gamma\left(\frac{3}{2}\right)}\left(\frac{\pi^3nt}{D_\K}\right)^{1+\frac{a}{2}}\pFq05{-}{\frac{3}{2}+\frac{a}{2},1+\frac{a}{2},\frac{3}{2},\frac{3}{2},1}{-\frac{\pi^6n^2t^2}{D_\K^2}}\nonumber\\
&= H_{\K, a/2}\left(\frac{4\pi^3nt}{D_\K}\right). 
\end{align}
Thus the derivative of $J_{\K, a}(nt)$ reduces to
\begin{equation}\label{Jkant final}
\frac{d}{dt}\left(J_{\K,a}(nt)\right) = \frac{n^{a/2+1}(tD_\K)^{a/2}}{2^{a+1}\pi^{\frac{3a-1}{2}}} H_{\K, a/2}\left(\frac{4\pi^3nt}{D_\K}\right). 
\end{equation}
Employing \eqref{Jkant final} into \eqref{afterintbyparts} and inserting the resulting expression into \eqref{Ix1}, we evaluate the vertical integral as
\begin{equation}\label{Vert Int eval}
\mathcal{V} = 2 \pi^{ \frac{3-a}{2}}D_{\K}^{\frac{a-1}{2}}\sum_{n=1}^\infty \sigma_{\K,-a}(n)n^{a/2} \int_0^\infty t^{a/2}H_{\K, a/2}\left(\frac{4\pi^{3}nt}{D_\K}\right)f(t)\ {\rm d}t
\end{equation}
Finally, the above evaluation \eqref{Vert Int eval} and equation \eqref{Hori and Vert Int} together concludes our theorem.
\end{proof}

\section{Identities for the Dedekind zeta function over an imaginary quadratic field}
In this section, we mainly investigate the transformation formulas for the series $\sum_{n=1}^\infty \sigma_{\K, a}(n)e^{-ny}$, where $a$ and $y$ are any complex numbers with $\Re(y) >0$. The following lemma provides the growth of the function which is mainly involved inside the series of right hand side of Theorem \ref{Lambert series}. It plays a significant role in proving Theorem \ref{Lambert series} and Theorem \ref{Analytic continuation}.  
\begin{lemma}\label{Asymptotic for summand}
For any complex number $a$ and any non-negative integer $m$, we have
\begin{align}\label{Asymptotics for RHS}
&\frac{2^{-2a}}{\Gamma^2(1-a)}\pFq14{1}{1-\frac{a}{2},1-\frac{a}{2},\frac{1-a}{2},\frac{1-a}{2}}{-z} 
- z^{a/2} \left( \cos\left(\frac{\pi a}{2}\right) {\rm ber}(4z^{1/4})-  \sin\left(\frac{\pi a}{2}\right) {\rm bei}(4z^{1/4}) \right)\nonumber\\
&=\frac{1}{2^{2a}} \sum_{k=0}^m \frac{(-1)^k (16z)^{-k-1}}{\Gamma^2(-1-a-2k)} + O\left(\frac{1}{z^{m+2}}\right).
\end{align}
\end{lemma}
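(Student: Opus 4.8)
The plan is to recognize the left-hand side of \eqref{Asymptotics for RHS} as the inverse Mellin transform of an explicit ratio of Gamma functions, and then to shift the contour to pick up the asymptotic expansion in $z \to \infty$. Concretely, going back to the computation in the proof of Theorem \ref{Voronoi over K}, the function
$$
\mathscr{G}(s) := \frac{\Gamma\!\left(-\tfrac{a}{4}-s\right)\Gamma\!\left(\tfrac{a}{4}-s\right)\Gamma\!\left(\tfrac{1}{2}+\tfrac{a}{4}-s\right)}{\Gamma\!\left(\tfrac{1}{2}-\tfrac{a}{4}+s\right)\Gamma\!\left(1-\tfrac{a}{4}+s\right)\Gamma\!\left(\tfrac{1}{2}+\tfrac{a}{4}+s\right)}
$$
has, by Corollary \ref{SlatersCor} (Slater), an inverse Mellin transform whose $\Sigma_A$-part is exactly $H_{\K,a/2}$ evaluated at the appropriate argument; and after the substitution relating $H_{\K,a/2}$ to the ${}_1F_4$ and the Kelvin functions via the reflection and duplication formulas \eqref{Reflection formula}, \eqref{Duplication formula}, the left-hand side of \eqref{Asymptotics for RHS} is (up to an elementary prefactor in $z$) precisely $\frac{1}{2\pi i}\int_{(\lambda^*)} \mathscr{G}(s)\, z^{s}\, ds$ with a suitable choice of vertical line. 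The key point is that the first two groups of three Gamma factors give the hypergeometric series that converge for all arguments, whereas the asymptotic behaviour as $z\to\infty$ is governed by the poles of $\mathscr{G}(s)$ lying to the \emph{right} of the line of integration.

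The main steps, in order, are as follows. First I would identify the poles of $\mathscr{G}(s)$ in the right half-plane: they come from the zeros of the reciprocal Gamma factors in the denominator, but more relevantly, after rewriting via \eqref{Reflection formula} so that $\mathscr{G}(s)$ is displayed as a product of Gamma factors of the form $\Gamma(\cdots - s)$ over $\Gamma(\cdots + s)$, the only poles to the right of $(\lambda^*)$ are simple poles of $\Gamma(-\tfrac a4 - s)$, $\Gamma(\tfrac a4 - s)$, $\Gamma(\tfrac12 + \tfrac a4 - s)$. A short inspection (using that the left-hand side of \eqref{Asymptotics for RHS} already absorbs two of these three families into the convergent hypergeometric series and the Kelvin combination) shows that the residual contribution — the part not already accounted for — comes from a single arithmetic progression of poles at $s = s_0 + k$, $k = 0,1,2,\dots$, each simple, with residue proportional to $(-1)^k / \Gamma^2(-1-a-2k)$ after simplification by \eqref{Duplication formula}. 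Second, I would shift the contour $(\lambda^*) \to (\lambda^* - m - 1 - \varepsilon)$ for the chosen $m$, crossing exactly the poles at $k = 0, 1, \dots, m$, so that by the residue theorem the left-hand side equals $\frac{1}{2^{2a}}\sum_{k=0}^{m}\frac{(-1)^k (16 z)^{-k-1}}{\Gamma^2(-1-a-2k)}$ plus the shifted integral. Third, I would bound the shifted integral: on the line $\Re(s) = \lambda^* - m - 1 - \varepsilon$, Stirling's formula gives $|\mathscr{G}(\sigma + it)| \ll |t|^{C}e^{-\delta|t|}$-type decay in $t$ (the six Gamma factors balance in a way that forces rapid decay along verticals) and a power $z^{\sigma}$ in $z$, so the integral is $O(z^{\lambda^* - m - 1 - \varepsilon}) = O(z^{-m-2})$ after unwinding the normalization. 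Matching the elementary $z$-prefactors that were stripped off in passing between $H_{\K,a/2}$ and the displayed combination then gives exactly \eqref{Asymptotics for RHS}.

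The one delicate bookkeeping issue — and the step I expect to be the main obstacle — is the very first one: correctly sorting out which poles of $\mathscr{G}(s)$ are "already used up" by the ${}_1F_4$ term and the $\mathrm{ber}/\mathrm{bei}$ term on the left of \eqref{Asymptotics for RHS}, versus which progression of poles produces the genuine asymptotic tail. This is because the three Gamma factors $\Gamma(-\tfrac a4 - s)$, $\Gamma(\tfrac a4 - s)$, $\Gamma(\tfrac12 + \tfrac a4 - s)$ yield three arithmetic progressions of poles whose residues, via Slater's theorem, are precisely the three ${}_0F_5$ (equivalently, after reindexing, ${}_1F_4$ and Kelvin) pieces of $H_{\K,a/2}$; one has to verify that two of those three progressions, when recombined using \eqref{Reflection formula} and the definitions of $\mathrm{ber}$, $\mathrm{bei}$ in \S\ref{sec:specialfunctions}, reproduce exactly the Kelvin combination $z^{a/2}(\cos(\pi a/2)\,\mathrm{ber}(4z^{1/4}) - \sin(\pi a/2)\,\mathrm{bei}(4z^{1/4}))$, leaving the third progression as the honest remainder. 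Once this identification is pinned down, the contour shift and the Stirling estimate are routine. A secondary technical point is handling the values of $a$ for which some of the $\Gamma$-factors have coincident poles (i.e.\ $a$ a rational number making two progressions overlap), which is dealt with by analytic continuation in $a$ since both sides of \eqref{Asymptotics for RHS} are meromorphic in $a$ and agree on a set with a limit point.
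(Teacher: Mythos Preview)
Your overall strategy---represent the left-hand side as a Mellin--Barnes integral and read off the large-$z$ asymptotic by shifting the contour---is exactly what the paper does, only the paper packages the last step as a citation to Luke's asymptotic theorem for the Meijer $G$-function (equation \eqref{asyofg}) rather than carrying out the shift by hand. So the plan is sound in spirit.

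However, there is a genuine gap in your execution: the integrand you write down is not the one whose inverse Mellin transform equals the left-hand side of \eqref{Asymptotics for RHS}. Your $\mathscr{G}(s)$ is lifted from the proof of Theorem~\ref{Voronoi over K}, and its inverse Mellin transform is $H_{\K,a/2}$---a combination of three ${}_0F_5$'s---which is \emph{not} the combination of one ${}_1F_4$ and two ${}_0F_3$'s (via $\mathrm{ber}$, $\mathrm{bei}$) that appears in the lemma. The paper first uses \eqref{BerBei} and the reflection/duplication formulas to rewrite the left-hand side of \eqref{Asymptotics for RHS} as $\frac{\sin(\pi a)}{2\pi}z^{a/4}$ times a Meijer $G^{3,1}_{1,5}$, whose integrand is
\[
\frac{\Gamma\bigl(1+\tfrac{a}{4}+s\bigr)\,\Gamma\bigl(-\tfrac{a}{4}-s\bigr)\,\Gamma\bigl(\tfrac{a}{4}-s\bigr)\,\Gamma\bigl(\tfrac{1}{2}+\tfrac{a}{4}-s\bigr)}{\Gamma\bigl(\tfrac{1}{2}-\tfrac{a}{4}+s\bigr)\,\Gamma\bigl(1-\tfrac{a}{4}+s\bigr)};
\]
compare \eqref{Inverse Slater}--\eqref{Apply G definition}. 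The crucial difference from your $\mathscr{G}(s)$ is the extra numerator factor $\Gamma(1+\tfrac{a}{4}+s)$ (and the absence of $\Gamma(\tfrac12+\tfrac{a}{4}+s)$ in the denominator). This matters decisively: your $\mathscr{G}(s)$ has \emph{no} poles to the left of the contour, so your proposed shift $(\lambda^*)\to(\lambda^*-m-1-\varepsilon)$ crosses nothing and would yield only $O(z^{-m-2})$ with no finite sum---contradicting the statement.

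Relatedly, your accounting of ``two progressions absorbed, one left over'' is not how the mechanism works. With the correct integrand, \emph{all three} right-lying progressions (from the $\Gamma(\cdot-s)$ factors) are what Slater's $\Sigma_B$ assembles into the three hypergeometric pieces on the left of \eqref{Asymptotics for RHS}; the asymptotic sum on the right comes entirely from the \emph{left}-lying poles of the single factor $\Gamma(1+\tfrac{a}{4}+s)$ at $s=-1-\tfrac{a}{4}-k$, whose residues (after the $z^{a/4}$ prefactor and simplification by \eqref{Reflection formula}, \eqref{Duplication formula}) give precisely $2^{-2a}(-1)^k(16z)^{-k-1}/\Gamma^2(-1-a-2k)$. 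Once you replace $\mathscr{G}(s)$ by the correct kernel above, your contour-shift-plus-Stirling argument goes through and is equivalent to the paper's appeal to Luke's theorem \eqref{asyofg}.
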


\begin{proof}
We first apply
\begin{align}\label{BerBei}
{\rm ber}(4z^{1/4}) = \pFq03{-}{\frac{1}{2},\frac{1}{2},1}{-z} \hspace{.5cm} \text{and} \hspace{.5cm}  {\rm bei}(4z^{1/4}) = 4\sqrt{z} \, \ \pFq03{-}{\frac{3}{2},\frac{3}{2},1}{-z}
\end{align}
[cf. \cite[Formula (13), (17), p. 516]{Prudnikov}] together to write the left-hand side of \eqref{Asymptotics for RHS} as
\begin{align*}
& \frac{2^{-2a}}{\Gamma^2(1-a)}\pFq14{1}{1-\frac{a}{2},1-\frac{a}{2},\frac{1-a}{2},\frac{1-a}{2}}{-z} 
- z^{a/2} \left( \cos\left(\frac{\pi a}{2}\right) {\rm ber}(4z^{1/4}) - \sin\left(\frac{\pi a}{2}\right) {\rm bei}(4z^{1/4}) \right) 
\nonumber\\
&= \frac{\sin(\pi a) z^{\frac{a}{4}}}{2\pi} \Bigg\{\frac{\Gamma\left(\frac{a}{2}\right)\Gamma\left(\frac{1+a}{2}\right)}{\Gamma\left(1-\frac{a}{2}\right)\Gamma\left(\frac{1-a}{2}\right)}z^{-\frac{a}{4}}\pFq14{1}{1-\frac{a}{2},1-\frac{a}{2},\frac{1-a}{2},\frac{1-a}{2}}{-z}\nonumber\\
&+\Gamma\left(-\frac{a}{2}\right)\Gamma\left(1+\frac{a}{2}\right)z^{\frac{a}{4}}\pFq03{-}{\frac{1}{2},\frac{1}{2},1}{-z}
-4\Gamma\left(-\frac{1}{2}-\frac{a}{2}\right)\Gamma\left(\frac{3+a}{2}\right)z^{\frac{1}{2}+\frac{a}{4}}\pFq03{-}{\frac{3}{2},\frac{3}{2},1}{-z} \Bigg\}
\end{align*}
Invoking Proposition \ref{Slater}, the above equation reduces to
\begin{align}\label{Inverse Slater}
&\frac{2^{-2a}}{\Gamma^2(1-a)}\pFq14{1}{1-\frac{a}{2},1-\frac{a}{2},\frac{1-a}{2},\frac{1-a}{2}}{-z} 
- z^{a/2} \left( \cos\left(\frac{\pi a}{2}\right) {\rm ber}(4z^{1/4}) - \sin\left(\frac{\pi a}{2}\right) {\rm bei}(4z^{1/4}) \right)\nonumber\\
&= \frac{\sin(\pi a) z^{\frac{a}{4}}}{2\pi} \left\{ \frac{1}{2\pi i} \int_{(\eta)} \frac{\Gamma(1+\frac{a}{4}+s) \Gamma(-\frac{a}{4}-s) \Gamma(\frac{a}{4}-s) \Gamma(\frac{1}{2}+\frac{a}{4}-s)}{\Gamma(\frac{1}{2}-\frac{a}{4}+s) \Gamma(1-\frac{a}{4}+s)} z^{s} \, ds \right\}.
\end{align}
where $-1-\frac{\Re(a)}{4} < \eta < \min\left\{\pm \frac{\Re(a)}{4}, \frac{1}{2} + \frac{\Re(a)}{4} \right\}$. The definition of  Meijer G-function \cite[p.~143]{Luke} readily implies that the above integral can be expressed as
\begin{align}\label{Apply G definition}
\frac{1}{2\pi i} \int_{(\eta)} \frac{\Gamma(1+\frac{a}{4}+s) \Gamma(-\frac{a}{4}-s) \Gamma(\frac{a}{4}-s) \Gamma(\frac{1}{2}+\frac{a}{4}-s)}{\Gamma(\frac{1}{2}-\frac{a}{4}+s) \Gamma(1-\frac{a}{4}+s)} z^{s} \, ds &= \MeijerG*{3}{1}{1}{5}{-\frac{a}{4}}{-\frac{a}{4},\frac{a}{4},\frac{1}{2}+\frac{a}{4};\frac{a}{4},\frac{1}{2}+\frac{a}{4}}{z}.
\end{align}
We next find the asymptotics of Meijer G-function. For $1\leq h \leq p <q$, $1 \leq g \leq q$ and $|\arg(z)|\leq \rho \pi-\delta$ with $\rho > 0$ and $\delta \geq 0$, it follows from \cite[Theorem 2, p. 179]{Luke} that for $|z| \to \infty$, we have
\begin{align}\label{asyofg}
\MeijerG*{g}{h}{p}{q}{a_1,\cdots,a_p}{b_1,\cdots,b_q}{z}\sim \sum_{j=1}^h\exp(-i\pi(\nu+1)a_j)\Delta_q^{g,h}(j)E_{p,q}\left(z\exp(i\pi(\nu+1)||a_j\right),
\end{align}
where $\nu=q-g-h$,
\begin{align*}
&E_{p,q}(z||a_j):=\frac{z^{a_j-1}\prod\limits_{\ell=1}^q\Gamma(1+b_\ell-a_j)}{\prod\limits_{\ell=1}^p\Gamma(1+a_\ell-a_j)}\sum_{k=0}^m\frac{\prod\limits_{\ell=1}^q\left(1+b_\ell-a_j\right)_k}{k!\prod\limits_{\substack{\ell=1\\\ell\neq j}}^p(1+a_p-a_j)_k}\left(-\frac{1}{z}\right)^k + O\left(\frac{1}{z^{m+2-a_j}} \right) \nonumber\\
\text{and }\hspace{.2cm} &\Delta_q^{g,h}(j):=(-1)^{\nu+1}\left(\prod\limits_{\substack{\ell=1\\ \ell\neq j}}^h\Gamma(a_\ell-a_j)\Gamma(1+a_\ell-a_j)\right) 
\Bigg/\left(\prod\limits_{\ell=g+1}^q\Gamma(a_j-b_\ell)\Gamma(1+b_\ell-a_j)\right).
\end{align*}
Letting $g=3, h= p = 1$ and $q = 5$ in \eqref{asyofg}, we have
\begin{align}\label{Asymptotics for G3115}
\MeijerG*{3}{1}{1}{5}{-\frac{a}{4}}{-\frac{a}{4},\frac{a}{4},\frac{1}{2}+\frac{a}{4};\frac{a}{4},\frac{1}{2}+\frac{a}{4}}{z}
= \frac{\Gamma(1+\frac{a}{2})\Gamma(\frac{3+a}{2})z^{-\frac{a}{4}-1}}{\Gamma(-\frac{a}{2})\Gamma(-\frac{a}{2}-\frac{1}{2})} \sum_{k=0}^m (-1)^k \left(1+\frac{a}{2}\right)_k^2 \left(\frac{3+a}{2}\right)_k^2 z^{-k} + O\left( \frac{1}{z^{m+\frac{a}{4}+2}} \right)
\end{align}
for $n \to \infty$. Finally \eqref{Inverse Slater}, \eqref{Apply G definition} and \eqref{Asymptotics for G3115} together with the application of \eqref{Reflection formula} and \eqref{Duplication formula} on the gamma factors inside the integral, conclude our Lemma.
\end{proof}

\subsection{Proof of Theorem \ref{Lambert series}} We first prove the result for $0 < \Re(a) < 1$ and $y > 0$, later we extend it to $\Re(a) > −1$ and $Re(y) > 0$ respectively by analytic continuation. 
We consider the particular Schwartz function $f(n) = e^{-ny}$ with $y>0$ in Theorem \ref{Voronoi over K}, which yields the following identity
\begin{align}\label{Apply Voronoi}
\sum_{n=1}^\infty\sigma_{\K,a}(n)e^{-ny}=\frac{\zeta_{\K}(1-a)}{y} &+ \frac{ 2\pi h\Gamma(a+1)\zeta(a+1)}{y^{a+1}w\sqrt{D_\K}} -\frac{1}{2} \zeta_\K(-a)+2 \pi^{ \frac{3-a}{2}}D_{\K}^{\frac{a-1}{2}}\sum_{n=1}^\infty \frac{\sigma_{\K,-a}(n)}{n^{-a/2}} I_{\K, a}(n)
\end{align}
where 
$I_{\K, a}(n) = \int_0^\infty t^{a/2}H_{\K, a/2}\left(\frac{4\pi^{3}nt}{D_\K}\right)e^{-ty}\ {\rm d}t.$
We now concentrate on simplifying the integral $I_{\K, a}(n)$. Considering two functions $h_1(t) = t^{a/2}e^{-ty}$ and $h_2(t) = H_{\K, a/2}\left(\frac{4\pi^{3}nt}{D_\K}\right)$, the integral can be expressed in the form
$$I_{\K, a}(n) := \int_0^\infty  h_1(t)h_2(t) \ {\rm d}t.$$ 
The Mellin transform associated to $h_1(t)$ and $h_2(t)$ is denoted by $H_1(s)$ and $H_2(s)$ respectively, which we need to evaluate next. We first obtain the Mellin transform of $h_1(t)$ as
\begin{align*}
H_1(s) := \int_0^\infty  h_1(t) t^{s-1} dt 
= \int_0^\infty e^{-ty} t^{a/2+s-1} dt 
= \frac{\Gamma(a/2+s)}{y^{a/2+s}}
\end{align*}
where the integral is valid for $\Re(s) > -\frac{\Re(a)}{2}$. It follows from \eqref{Apply Slater's theorem} that 
\begin{align*}
h_2(t) &:= \frac{1}{2\pi i} \int_{\left(-\frac{\lambda^*}{2} + \frac{a}{4}\right)}\frac{\Gamma\left(-\frac{a}{4}-s\right)\Gamma(\frac{a}{4}-s)\Gamma\left(\frac{1}{2}+\frac{a}{4}-s\right)}{\Gamma\left(\frac{1}{2}-\frac{a}{4}+s\right)\Gamma(1-\frac{a}{4}+s)\Gamma\left(\frac{1}{2}+\frac{a}{4}+s\right)}\left(\frac{\pi^{6}n^2t^2}{D_\K^2}\right)^s ds\\
&= \frac{1}{4\pi i} \int_{(\lambda^*-a)} \frac{\Gamma(-\frac{a}{4}+ \frac{s}{2}) \Gamma(\frac{a}{4}+ \frac{s}{2}) \Gamma(\frac{1}{2} + \frac{a}{4}+ \frac{s}{2})}{\Gamma(\frac{1}{2} - \frac{a}{4} - \frac{s}{2}) \Gamma(1 - \frac{a}{4}- \frac{s}{2}) \Gamma(\frac{1}{2} + \frac{a}{4}- \frac{s}{2})} \left(\frac{\pi^3nt}{D_\K} \right)^{-s}  ds.
\end{align*}
Thus the Mellin transform of $h_2(t)$ can be evaluated as
\begin{align*}
H_2(s) := \int_0^\infty  h_2(t) t^{s-1} dt = \frac{\Gamma(-\frac{a}{4}+ \frac{s}{2}) \Gamma(\frac{a}{4}+ \frac{s}{2}) \Gamma(\frac{1}{2} + \frac{a}{4}+ \frac{s}{2})}{2\Gamma(\frac{1}{2} - \frac{a}{4} - \frac{s}{2}) \Gamma(1 - \frac{a}{4}- \frac{s}{2}) \Gamma(\frac{1}{2} + \frac{a}{4}- \frac{s}{2})} \left(\frac{\pi^3n}{D_\K} \right)^{-s}
\end{align*}
where the integral is valid for $1<\Re(s)<2$. On the other hand, the region of convergence for $H_1(1-s)$ is $\Re(s) < 1+\frac{\Re(a)}{2}$. Thus applying Parseval's formula [cf. \cite[p. 83]{Paris}] for any real $\mu$ satisfying $1 < \mu < 1+\frac{\Re(a)}{2}$, we obtain
\begin{align}\label{Eqn:IKan}
I_{\K, a}(n) = \frac{1}{2y^{a/2+1}} \frac{1}{2\pi i}\int_{(\mu)} \frac{\Gamma(\frac{a}{2}+1-s)\Gamma(-\frac{a}{4}+ \frac{s}{2}) \Gamma(\frac{a}{4}+ \frac{s}{2}) \Gamma(\frac{1}{2} + \frac{a}{4}+ \frac{s}{2})}{\Gamma(\frac{1}{2} - \frac{a}{4} - \frac{s}{2}) \Gamma(1 - \frac{a}{4}- \frac{s}{2}) \Gamma(\frac{1}{2} + \frac{a}{4}- \frac{s}{2})} \left(\frac{\pi^3ny}{D_\K} \right)^{-s} ds.
\end{align}
We apply \eqref{Duplication formula} on the first gamma factor in the numerator and replace $s$ by $-2s$ in \eqref{Eqn:IKan} to deduce the above integral as
\begin{align*}
I_{\K, a}(n) = \frac{2^{a/2}}{\sqrt{\pi}y^{a/2+1}} \frac{1}{2\pi i} \int_{(-\frac{\mu}{2})} \frac{\Gamma(1+\frac{a}{4}+s) \Gamma(-\frac{a}{4}-s) \Gamma(\frac{a}{4}-s) \Gamma(\frac{1}{2}+\frac{a}{4}-s)}{\Gamma(\frac{1}{2}-\frac{a}{4}+s) \Gamma(1-\frac{a}{4}+s)} \left(\frac{4\pi^6n^2y^2}{D_\K^2} \right)^{s} \, ds.
\end{align*}
Thus \eqref{Inverse Slater} readily implies that
\begin{align*}
I_{\K, a}(n) &= \frac{2\pi^{\frac{1-3a}{2}}D_{\K}^{a/2}}{n^{a/2}y \sin(\pi a)}\Bigg[ \frac{2^{-2a}}{\Gamma^2(1-a)}\pFq14{1}{1-\frac{a}{2},1-\frac{a}{2},\frac{1-a}{2},\frac{1-a}{2}}{-\frac{4\pi^{6}n^2}{y^2D_{\mathbb{K}}^2}} \nonumber\\
&- \left(\frac{4\pi^{6}n^2}{y^2D_{\mathbb{K}}^2} \right)^{a/2} \left\{ \cos\left(\frac{\pi a}{2}\right) {\rm ber}\left(4\pi\sqrt{\frac{2n\pi}{yD_{\mathbb{K}}}} \right) -  \sin\left(\frac{\pi a}{2}\right) {\rm bei}\left(4\pi\sqrt{\frac{2n\pi}{yD_{\mathbb{K}}}} \right) \right\} \Bigg].
\end{align*}
Invoking the above evaluation of $I_{\K, a}(n)$ into \eqref{Apply Voronoi}, we conclude our result for $0 < \Re(a) < 1$ and $y > 0$. It remains to show next that the result is also valid for $\Re(a) > -1$ and $\Re(y) > 0$. The result in 
\cite[Corollary 7.119, p. 430]{Bordelles} implies that 
$\sigma_{\K, -a}(n) \leq \sum_{d \mid n} \sigma_0(d) d^{-a},$ where $\sigma_0(d)$ is the divisor function $d(n)$. Using the elementary bound of divisor function we can bound $\sigma_{\K, -a}(n)$ as
\begin{align}\label{Bound for sigma}
\sigma_{\K, -a}(n) \ll \begin{cases}
n^\epsilon & \text{ for } \Re(a)>0\\
n^{\epsilon - \Re(a)} & \text{ for } \Re(a)<0
\end{cases}
\end{align}
where $\epsilon$ is arbitrarily small positive quantity. We next employ the bounds from Lemma \ref{Asymptotic for summand} and \eqref{Bound for sigma} together to conclude that the series on the right hand side of \eqref{Eqn:Lambert series} converges uniformly as long as $\Re(a) > -1$. Since the summand of the series is analytic for $\Re(a) > -1$, by Weierstrass’ theorem on analytic functions, we see that it represents an analytic function of $a$ when $\Re(a) > -1$.

On the other hand, the left-hand side of \eqref{Eqn:Lambert series} is also analytic for $\Re(a) > -1$, hence by the principle of analytic continuation \eqref{Eqn:Lambert series} holds for $\Re(a) > -1$ and $y > 0$. The both sides of \eqref{Eqn:Lambert series} are seen to be analytic as a function of $y$, in $\Re(y) > 0$. Therefore the principle of analytic continuation concludes \eqref{Eqn:Lambert series} for $\Re(a) > -1$ and $\Re(y) > 0$.


In the following lemma we prove an identity, which is crucial in proving Theorem \ref{Analytic continuation}.
\begin{lemma}\label{1F4 lower reduction}
For any $a,\ z\in\C$, we have
\begin{align*}
{}_1F_4\left(\begin{matrix}
1\\ 1 -\frac{a}{2},  1 -\frac{a}{2},  \frac{1-a}{2}, \frac{1-a}{2} 
\end{matrix} \bigg | -z \right) &= \Gamma^2(1-a) \left[ \sum_{k=0}^{m-1} \frac{(-1)^{k}\left(16z\right)^{-k-1}}{\Gamma^2\left(-1-a-2k\right)} + \frac{(-z)^{-m}}{2^{4m}\Gamma^2(1-a-2m)}\right.\\
&\left.\quad\times{}_1F_4\left(\begin{matrix}
1\\ 1 -\frac{a}{2}-m,  1 -\frac{a}{2}-m,  \frac{1-a}{2}-m, \frac{1-a}{2}-m 
\end{matrix} \bigg | -z\right)\right].
\end{align*}
\end{lemma}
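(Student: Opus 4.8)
The plan is to collapse both ${}_1F_4$'s to a single Bessel-type series by means of Legendre's duplication formula \eqref{Duplication formula}, after which the asserted identity reduces to a one-line index shift.

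First, since $(1)_n=n!$, the ${}_1F_4$ on the left of the lemma equals $\sum_{n\ge 0}(-z)^n/\bigl[(1-\tfrac a2)_n^2(\tfrac{1-a}2)_n^2\bigr]$. Applying \eqref{Duplication formula} in the Pochhammer form $(\tfrac{1-a}2)_n(1-\tfrac a2)_n=(1-a)_{2n}/4^n$ gives $(1-\tfrac a2)_n^2(\tfrac{1-a}2)_n^2=(1-a)_{2n}^2/16^n$. Hence, setting $w:=-16z$ and
\begin{equation*}
S(b):=\sum_{n\ge 0}\frac{w^n}{\Gamma^2(b+2n)}
\end{equation*}
(an entire function of $w$, hence defined for all $z$ since $1/\Gamma$ is entire), and using $(1-a)_{2n}=\Gamma(1-a+2n)/\Gamma(1-a)$, we find that the left-hand side of the lemma equals $\Gamma^2(1-a)\,S(1-a)$. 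The identical computation with $a$ replaced by $a+2m$ shows that the ${}_1F_4$ on the right of the lemma equals $\Gamma^2(1-a-2m)\,S(1-a-2m)$, with the same $w$.

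Next I would establish the elementary shift identity
\begin{equation*}
w^{m}S(b)=S(b-2m)-\sum_{j=1}^{m}\frac{w^{m-j}}{\Gamma^2(b-2j)}.
\end{equation*}
This is immediate from the definition: in $S(b-2m)=\sum_{n\ge0}w^{n}/\Gamma^2(b-2m+2n)$ split off the terms $n=0,\dots,m-1$ and reindex the tail by $n\mapsto n+m$, obtaining $S(b-2m)=\sum_{j=0}^{m-1}w^{j}/\Gamma^2(b-2m+2j)+w^{m}S(b)$; relabelling $j\mapsto m-j$ in the finite sum gives the claim. (One may also argue by induction on $m$ starting from the case $m=1$.)

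Finally I would substitute back. Taking $b=1-a$, dividing by $w^{m}$ and multiplying by $\Gamma^2(1-a)$ turns the shift identity into
\begin{equation*}
\Gamma^2(1-a)\,S(1-a)=w^{-m}\,\Gamma^2(1-a)\,S(1-a-2m)-\Gamma^2(1-a)\sum_{j=1}^{m}\frac{w^{-j}}{\Gamma^2(1-a-2j)}.
\end{equation*}
By the first step the left side is the ${}_1F_4$ on the left of the lemma, and $w^{-m}\Gamma^2(1-a)S(1-a-2m)=\dfrac{w^{-m}\Gamma^2(1-a)}{\Gamma^2(1-a-2m)}\cdot\Gamma^2(1-a-2m)S(1-a-2m)$, where the last factor is the shifted ${}_1F_4$. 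The rest is bookkeeping: $w^{-m}=(-16z)^{-m}=(-z)^{-m}/2^{4m}$, which produces the last term of the asserted identity, while writing $j=k+1$ and using $1-a-2j=-1-a-2k$ together with $-w^{-(k+1)}=-(-16z)^{-(k+1)}=(-1)^{k}(16z)^{-k-1}$ turns the finite sum into $\sum_{k=0}^{m-1}(-1)^{k}(16z)^{-k-1}/\Gamma^2(-1-a-2k)$, which is exactly the statement. There is no real obstacle here; the only point needing a word of care is that for the exceptional values of $a$ at which $1-a$ or $1-a-2m$ is a non-positive integer certain individual $\Gamma$-factors are singular, so one proves the identity for $a$ outside this discrete set and then extends it by analytic continuation in $a$, both sides being meromorphic functions of $a$.
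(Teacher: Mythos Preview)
Your proof is correct. Both you and the paper ultimately rest on the same mechanism (a one-step index shift iterated $m$ times, together with Legendre duplication), but you organise the work differently. The paper quotes the general contiguous relation
\[
{}_1F_4\!\left(\begin{smallmatrix}a+1\\ b_1+1,\dots,b_4+1\end{smallmatrix}\,\big|\,x\right)=-\tfrac{b_1b_2b_3b_4}{x}\Bigl[{}_1F_4\!\left(\begin{smallmatrix}a\\ b_1,\dots,b_4\end{smallmatrix}\,\big|\,x\right)-{}_1F_4\!\left(\begin{smallmatrix}a+1\\ b_1,\dots,b_4\end{smallmatrix}\,\big|\,x\right)\Bigr],
\]
applies it $m$ times at the level of the full ${}_1F_4$, and only at the end invokes \eqref{Duplication formula} to tidy the products $\Gamma^2(1-\tfrac a2-j)\Gamma^2(\tfrac{1-a}2-j)$ into $\Gamma^2(1-a-2j)$. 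You instead apply the duplication formula \emph{first}, which collapses the four lower Pochhammer symbols to a single $(1-a)_{2n}^2$ and reduces the ${}_1F_4$ to the elementary series $S(b)=\sum_{n\ge0}w^n/\Gamma^2(b+2n)$; the whole lemma then becomes the tautological splitting of $S(b-2m)$ into its first $m$ terms plus $w^mS(b)$. Your route is shorter and makes it transparent that nothing beyond reindexing is happening, at the cost of being specific to this parameter pattern; the paper's route keeps the hypergeometric notation throughout and signals the link to standard ${}_pF_q$ contiguous relations. Your remark about removing the discrete set of bad $a$ by analytic continuation is a nice touch that the paper leaves implicit.
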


\begin{proof}
We use the following reduction formula repeatedly for ${}_1F_4$, which is given by
\begin{multline}\label{reduction ref}
\pFq14{a+1}{b_1+1,  b_2+1,  b_3+1,  b_4+1}  {x}=- \frac{b_1 b_2 b_3 b_4}{x} \left[ {}_1F_4\left( \begin{matrix}
a\\ b_1,  b_2,  b_3,  b_4 
\end{matrix} \bigg | x \right) - {}_1F_4\left( \begin{matrix}
a+1\\ b_1,  b_2,  b_3,  b_4 
\end{matrix} \bigg | x \right)\right].
\end{multline}
The above formula with $a=0$, $b_1 = b_2 = -\frac{a}{2}$, $b_3 = b_4 = -\frac{1+a}{2}$ and $x = -z$ provides
\begin{align*}
&{}_1F_4\left(\begin{matrix}
1\\ 1 -\frac{a}{2},  1 -\frac{a}{2},  \frac{1-a}{2}, \frac{1-a}{2} 
\end{matrix} \bigg | -z \right)=  \frac{\left(-\frac{a}{2}\right)^2 \left(-\frac{1+a}{2}\right)^2 }{z} \left[ 1 - {}_1F_4\left( \begin{matrix}
1\\ -\frac{a}{2}, -\frac{a}{2}, -\frac{1+a}{2}, -\frac{1+a}{2}
\end{matrix} \bigg | -z \right)\right]\\
&= \Gamma^2\left(1-\frac{a}{2}\right) \Gamma^2\left(\frac{1-a}{2}\right)\left[\frac{\left(z\right)^{-1}}{\Gamma^2\left(-\frac{a}{2}\right) \Gamma^2\left(-\frac{1+a}{2}\right)} - \frac{\left(z\right)^{-1}}{\Gamma^2\left(-\frac{a}{2}\right) \Gamma^2\left(-\frac{1+a}{2}\right)} {}_1F_4\left( \begin{matrix}
1\\ -\frac{a}{2}, -\frac{a}{2}, -\frac{1+a}{2}, -\frac{1+a}{2}
\end{matrix} \bigg | -z \right) \right].
\end{align*}
Applying \eqref{reduction ref} on the right-hand side of the above equation, we get
\begin{align*}
&{}_1F_4\left(\begin{matrix}
1\\ 1 -\frac{a}{2},  1 -\frac{a}{2},  \frac{1-a}{2}, \frac{1-a}{2} 
\end{matrix} \bigg | -z \right)\\
&= \Gamma^2\left(1-\frac{a}{2}\right) \Gamma^2\left(\frac{1-a}{2}\right) \left[\frac{\left(z\right)^{-1}}{\Gamma^2\left(-\frac{a}{2}\right) \Gamma^2\left(-\frac{1+a}{2}\right)} - \frac{\left(z\right)^{-2}}{\Gamma^2\left(-\frac{a}{2}-1\right) \Gamma^2\left(-\frac{3+a}{2}\right)}\right.\\
&\qquad \qquad \qquad \qquad
\left. + \frac{\left(z\right)^{-2}}{\Gamma^2\left(-\frac{a}{2}-1\right) \Gamma^2\left(-\frac{3+a}{2}\right)}{}_1F_4\left( \begin{matrix}
1\\ -\frac{a}{2}-1, -\frac{a}{2}-1, -\frac{3+a}{2}, -\frac{3+a}{2}
\end{matrix} \bigg | -z \right)\right].
\end{align*}
We repeat this process $m$-times and obtain
\begin{align*}
{}_1F_4&\left(\begin{matrix}
1\\ 1 -\frac{a}{2},  1 -\frac{a}{2},  \frac{1-a}{2}, \frac{1-a}{2} 
\end{matrix} \bigg | -z \right) 
= \Gamma^2\left(1-\frac{a}{2}\right) \Gamma^2\left(\frac{1-a}{2}\right) \left[ \sum_{j=1}^{m} (-1)^{j-1}  \frac{\left(z\right)^{-j}}{\Gamma^2\left(1-\frac{a}{2}-j\right) \Gamma^2\left(\frac{1-a}{2}-j\right)}\right.\\
&\left. + (-1)^m \frac{\left(z\right)^{-m}}{\Gamma^2\left(1-\frac{a}{2}-m\right) \Gamma^2\left(\frac{1-a}{2}-m\right)} {}_1F_4\left(\begin{matrix}
1\\ 1 -\frac{a}{2}-m,  1 -\frac{a}{2}-m,  \frac{1-a}{2}-m, \frac{1-a}{2}-m 
\end{matrix} \bigg | -z \right)\right].
\end{align*}
Substituting $j=k-1$ in the finite sum and applying \eqref{Duplication formula}, we conclude our lemma.
\end{proof}

\subsection{Proof of Theorem \ref{Analytic continuation}}
For $\Re(a)>-1$, we rewrite the identity \eqref{Eqn:Lambert series} as
\begin{align}\label{before finite sum simplification}
&\sum_{n=1}^\infty\sigma_{\mathbb{K},a}(n)e^{-ny}+\frac{1}{2}\zeta_{\mathbb{K}}(-a)-\frac{\zeta_{\mathbb{K}}(1-a)}{y}-\frac{2\pi h}{w\sqrt{D_\K}}\frac{\Gamma(a+1)\zeta(a+1)}{y^{a+1}}  
\nonumber\\
& =\frac{4\pi^{2-2a}D_\K^{a-\frac{1}{2}}}{y\sin(\pi a)} \sum_{n=1}^\infty \sigma_{\mathbb{K},-a}(n)\Bigg\{\frac{2^{-2a}}{\Gamma^2(1-a)}\pFq14{1}{1-\frac{a}{2},1-\frac{a}{2},\frac{1-a}{2},\frac{1-a}{2}}{-\frac{4\pi^6n^2}{y^2D_{\K}^2}}\nonumber\\
&-\left(\frac{4\pi^6n^2}{y^2D_{\K}^2}\right)^{\frac{a}{2}}\bigg(\cos\left(\frac{\pi a}{2}\right) {\rm ber}\left(4\pi\sqrt{\frac{2n\pi}{yD_{\mathbb{K}}}} \right)-\sin\left(\frac{\pi a}{2}\right){\rm bei}\left(4\pi\sqrt{\frac{2n\pi}{yD_{\mathbb{K}}}} \right)\bigg)
\nonumber\\
&-\frac{1}{2^{2a}} \sum_{k=0}^m \frac{(-1)^k \left(\frac{64\pi^6n^2}{y^2D_{\K}^2}\right)^{-k-1}}{\Gamma^2(-1-a-2k)}\Bigg\}+\frac{(2\pi)^{2-2a}D_\K^{a-\frac{1}{2}}}{y\sin(\pi a)} \sum_{n=1}^\infty \sigma_{\mathbb{K},-a}(n)\sum_{k=0}^m \frac{(-1)^k \left(\frac{64\pi^6n^2}{y^2D_{\K}^2}\right)^{-k-1}}{\Gamma^2(-1-a-2k)}.
\end{align}
The last term of the above expression can be simplified using \eqref{Dirichlet series of zeta zetak} as
\begin{align*}
\sum_{k=0}^m\frac{(-1)^k\left(\frac{64\pi^6}{y^2D_{\K}^2}\right)^{-k-1}}{\Gamma^2(-1-a-2k)}\sum_{n=1}^\infty \frac{\sigma_{\mathbb{K},-a}(n)}{n^{2k+2}}=\sum_{k=0}^m\frac{(-1)^k\left(\frac{64\pi^6}{y^2D_{\K}^2}\right)^{-k-1}}{\Gamma^2(-1-a-2k)}\zeta(2k+2)\zeta_{\K}(2k+a+2).
\end{align*}
Therefore for $Re(a)>-1$, \eqref{before finite sum simplification} can be written as
\begin{align}\label{before ac}
&\sum_{n=1}^\infty\sigma_{\mathbb{K},a}(n)e^{-ny}+\frac{1}{2}\zeta_{\mathbb{K}}(-a)-\frac{\zeta_{\mathbb{K}}(1-a)}{y}-\frac{2\pi h}{w\sqrt{D_\K}}\frac{\Gamma(a+1)\zeta(a+1)}{y^{a+1}}  
\nonumber\\
& =\frac{4\pi^{2-2a}D_\K^{a-\frac{1}{2}}}{y\sin(\pi a)} \sum_{n=1}^\infty \sigma_{\mathbb{K},-a}(n)\Bigg\{\frac{2^{-2a}}{\Gamma^2(1-a)}\pFq14{1}{1-\frac{a}{2},1-\frac{a}{2},\frac{1-a}{2},\frac{1-a}{2}}{-\frac{4\pi^6n^2}{y^2D_{\K}^2}}\nonumber\\
&-\left(\frac{4\pi^6n^2}{y^2D_{\K}^2}\right)^{\frac{a}{2}}\bigg(\cos\left(\frac{\pi a}{2}\right) {\rm ber}\left(4\pi\sqrt{\frac{2n\pi}{yD_{\mathbb{K}}}} \right)-\sin\left(\frac{\pi a}{2}\right){\rm bei}\left(4\pi\sqrt{\frac{2n\pi}{yD_{\mathbb{K}}}} \right)\bigg)
\nonumber\\
&-\frac{1}{2^{2a}} \sum_{k=0}^m \frac{(-1)^k \left(\frac{64\pi^6n^2}{y^2D_{\K}^2}\right)^{-k-1}}{\Gamma^2(-1-a-2k)}\Bigg\} +\frac{yD_\K^{a+\frac{3}{2}}}{(2\pi)^{2a+4}\sin(\pi a)}\sum_{k=0}^m\frac{(-1)^k\zeta(2k+2)\zeta_{\mathbb{K}}(2k+a+2)}{\Gamma^2(-a-1-2k)}\left(\frac{8\pi^{3}}{yD_{\mathbb{K}}}\right)^{-2k}.
\end{align}
Invoking Lemma \ref{Asymptotic for summand} and \eqref{Bound for sigma}, we have
\begin{align*}
\sigma_{\mathbb{K},-a}(n)&\Bigg\{\frac{2^{-2a}}{\Gamma^2(1-a)}\pFq14{1}{1-\frac{a}{2},1-\frac{a}{2},\frac{1-a}{2},\frac{1-a}{2}}{-\frac{4\pi^6n^2}{y^2D_{\K}^2}}-\left(\frac{4\pi^6n^2}{y^2D_{\K}^2}\right)^{\frac{a}{2}}\bigg(\cos\left(\frac{\pi a}{2}\right) {\rm ber}\left(4\pi\sqrt{\frac{2n\pi}{yD_{\mathbb{K}}}} \right)\nonumber\\
&-\sin\left(\frac{\pi a}{2}\right){\rm bei}\left(4\pi\sqrt{\frac{2n\pi}{yD_{\mathbb{K}}}} \right)\bigg)-\frac{1}{2^{2a}} \sum_{k=0}^m \frac{(-1)^k \left(\frac{64\pi^6n^2}{y^2D_{\K}^2}\right)^{-k-1}}{\Gamma^2(-1-a-2k)}\Bigg\}\nonumber\\
&\ll\begin{cases}
n^{-2m-4+\epsilon}& \text{ for }\ \Re(a)\geq0\\
n^{-2m-4-\Re(a)+\epsilon} &\text{ for }\ \Re(a)<0.
\end{cases}
\end{align*}
This shows that the infinite series on the right-hand side of \eqref{before ac} is uniformly convergent for $\Re(a)\geq-2m-3+\epsilon$ where $\epsilon>0$. Since the summand of the above series is analytic for $\Re(a) >-2m - 3$, it follows from Weierstrass’ theorem that this series represents an analytic function of $a$ for $\Re(a) >-2m-3$.

The left-hand side of \eqref{before ac} as well as the finite sum on its right-hand side are also analytic for $\Re(a) > -2m-3$, hence by the principle of analytic continuation, \eqref{before ac} holds for $\Re(a) > -2m-3$ with $m \geq 0$. Finally we apply Lemma \ref{1F4 lower reduction} in \eqref{before ac} to conclude our theorem.
\qed


The following lemma is crucial in proving the next results and seems new in the literature.
\begin{lemma}\label{der1f4@2m-1}
Let $\ell\in\mathbb{Z}$. Then for $z>0$, we have
\begin{align}\label{derivative1}
&\frac{d}{da}\pFq14{1}{1-\frac{a}{2}+\ell,1-\frac{a}{2}+\ell,\frac{1-a}{2}+\ell,\frac{1-a}{2}+\ell}{-z^4}\Bigg|_{a=2\ell-1}\nonumber\\
&=\frac{1}{2z^2}\left\{(\gamma-1+\log(2z))\mathrm{bei}\left(4z\right)+\frac{\pi}{4}\mathrm{ber}\left(4z\right)+\mathrm{kei}\left(4z\right)\right\}
\end{align}
and 
\begin{align}\label{derivative2}
&\frac{d}{da}\pFq14{1}{1-\frac{a}{2}+\ell,1-\frac{a}{2}+\ell,\frac{1-a}{2}+\ell,\frac{1-a}{2}+\ell}{-z^4}\Bigg|_{a=2\ell}\nonumber\\
&=2(\gamma+\log(2z))\mathrm{ber}(4z)-\frac{\pi}{2}\mathrm{bei}(4z)+2\mathrm{ker}(4z).
\end{align}
\end{lemma}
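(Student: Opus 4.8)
The plan is to reduce both identities to the term-by-term differentiation in $a$ of a \emph{single} power series, and then to match the result against the classical logarithmic expansion of $K_0$. Set $A:=1-a+2\ell$, so that $\tfrac{A+1}{2}=1-\tfrac a2+\ell$ and $\tfrac A2=\tfrac{1-a}2+\ell$. By the duplication formula \eqref{Duplication formula} one has $\bigl(\tfrac A2\bigr)_n\bigl(\tfrac{A+1}2\bigr)_n=4^{-n}(A)_{2n}$, and since $(1)_n/n!=1$ this yields the identity of meromorphic functions of $a$
\begin{equation*}
F_\ell(a):=\pFq14{1}{1-\frac a2+\ell,1-\frac a2+\ell,\frac{1-a}2+\ell,\frac{1-a}2+\ell}{-z^4}=\sum_{n=0}^\infty\frac{(-16z^4)^n}{\bigl((A)_{2n}\bigr)^2},\qquad A=1-a+2\ell.
\end{equation*}
Here $A=1$ when $a=2\ell$ and $A=2$ when $a=2\ell-1$; both are regular points, and in particular the answer comes out independent of $\ell$, as it must.

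Next I would differentiate $F_\ell$ term by term in $a$ (legitimate since the series converges locally uniformly in $a$ near $A=1,2$ for fixed $z>0$, so Weierstrass applies). Using $\tfrac{dA}{da}=-1$ and $\tfrac{d}{dA}\log\bigl((A)_{2n}\bigr)=\psi(A+2n)-\psi(A)$, with $\psi$ the digamma function, one gets
\begin{equation*}
F_\ell'(a)=2\sum_{n=0}^\infty\frac{(-16z^4)^n}{\bigl((A)_{2n}\bigr)^2}\bigl(\psi(A+2n)-\psi(A)\bigr).
\end{equation*}
Evaluating at $A=1$, where $(1)_{2n}=(2n)!$ and $\psi(1+2n)-\psi(1)=H_{2n}$, and at $A=2$, where $(2)_{2n}=(2n+1)!$ and $\psi(2+2n)-\psi(2)=H_{2n+1}-1$ (here $H_k$ is the harmonic number and $16z^4=(2z)^4$), gives
\begin{align*}
F_\ell'(2\ell)&=2\sum_{n\ge0}\frac{(-1)^n(2z)^{4n}}{\bigl((2n)!\bigr)^2}\,H_{2n},\\
F_\ell'(2\ell-1)&=2\sum_{n\ge0}\frac{(-1)^n(2z)^{4n}}{\bigl((2n+1)!\bigr)^2}\bigl(H_{2n+1}-1\bigr).
\end{align*}

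It remains to recognise these series. Using \eqref{BerBei} with dummy variable $z^4$ identifies the harmonic-free parts, namely $\sum_{n\ge0}(-1)^n(2z)^{4n}/\bigl((2n)!\bigr)^2=\mathrm{ber}(4z)$ and $\sum_{n\ge0}(-1)^n(2z)^{4n}/\bigl((2n+1)!\bigr)^2=\mathrm{bei}(4z)/(4z^2)$. For the harmonic-weighted parts I would start from the classical expansion $K_0(w)=-(\gamma+\log(w/2))I_0(w)+\sum_{k\ge1}H_k(w/2)^{2k}/(k!)^2$, specialise to $w=4z\,e^{i\pi/4}$, and separate real and imaginary parts using \eqref{kbesse}, the Kelvin function definitions in \S\ref{sec:specialfunctions}, and $I_0(4z\,e^{i\pi/4})=\mathrm{ber}(4z)+i\,\mathrm{bei}(4z)$; this produces
\begin{align*}
\mathrm{ker}(4z)&=-(\gamma+\log(2z))\,\mathrm{ber}(4z)+\tfrac{\pi}{4}\,\mathrm{bei}(4z)+\sum_{m\ge0}\frac{(-1)^m H_{2m}(2z)^{4m}}{\bigl((2m)!\bigr)^2},\\
\mathrm{kei}(4z)&=-(\gamma+\log(2z))\,\mathrm{bei}(4z)-\tfrac{\pi}{4}\,\mathrm{ber}(4z)+\sum_{m\ge0}\frac{(-1)^m H_{2m+1}(2z)^{4m+2}}{\bigl((2m+1)!\bigr)^2}.
\end{align*}
Solving these for the two harmonic-weighted sums and substituting into the formulas for $F_\ell'(2\ell)$ and $F_\ell'(2\ell-1)$, then collecting terms (the factor $H_{2n+1}-1$ peeling off an extra copy of $\mathrm{bei}(4z)/(4z^2)$, which is precisely what turns $\gamma+\log(2z)$ into $\gamma-1+\log(2z)$ in \eqref{derivative1}), yields \eqref{derivative2} and \eqref{derivative1}. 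The one genuinely delicate step is this last identification: one must carefully keep track of the powers of $i$ when extracting the real and imaginary parts of $K_0(4z\,e^{i\pi/4})$, so that the $\log$ and the $\tfrac{\pi}{4}$ terms appear with exactly the right signs; everything else is bookkeeping.
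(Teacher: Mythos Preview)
Your argument is correct and follows essentially the same route as the paper: both rewrite the ${}_1F_4$ as $\sum_{k}(-1)^k(2z)^{4k}\Gamma^2(A)/\Gamma^2(A+2k)$ via the duplication formula, differentiate termwise to bring in digamma (equivalently, harmonic-number) weights, and then identify the resulting series with Kelvin functions. The only substantive difference is in the last step: the paper quotes the identities
\[
\sum_{k\ge0}\frac{(-1)^k\psi(2k+2)}{((2k+1)!)^2}t^{4k},\qquad \sum_{k\ge0}\frac{(-1)^k\psi(2k+1)}{((2k)!)^2}t^{4k}
\]
directly from Brychkov's tables \cite[p.~439, Formulas 77--78]{bry}, whereas you derive the equivalent harmonic-number identities yourself by specialising the classical logarithmic expansion of $K_0$ at $w=4ze^{i\pi/4}$ and separating real and imaginary parts. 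Your version is more self-contained and makes transparent \emph{why} the $\mathrm{ker}$, $\mathrm{kei}$ terms appear (they come straight from $K_0$), at the cost of having to track the $i\pi/4$ in $\log(w/2)$ carefully; the paper's version is shorter but relies on a handbook lookup.
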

\begin{proof}
The series definition of ${}_1F_4$ and \eqref{Duplication formula} yields
\begin{align*}
\pFq14{1}{1-\frac{a}{2}+\ell,1-\frac{a}{2}+\ell,\frac{1-a}{2}+\ell,\frac{1-a}{2}+\ell}{-z^4}&=\sum_{k=0}^\infty \frac{1}{\left(\frac{1-a}{2}+\ell\right)_k^2\left(1-\frac{a}{2}+\ell\right)_k^2}(-z^4)^k\nonumber\\
&=\sum_{k=0}^\infty\frac{(-1)^k\Gamma^2(1-a+2\ell)}{\Gamma^2(1-a+2\ell+2k)}(2z)^{4k}.
\end{align*}
Differentiating both sides of the above equation with respect to $a$, we obtain
\begin{align*}
\frac{d}{da}\pFq14{1}{1-\frac{a}{2}+\ell,1-\frac{a}{2}+\ell,\frac{1-a}{2}+\ell,\frac{1-a}{2}+\ell}{-z^4}&=-2\sum_{k=0}^\infty \frac{(-1)^k\Gamma^2(1-a+2\ell)\psi(1-a+2\ell)}{\Gamma^2(1-a+2\ell+2k)}(2z)^{4k}\nonumber\\
&+2\sum_{k=0}^\infty \frac{(-1)^k\Gamma^2(1-a+2\ell)\psi(1-a+2\ell+2k)}{\Gamma^2(1-a+2\ell+2k)}(2z)^{4k}.
\end{align*} 
Thus the above expression at $a=2\ell-1$ becomes
\begin{align}\label{brybefore}
\frac{d}{da}\pFq14{1}{1-\frac{a}{2}+\ell,1-\frac{a}{2}+\ell,\frac{1-a}{2}+\ell,\frac{1-a}{2}+\ell}{-z^4}\Bigg|_{a=2\ell-1}&=2(\gamma-1)\sum_{k=0}^\infty \frac{(-1)^k(2z)^{4k}}{\Gamma^2(2k+2)}\nonumber\\
&+2\sum_{k=0}^\infty \frac{(-1)^k\psi(2k+2)(2z)^{4k}}{\Gamma^2(2k+2)}.
\end{align}
Applying \eqref{BerBei}, the first infinite series on the right hand side of the above equation reduces to 
\begin{align}\label{series0f3ber}
\sum_{k=0}^\infty \frac{(-1)^k(2z)^{4k}}{\Gamma^2(2k+2)}=\pFq03{-}{\frac{3}{2},\frac{3}{2},1}{-z^4}=\frac{1}{4z^2}\mathrm{bei}(4z),
\end{align} 
It follows from \cite[p.~439, Formula~78]{bry} that for $t>0$, we have
\begin{align}\label{bry2k+2}
\sum_{k=0}^\infty\frac{(-1)^k\psi(2k+2)}{\Gamma^2(2k+2)}t^{4k}=\frac{1}{4t^2}\left\{\pi\mathrm{ber}(2t)+4\log(t)\mathrm{bei}(2t)+4\mathrm{kei}(2t)\right\}.
\end{align}
Thus \eqref{bry2k+2} with $t=2z$ and \eqref{series0f3ber} together implies \eqref{derivative1}. We can also obtain \eqref{derivative2} in a similar way by substituting $a$ by $2\ell$ in \eqref{brybefore} and applying the following relation
\begin{align*}
\sum_{k=0}^\infty \frac{\psi(2k+1)(-1)^kt^{4k}}{((2k)!)^2}=\log(t)\mathrm{ber}(2t)-\frac{\pi}{4}\mathrm{bei}(2t)+\mathrm{ker}(2t)
\end{align*}
for $t>0$, which follows from \cite[p.~439, Formula 77]{bry}.
\end{proof}

\subsection{Proof of Theorem \ref{a=-1 case}}
The main idea here is to take  limit on  both sides of the identity in Theorem \ref{Analytic continuation} as $a\to-1$. Therefore, it is sufficient to consider $m=0$ in Theorem \ref{Analytic continuation}. Applying the Laurent series expansion at $s=1$ of the functions $\zeta_{\K}(s)$, $\zeta(s)$ and the expansion of $\Gamma(s)$ at $s=0$, the following limit reduces as 
\begin{align}\label{lima=-1-1}
\lim_{a\to-1}\left\{\frac{1}{2}\zeta_{\mathbb{K}}(-a)-\frac{2\pi h}{w\sqrt{D_\K}}\frac{\Gamma(a+1)\zeta(a+1)}{y^{a+1}}\right\}=\frac{1}{2}\left\{L'(1,\chi_D)+L(1,\chi_D)\left(2\gamma-\log\left(\frac{2\pi}{y}\right)\right)\right\}.
\end{align}
We next evaluate the following limit
\begin{align}\label{lima=-1}
L_{-1}:=&\lim_{a\to-1}\frac{1}{\sin(\pi a)}\Bigg[\frac{2^{-2a}}{\Gamma^2(1-a)}
\bigg \{ {}_1F_4\left(\begin{matrix}
1\\ 1 -\frac{a}{2},  1 -\frac{a}{2},  \frac{1-a}{2}, \frac{1-a}{2}
\end{matrix} \bigg | -\frac{4\pi^6n^2}{y^2D_\K^2} \right) - a^2(a+1)^2\left(\frac{64\pi^6n^2}{y^2D_\K^2}\right)^{-1} \bigg\}\nonumber\\
&-\left( \frac{4\pi^6n^2}{y^2D_\K^2} \right)^{\frac{a}{2}}\bigg \{ \cos\left(\frac{\pi a}{2}\right) {\rm ber}\left(4\pi\sqrt{\frac{2n\pi}{yD_{\mathbb{K}}}} \right)
-\sin\left(\frac{\pi a}{2}\right){\rm bei}\left(4\pi\sqrt{\frac{2n\pi}{yD_{\mathbb{K}}}} \right)\bigg \}\Bigg].
\end{align}
For $a=-1$, it follows from \eqref{BerBei} that
\begin{align*}
\frac{2^{-2a}}{\Gamma^2(1-a)}&
\bigg \{ {}_1F_4\left(\begin{matrix}
1\\ 1 -\frac{a}{2},  1 -\frac{a}{2},  \frac{1-a}{2}, \frac{1-a}{2}
\end{matrix} \bigg | -\frac{4\pi^6n^2}{y^2D_\K^2} \right)- a^2(a+1)^2\left(\frac{64\pi^6n^2}{y^2D_\K^2}\right)^{-1} \bigg\}\\
&=\left( \frac{4\pi^6n^2}{y^2D_\K^2} \right)^{\frac{a}{2}}\bigg \{ \cos\left(\frac{\pi a}{2}\right) {\rm ber}\left(4\pi\sqrt{\frac{2n\pi}{yD_{\mathbb{K}}}} \right)-\sin\left(\frac{\pi a}{2}\right){\rm bei}\left(4\pi\sqrt{\frac{2n\pi}{yD_{\mathbb{K}}}} \right)\bigg \}
\end{align*}
Thus we have $0/0$ form in the limit \eqref{lima=-1}. Applying L'Hopital's rule and \eqref{derivative1} with $\ell=0$, the limit evaluates as
\begin{align}\label{lima=-12}
L_{-1}=-\frac{yD_\K}{n\pi^4}\mathrm{ker}\left(4\pi\sqrt{\frac{2n\pi}{yD_{\K}}}\right).
\end{align}
We also determine the following limit as
\begin{align}\label{lima=-13}
\lim_{a\to-1}\frac{\zeta_\K(a+2)}{\sin(\pi a)\Gamma^2(-a-1)}&=\lim_{s\to1}\frac{\zeta_\K(s)}{\sin(\pi s)\Gamma^2(1-s)}=-\frac{1}{\pi}L(1,\chi_D).
\end{align}
Finally, taking  limit as $a\to-1$ on the both sides of \eqref{Eqn:Analytic continuation} and applying  \eqref{lima=-1-1}, \eqref{lima=-12} and \eqref{lima=-13} together we conclude our theorem.

\subsection{Proof of Corollary \ref{a=0 case}}
In this case, we need to take limit on  both sides of the identity in Theorem \ref{Lambert series} as $a\to0$. It follows by change of variable that
\begin{align*}
&\lim_{a\to0}\left\{\frac{\zeta_{\mathbb{K}}(1-a)}{y}+\frac{L(1, \chi_D)\Gamma(a+1)\zeta(a+1)}{y^{a+1}}\right\}=\frac{1}{y}\lim_{s\to1}\left\{\zeta_{\mathbb{K}}(s)+\frac{L(1, \chi_D)\Gamma(2-s)\zeta(2-s)}{y^{1-s}}\right\}.
\end{align*}
Inserting the Laurent series expansion at $s=1$ of the functions $\zeta_{\K}(s)$, $\zeta(s)$ and $\Gamma(s)$, the above limit reduces to 
\begin{align}\label{Hlim}
\lim_{a\to0}\left\{\frac{\zeta_{\mathbb{K}}(1-a)}{y}+\frac{L(1, \chi_D)\Gamma(a+1)\zeta(a+1)}{y^{a+1}}\right\}
&=\frac{L'(1, \chi_D)+L(1, \chi_D) (\gamma - \log(y))}{y}.
\end{align}
We also have singularities in the first and second term of the summand on the right-hand side of \eqref{Eqn:Lambert series}. Therefore, we need to evaluate the following limit
\begin{align}\label{oby0}
L_0&:=\lim_{a\to0}\frac{1}{a}\Bigg\{\frac{2^{-2a}\Gamma(a+1)}{\Gamma(1-a)}\left(\frac{4\pi^6n^2}{y^2D_{\mathbb{K}}^2}\right)^{-\frac{a}{4}}\pFq14{1}{1-\frac{a}{2},1-\frac{a}{2},\frac{1-a}{2},\frac{1-a}{2}}{-\frac{4\pi^6n^2}{y^2D_{\mathbb{K}}^2}}\nonumber\\
&\quad-\Gamma\left(1-\frac{a}{2}\right)\Gamma\left(1+\frac{a}{2}\right)\left(\frac{4\pi^6n^2}{y^2D_{\mathbb{K}}^2}\right)^{\frac{a}{4}}\mathrm{ber}\left(4\pi\sqrt{\frac{2n\pi}{yD_{\K}}}\right)\Bigg\}
\end{align}
where we have applied \eqref{Reflection formula} on the gamma factors. It can be noted that for $a=0$, we have
\begin{align*}
&\frac{2^{-2a}\Gamma(a+1)}{\Gamma(1-a)}\left(\frac{4\pi^6n^2}{y^2D_{\mathbb{K}}^2}\right)^{-\frac{a}{4}}\pFq14{1}{1-\frac{a}{2},1-\frac{a}{2},\frac{1-a}{2},\frac{1-a}{2}}{-\frac{4\pi^6n^2}{y^2D_{\mathbb{K}}^2}}\nonumber\\
&=\Gamma\left(1-\frac{a}{2}\right)\Gamma\left(1+\frac{a}{2}\right)\left(\frac{4\pi^6n^2}{y^2D_{\mathbb{K}}^2}\right)^{\frac{a}{4}}\mathrm{ber}\left(4\pi\sqrt{\frac{2n\pi}{yD_{\K}}}\right),
\end{align*}
thus we have $0/0$ form in \eqref{oby0}. Applying L'Hopital's rule, we evaluate the limit as
\begin{align*}
L_0&=\left(-2\gamma-\log\left(\frac{8\pi^3n}{yD_{\mathbb{K}}}\right)\right)\mathrm{ber}\left(4\pi\sqrt{\frac{2n\pi}{yD_{\K}}}\right)+\frac{d}{da}\pFq14{1}{1-\frac{a}{2},1-\frac{a}{2},\frac{1-a}{2},\frac{1-a}{2}}{-\frac{4\pi^6n^2}{y^2D_{\mathbb{K}}^2}}\Bigg|_{a=0}.
\end{align*}
We next employ \eqref{derivative2} with $\ell=0$ in the above equation to get
\begin{align}\label{l0result}
L_0
=2\mathrm{ker}\left(4\pi\sqrt{\frac{2n\pi}{yD_{\K}}}\right)-\frac{\pi}{2}\mathrm{bei}\left(4\pi\sqrt{\frac{2n\pi}{yD_{\K}}}\right).
\end{align}
Finally, taking  limit as $a\to0$ on the both sides of \eqref{Eqn:Lambert series} and applying  $\zeta_{\K}(0)=-h/w$,  \eqref{Hlim} and \eqref{l0result} together  conclude our corollary.

\section{Analogues of transformation formulas for Eisenstein series}
In this section, we mainly study the infinite series associated to $\sigma_{\K, a}(n)$, which is analogous to Eisenstein series. 

\subsection{Proof of Theorem \ref{a=2m-1,2m}}
We first prove the identity \eqref{a=2m-1} and for that we need to take the limit for $a\to2m-1$ on the both sides of \eqref{Eqn:Lambert series}. Substituting $a$ by $a-2m$ in Lemma \ref{1F4 lower reduction}, we apply it on the right-hand side of \eqref{Eqn:Lambert series} to obtain
\begin{align}\label{expanded form}
&\frac{2^{1+\frac{a}{2}}\pi^{1-\frac{a}{2}}D_{\K}^{\frac{a-1}{2}}}{y^{1+\frac{a}{2}}}\sum_{n=1}^\infty\frac{\sigma_{\K,-a}(n)}{n^{-\frac{a}{2}}}\Bigg[\frac{\Gamma\left(\frac{a}{2}\right)\Gamma\left(\frac{1+a}{2}\right)}{\Gamma\left(1-\frac{a}{2}\right)\Gamma\left(\frac{1-a}{2}\right)}\left(\frac{4\pi^6n^2}{y^2D_{\mathbb{K}}^2}\right)^{-\frac{a}{4}}-\Gamma\left(\frac{a}{2}\right)\Gamma\left(\frac{1+a}{2}\right)\Gamma\left(1-\frac{a}{2}\right)\Gamma\left(\frac{1-a}{2}\right)\nonumber\\
&\times\left(\frac{4\pi^6n^2}{y^2D_{\mathbb{K}}^2}\right)^{-\frac{a}{4}}\Bigg\{\sum_{k=0}^{m-2}\frac{(-1)^k}{\Gamma\left(2-\frac{a}{2}+k\right)^2\Gamma\left(\frac{3-a}{2}+k\right)^2}\left(\frac{4\pi^6n^2}{y^2D_{\mathbb{K}}^2}\right)^{k+1}+\frac{(-1)^{m-1}}{\Gamma\left(1-\frac{a}{2}+m\right)^2\Gamma\left(\frac{1-a}{2}+m\right)^2}\nonumber\\
&\times\left(\frac{4\pi^6n^2}{y^2D_\mathbb{{\mathbb{K}}}^2}\right)^m \pFq14{1}{1-\frac{a}{2}+m,1-\frac{a}{2}+m,\frac{1-a}{2}+m,\frac{1-a}{2}+m}{-\frac{4\pi^6n^2}{y^2D_{\mathbb{K}}^2}}\Bigg\}\nonumber\\
&-\left( \frac{4\pi^6n^2}{y^2D_\K^2} \right)^{\frac{a}{2}}\bigg(\cos\left(\frac{\pi a}{2}\right) {\rm ber}\left(4\pi\sqrt{\frac{2n\pi}{yD_{\mathbb{K}}}} \right)-\sin\left(\frac{\pi a}{2}\right){\rm bei}\left(4\pi\sqrt{\frac{2n\pi}{yD_{\mathbb{K}}}} \right)\bigg)\Bigg].
\end{align}
Firstly, we evaluate the limit
\begin{align}\label{gammaequibefore}
L_{2m-1}:&=\lim_{a\to 2m-1}\frac{1}{(a-2m+1)}\Bigg\{(a-2m+1)\Gamma\left(\frac{a}{2}\right)\Gamma\left(\frac{1+a}{2}\right)\Gamma\left(1-\frac{a}{2}\right)\Gamma\left(\frac{1-a}{2}\right)\left(\frac{4\pi^6n^2}{y^2D_{\mathbb{K}}^2}\right)^{m-\frac{a}{4}}\nonumber\\
&\times\frac{(-1)^{m}}{\Gamma\left(1-\frac{a}{2}+m\right)^2\Gamma\left(\frac{1-a}{2}+m\right)^2}\pFq14{1}{1-\frac{a}{2}+m,1-\frac{a}{2}+m,\frac{1-a}{2}+m,\frac{1-a}{2}+m}{-\frac{4\pi^6n^2}{y^2D_{\mathbb{K}}^2}}\nonumber\\
&-(a-2m+1)\Gamma\left(-\frac{1}{2}-\frac{a}{2}\right)\Gamma\left(\frac{3+a}{2}\right)\left(\frac{4\pi^6n^2}{y^2D_{\mathbb{K}}^2}\right)^{\frac{a}{4}}{\rm bei}\left(4\pi\sqrt{\frac{2n\pi}{yD_{\mathbb{K}}}} \right)\Bigg\}.
\end{align}
Using \eqref{Reflection formula}, the following gamma factors can be reduced as
\begin{align}\label{gammaequi1}
(a-2m+1)\Gamma\left(\frac{1-a}{2}\right)\Gamma\left(\frac{1+a}{2}\right)=2(-1)^{m}\Gamma\left(\frac{a-2m+1}{2}+1\right)\Gamma\left(\frac{1-a+2m}{2}\right)
\end{align}
and 
\begin{align}\label{gammaequi2}
(a-2m+1)\Gamma\left(-\frac{1}{2}-\frac{a}{2}\right)\Gamma\left(\frac{3+a}{2}\right)&=2(-1)^{m+1}\Gamma\left(\frac{a-2m+1}{2}+1\right)\Gamma\left(\frac{1-a+2m}{2}\right).
\end{align}
We then plug back \eqref{gammaequi1} and \eqref{gammaequi2} into \eqref{gammaequibefore} to obtain
\begin{align}\label{lhopitalbefore}
L_{2m-1}&=\lim_{a\to 2m-1}\frac{1}{(a-2m+1)}\Bigg\{\frac{2\Gamma\left(\frac{a}{2}\right)\Gamma\left(1-\frac{a}{2}\right)\Gamma\left(\frac{a-2m+1}{2}+1\right)}{\Gamma\left(1-\frac{a}{2}+m\right)^2\Gamma\left(\frac{1-a}{2}+m\right)}\left(\frac{4\pi^6n^2}{y^2D_{\mathbb{K}}^2}\right)^{m-\frac{a}{4}}\nonumber\\
&\quad\times\pFq14{1}{1-\frac{a}{2}+m,1-\frac{a}{2}+m,\frac{1-a}{2}+m,\frac{1-a}{2}+m}{-\frac{4\pi^6n^2}{y^2D_{\mathbb{K}}^2}}\nonumber\\
&\quad+2(-1)^m\Gamma\left(\frac{a-2m+1}{2}+1\right)\Gamma\left(\frac{1-a+2m}{2}\right)\left(\frac{4\pi^6n^2}{y^2D_{\mathbb{K}}^2}\right)^{\frac{a}{4}}{\rm bei}\left(4\pi\sqrt{\frac{2n\pi}{yD_{\mathbb{K}}}}\right)\Bigg\}.
\end{align}
Applying \eqref{BerBei} it is easy to see that for $a=2m-1$,  
we have $0/0$ form in \eqref{lhopitalbefore}, hence we use L'Hopital's rule to evaluate the limit as
\begin{align}\label{conclude}
L_{2m-1}
&=(-1)^m\left(\frac{2\pi^3n}{yD_{\K}}\right)^{m-\frac{1}{2}}\left(4(\gamma-1)+\log\left(\frac{64\pi^6n^2}{y^2D_{\K}^2}\right)\right)\mathrm{bei}\left(4\pi\sqrt{\frac{2n\pi}{yD_{\mathbb{K}}}}\right)-8(-1)^m\left(\frac{4\pi^6n^2}{y^2D_{\mathbb{K}}^2}\right)^{\frac{m}{2}+\frac{1}{4}}\nonumber\\
&\quad\times\frac{d}{da}\pFq14{1}{1-\frac{a}{2}+m,1-\frac{a}{2}+m,\frac{1-a}{2}+m,\frac{1-a}{2}+m}{-\frac{4\pi^6n^2}{y^2D_{\mathbb{K}}^2}}\Bigg|_{a=2m-1}.
\end{align}
Substituting  \eqref{derivative1} into \eqref{conclude}, we obtain
\begin{align}\label{conclude2}
L_{2m-1}=(-1)^{m+1}2^{m-\frac{1}{2}}\pi^{3m-\frac{3}{2}}\left(\frac{n}{yD_{\mathbb{K}}}\right)^{m-\frac{1}{2}}\left\{\pi\mathrm{ber}\left(4\pi\sqrt{\frac{2n\pi}{yD_{\mathbb{K}}}}\right)+4\mathrm{kei}\left(4\pi\sqrt{\frac{2n\pi}{yD_{\mathbb{K}}}}\right)\right\}.
\end{align}
It is straightforward to see that
\begin{align}\label{effortless0}
\lim_{a\to 2m-1}&\Bigg\{\frac{\Gamma\left(\frac{a}{2}\right)\Gamma\left(\frac{1+a}{2}\right)}{\Gamma\left(1-\frac{a}{2}\right)\Gamma\left(\frac{1-a}{2}\right)}\left(\frac{4\pi^6n^2}{y^2D_{\mathbb{K}}^2}\right)^{-\frac{a}{4}}-\Gamma\left(\frac{a}{2}\right)\Gamma\left(\frac{1+a}{2}\right)\Gamma\left(1-\frac{a}{2}\right)\Gamma\left(\frac{1-a}{2}\right)\left(\frac{4\pi^6n^2}{y^2D_{\mathbb{K}}^2}\right)^{-\frac{a}{4}}\nonumber\\
&\times\sum_{k=0}^{m-2}\frac{(-1)^k}{\Gamma\left(2-\frac{a}{2}+k\right)^2\Gamma\left(\frac{3-a}{2}+k\right)^2}\left(\frac{4\pi^6n^2}{y^2D_{\mathbb{K}}^2}\right)^{k+1}\Bigg\}=0.
\end{align}
Thus taking limit as $a\to2m-1$ on the both sides of \eqref{Eqn:Lambert series} and using the fact that $\zeta_{\K}(s)$ has zeros on the negative integers, \eqref{expanded form}, \eqref{conclude2} and \eqref{effortless0} together yield
\begin{align}\label{a2m-1 in y form}
\sum_{n=1}^\infty\sigma_{\mathbb{K},2m-1}(n)e^{-ny}&=\frac{L(1,\chi_D)\Gamma(2m)\zeta(2m)}{y^{2m}}+\frac{4(-1)^{m+1}}{\sqrt{D_{\mathbb{K}}}}\left(\frac{2\pi}{y}\right)^{2m}\sum_{n=1}^\infty \frac{\sigma_{\mathbb{K},1-2m}(n)}{n^{1-2m}}\mathrm{kei}\left(4\pi\sqrt{\frac{2n\pi}{yD_{\mathbb{K}}}}\right).
\end{align}
Finally, we substitute $y$ by $\frac{8\pi^2\alpha}{D_{\K}}$ with $\alpha\beta=\frac{D_{\K}^2}{16\pi^2}$ in \eqref{a2m-1 in y form} to conclude \eqref{a=2m-1}.


We next show the second part of our theorem. The idea of the proof goes along the similar direction as in the previous part by taking limit as $a\to 2m$ on the both sides of \eqref{Eqn:Lambert series}. It follows from \eqref{expanded form} that the following limit needs to be evaluated:
\begin{align*}
L_{2m}:=&\lim_{a\to 2m}\frac{1}{(a-2m)}\Bigg\{(a-2m)\Gamma\left(\frac{a}{2}\right)\Gamma\left(\frac{1+a}{2}\right)\Gamma\left(1-\frac{a}{2}\right)\Gamma\left(\frac{1-a}{2}\right)\left(\frac{4\pi^6n^2}{y^2D_{\mathbb{K}}^2}\right)^{m-\frac{a}{4}}\nonumber\\
&\times\frac{(-1)^{m}}{\Gamma\left(1-\frac{a}{2}+m\right)^2\Gamma\left(\frac{1-a}{2}+m\right)^2}\pFq14{1}{1-\frac{a}{2}+m,1-\frac{a}{2}+m,\frac{1-a}{2}+m,\frac{1-a}{2}+m}{-\frac{4\pi^6n^2}{y^2D_{\mathbb{K}}^2}}\nonumber\\
&+(a-2m)\Gamma\left(-\frac{a}{2}\right)\Gamma\left(1+\frac{a}{2}\right)\left(\frac{4\pi^6n^2}{y^2D_{\mathbb{K}}^2}\right)^{\frac{a}{4}}\mathrm{ber}\left(4\pi\sqrt{\frac{2n\pi}{yD_{\mathbb{K}}}}\right)\Bigg\}.
\end{align*}
Applying \eqref{Reflection formula}, we can write the above limit as 
\begin{align*}
L_{2m}=\lim_{a\to 2m}&\frac{1}{(a-2m)}\Bigg\{\frac{2\Gamma\left(1+\frac{a}{2}-m\right)\Gamma\left(\frac{1+a}{2}\right)\Gamma\left(\frac{1-a}{2}\right)}{\Gamma\left(1-\frac{a}{2}+m\right)\Gamma\left(\frac{1-a}{2}+m\right)^2}\left(\frac{4\pi^6n^2}{y^2D_{\mathbb{K}}^2}\right)^{m-\frac{a}{4}}\nonumber\\
&\times\pFq14{1}{1-\frac{a}{2}+m,1-\frac{a}{2}+m,\frac{1-a}{2}+m,\frac{1-a}{2}+m}{-\frac{4\pi^6n^2}{y^2D_{\mathbb{K}}^2}}\nonumber\\
&-2(-1)^m\Gamma\left(1+\frac{a}{2}-m\right)\Gamma\left(1-\frac{a}{2}+m\right)\left(\frac{4\pi^6n^2}{y^2D_{\mathbb{K}}^2}\right)^{\frac{a}{4}}\mathrm{ber}\left(4\pi\sqrt{\frac{2n\pi}{yD_{\mathbb{K}}}}\right)\Bigg\}.
\end{align*}
Equation \eqref{BerBei} implies that the above equation reduces to $0/0$ form for $a=2m$. Thus we can use L'Hopital's rule to evaluate the limit as
\begin{align*}
L_{2m}&=(-1)^{m+1}2^m\pi^{3m}\left(\frac{n}{yD_{\mathbb{K}}}\right)^m\Bigg\{\mathrm{ber}\left(4\pi\sqrt{\frac{2n\pi}{yD_{\mathbb{K}}}}\right)\left(4\gamma+\log\left(\frac{64\pi^6n^2}{y^2D_{\mathbb{K}}^2}\right)\right)\nonumber\\
&\quad-2\frac{d}{da}\pFq14{1}{1-\frac{a}{2}+m,1-\frac{a}{2}+m,\frac{1-a}{2}+m,\frac{1-a}{2}+m}{-\frac{4\pi^6n^2}{y^2D_{\mathbb{K}}^2}}\Bigg\}\Bigg|_{a=2m}.
\end{align*}
Inserting \eqref{derivative2} with $\ell=m$ into the above equation, we evaluate
\begin{align}\label{conclude3}
L_{2m}&=(-1)^{m+1}2^m\pi^{3m}\left(\frac{n}{yD_{\mathbb{K}}}\right)^m\left\{\pi\mathrm{bei}\left(4\pi\sqrt{\frac{2n\pi}{yD_{\mathbb{K}}}}\right)-4\mathrm{ker}\left(4\pi\sqrt{\frac{2n\pi}{yD_{\mathbb{K}}}}\right)\right\}.
\end{align}
It follows immediately from the poles of the gamma functions that
\begin{align}\label{effortless01}
&\lim_{a\to 2m}\Bigg\{\frac{\Gamma\left(\frac{a}{2}\right)\Gamma\left(\frac{1+a}{2}\right)}{\Gamma\left(1-\frac{a}{2}\right)\Gamma\left(\frac{1-a}{2}\right)}\left(\frac{4\pi^6n^2}{y^2D_{\mathbb{K}}^2}\right)^{-\frac{a}{4}}-\Gamma\left(\frac{a}{2}\right)\Gamma\left(\frac{1+a}{2}\right)\Gamma\left(1-\frac{a}{2}\right)\Gamma\left(\frac{1-a}{2}\right)\left(\frac{4\pi^6n^2}{y^2D_{\mathbb{K}}^2}\right)^{-\frac{a}{4}}\nonumber\\
&\quad\times\sum_{k=0}^{m-2}\frac{(-1)^k}{\Gamma\left(2-\frac{a}{2}+k\right)^2\Gamma\left(\frac{3-a}{2}+k\right)^2}\left(\frac{4\pi^6n^2}{y^2D_{\mathbb{K}}^2}\right)^{k+1}\Bigg\}=0.
\end{align}
Thus taking limit as $a\to2m$ on the both sides of \eqref{Eqn:Lambert series} and using the fact that $\zeta_{\K}(s)$ has zeros on the negative integers, \eqref{expanded form}, \eqref{conclude3} and \eqref{effortless01} together yield
\begin{align}\label{a2m in y form}
\sum_{n=1}^\infty\sigma_{\mathbb{K},2m}(n)e^{-ny}&=\frac{L(1,\chi_D)\Gamma(2m+1)\zeta(2m+1)}{y^{2m+1}}+\frac{4(-1)^m}{\sqrt{D_{\mathbb{K}}}}\left(\frac{2\pi}{y}\right)^{2m+1}\sum_{n=1}^\infty \frac{\sigma_{\mathbb{K},-2m}(n)}{n^{-2m}}\mathrm{Ker}\left(4\pi\sqrt{\frac{2n\pi}{yD_{\mathbb{K}}}}\right).
\end{align}
Finally, we substitute $y$ by $\frac{8\pi^2\alpha}{D_{\K}}$ with $\alpha\beta=\frac{D_{\K}^2}{16\pi^2}$ in \eqref{a2m in y form} to conclude \eqref{a=2m}.

\section{Transformation formulas analogous to Ramanujan's identity for $\zeta(2m+1)$}
In this section, we exhibit an identity over imaginary quadratic field which is analogue to Ramanujan's identity \eqref{Ramanujan formula}. 

\begin{lemma}\label{zetk prime}
For any natural number $n>1$, we have
\begin{align*}
\zeta_{\K}'(1-n)=(-1)^{n-1}D_{\K}^{n-\frac{1}{2}}(2\pi)^{1-2n}((n-1)!)^2\zeta_{\K}(n).
\end{align*}
\end{lemma}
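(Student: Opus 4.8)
The plan is to read the identity off the functional equation of $\zeta_\K$ by differentiating it once and specialising to $s=n$, after first converting the gamma quotient into a trigonometric factor so that the pole/zero bookkeeping at $s=n$ becomes trivial. Concretely, I would start from \eqref{zetaK functional equation} in Proposition \ref{Analyticity of Dedekind zeta} and replace $s$ by $1-s$ to obtain
\[
\zeta_\K(1-s)=(2\pi)^{1-2s}D_\K^{s-\frac12}\,\frac{\Gamma(s)}{\Gamma(1-s)}\,\zeta_\K(s).
\]
Then I would apply Euler's reflection formula \eqref{Reflection formula} in the form $1/\Gamma(1-s)=\Gamma(s)\sin(\pi s)/\pi$, which turns the above into
\[
\zeta_\K(1-s)=\frac{(2\pi)^{1-2s}D_\K^{s-\frac12}\,\Gamma(s)^2}{\pi}\,\sin(\pi s)\,\zeta_\K(s)\;=:\;\phi(s)\sin(\pi s),
\]
where $\phi$ is holomorphic in a neighbourhood of $s=n$: for $n>1$ the factor $\Gamma(s)^2$ is holomorphic there, and $\zeta_\K$ has its only pole at $s=1$ by Proposition \ref{Analyticity of Dedekind zeta}.

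Next I would differentiate both sides with respect to $s$, giving $-\zeta_\K'(1-s)=\phi'(s)\sin(\pi s)+\pi\phi(s)\cos(\pi s)$, and evaluate at $s=n$. Since $\sin(\pi n)=0$ and $\cos(\pi n)=(-1)^n$, only the second term survives, so
\[
-\zeta_\K'(1-n)=\pi(-1)^n\phi(n)=(-1)^n(2\pi)^{1-2n}D_\K^{n-\frac12}\,\big((n-1)!\big)^2\,\zeta_\K(n),
\]
using $\Gamma(n)=(n-1)!$. Rearranging this yields $\zeta_\K'(1-n)=(-1)^{n-1}D_\K^{n-\frac12}(2\pi)^{1-2n}((n-1)!)^2\zeta_\K(n)$, which is precisely the statement of the lemma.

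There is no serious obstacle here; the one point worth flagging is the use of the reflection formula at the outset. If one instead differentiated the gamma quotient $\Gamma(s)/\Gamma(1-s)$ directly, one would be forced to evaluate $\lim_{s\to n}\psi(1-s)/\Gamma(1-s)$, i.e. to untangle a coincident pole of $\psi(1-s)$ and $\Gamma(1-s)$; rewriting $1/\Gamma(1-s)$ as $\Gamma(s)\sin(\pi s)/\pi$ packages exactly this limit into the elementary fact that $\sin(\pi s)$ has a simple zero at $s=n$, so the differentiation collapses to a single term. One should also note in passing that the same formula gives $\zeta_\K(1-n)=\phi(n)\sin(\pi n)=0$ for $n>1$, consistent with the trivial zeros of $\zeta_\K$, so no normalisation ambiguity arises.
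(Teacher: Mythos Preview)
Your argument is correct and follows precisely the approach indicated in the paper, namely differentiating the functional equation \eqref{zetaK functional equation} and evaluating at $s=n$; your use of the reflection formula to rewrite $\Gamma(s)/\Gamma(1-s)$ as $\Gamma(s)^2\sin(\pi s)/\pi$ is a clean way to carry this out, and the paper's one-line proof omits exactly such details.
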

\begin{proof}
The proof follows by taking derivative on the both sides of the functional equation \eqref{zetaK functional equation}.
\end{proof}

\subsection{Proof of Theorem \ref{Ramanujan analogue}}
For the first part, the idea here is to take limit $a\to-2m-1$ on the both sides of \eqref{Eqn:Analytic continuation}. We first evaluate the following limit:
\begin{align*}
L_{-2m-1}:=& \lim_{a\to -2m-1} \frac{1}{a+2m+1} \Bigg\{(-1)^m \frac{2\pi^2}{\sin(\pi a)}\frac{(a+2m+1)\left(\frac{4\pi^6 n^2}{y^2 D_{\K}^2}\right)^{-\frac{a}{4}-m}}{\Gamma^2\left(1-\frac{a}{2}-m\right) \Gamma^2\left(\frac{1-a}{2}-m\right)} \nonumber\\
&\times {}_1F_4\left(\begin{matrix}
1\\ 1 -\frac{a}{2}-m,  1 -\frac{a}{2}-m,  \frac{1-a}{2}-m, \frac{1-a}{2}-m 
\end{matrix} \bigg | -\frac{4\pi^6 n^2}{y^2 D_{\K}^2} \right)\nonumber\\
&-(a+2m+1)\Gamma\left(-\frac{1}{2}-\frac{a}{2}\right)\Gamma\left(\frac{3+a}{2}\right)\left(\frac{4\pi^{6}n^2}{y^2D_{\mathbb{K}}^2}\right)^{\frac{a}{4}}\mathrm{bei}\left(4\pi\sqrt{\frac{2n\pi}{yD_{\mathbb{K}}}}\right)\Bigg\}.
\end{align*}
Invoking \eqref{Reflection formula} on the gamma factors, the above equation yields
\begin{align*}
L_{-2m-1} &=  \lim_{a\to -2m-1} \frac{1}{a+2m+1} \Bigg\{(-1)^{m+1} 2\pi\frac{\Gamma(a+2m+2) \Gamma(-a-2m)}{\Gamma^2\left(1-\frac{a}{2}-m\right) \Gamma^2\left(\frac{1-a}{2}-m\right)} \left(\frac{4\pi^6 n^2}{y^2 D_{\K}^2}\right)^{-\frac{a}{4}-m} \nonumber\\
&\times {}_1F_4\left(\begin{matrix}
1\\ 1 -\frac{a}{2}-m,  1 -\frac{a}{2}-m,  \frac{1-a}{2}-m, \frac{1-a}{2}-m 
\end{matrix} \bigg | -\frac{4\pi^6 n^2}{y^2 D_{\K}^2} \right)\nonumber\\
&-2(-1)^{m+1} \Gamma\left(1+\frac{a+2m+1}{2}\right) \Gamma\left(1-\frac{a+2m+1}{2}\right) \left(\frac{4\pi^{6}n^2}{y^2D_{\mathbb{K}}^2}\right)^{\frac{a}{4}}\mathrm{bei}\left(4\pi\sqrt{\frac{2n\pi}{yD_{\mathbb{K}}}}\right)\Bigg\}.
\end{align*}
We now apply \eqref{BerBei} on the above equation to show that the above limit is of the form $0/0$ for $a=-2m-1$. Thus we can use L'Hopital's rule and apply \eqref{derivative1} with $\ell=-m$ to evaluate the limit as
\begin{align}\label{finalliml-2m-1}
L_{-2m-1} &=(-1)^{m+1}\left(\frac{2\pi^3n}{yD_{\mathbb{K}}}\right)^{-\frac{1}{2}-m}\left\{\pi\mathrm{ber}\left(4\pi\sqrt{\frac{2n\pi}{yD_{\mathbb{K}}}}\right)+4\mathrm{kei}\left(4\pi\sqrt{\frac{2n\pi}{yD_{\mathbb{K}}}}\right)\right\}.
\end{align}
It can also be observed that
\begin{align}\label{a=-2m-1lim0}
\lim_{a\to-2m-1}\left\{\frac{2^{-2a-3}\pi}{\sin (\pi a)}\left(\frac{2\pi^3n}{yD_{\K}}\right)^{-\frac{a}{2}-2}\frac{(-1)^m}{\Gamma^2(-a-1-2m)}\left(\frac{4\pi^6n^2}{y^2D_\K^2}\right)^{-m}\right\}=0.
\end{align}
We next investigate the last term on the right-hand side of \eqref{Eqn:Analytic continuation} which is of the form $0/0$ as $a\to-2m-1$, for $0\leq k\leq m-1$ due to the zeros of $\sin(\pi a)$ and $\zeta_{\mathbb{K}}(a+2k+2)$ but for $k=m$, the term reduces to $\infty/\infty$ form. Thus we evaluate these two limits separately using L' Hopital rule as
\begin{align}\label{finitesumlim2}
\lim_{a\to-2m-1}&\left\{\frac{D_{\mathbb{K}}^{a+\frac{3}{2}}}{(2\pi)^{2a+4}\sin(\pi a)}\sum_{k=0}^{m-1}\frac{(-1)^k\zeta(2k+2)\zeta_{\mathbb{K}}(a+2k+2)}{\Gamma^2(-1-a-2k)}\left(\frac{8\pi^3}{yD_{\mathbb{K}}}\right)^{-2k}\right\}\nonumber\\
&=\frac{D_{\mathbb{K}}^{\frac{1}{2}-2m}}{(2\pi)^{2-4m}}\sum_{k=0}^{m-1}\frac{(-1)^k\zeta(2k+2)}{\Gamma^2(2m-2k)}\lim_{a\to-2m-1}\left\{\frac{\zeta_{\mathbb{K}}'(a+2k+2)}{\cos(\pi a)\pi}\right\}\left(\frac{8\pi^3}{yD_{\mathbb{K}}}\right)^{-2k}\nonumber\\
&=-\frac{D_{\mathbb{K}}^{\frac{1}{2}-2m}}{\pi(2\pi)^{2-4m}}\sum_{k=0}^{m-1}\frac{(-1)^k\zeta(2k+2)\zeta_{\mathbb{K}}'(2k-2m+1)}{\Gamma^2(2m-2k)}\left(\frac{8\pi^3}{yD_{\mathbb{K}}}\right)^{-2k},
\end{align}
and
\begin{align}\label{finitesumlim1}
\lim_{a\to-2m-1}\left\{\frac{\zeta_{\mathbb{K}}(a+2m+2)}{\sin(\pi a)\Gamma^2(-1-a-2m)}\right\}&=\lim_{a\to-2m-1}\left\{\frac{(a+2m+1)\zeta_{\mathbb{K}}(a+2m+2)}{\frac{\sin(\pi a)}{(a+2m+1)}((a+2m+1)^2)\Gamma^2(-1-a-2m)}\right\}\nonumber\\
&=-\frac{2h}{w\sqrt{D_{\K}}}.
\end{align}
Taking limit as $a\to-2m-1$ overall in \eqref{Eqn:Analytic continuation}, the evaluations \eqref{finalliml-2m-1}, \eqref{a=-2m-1lim0}, \eqref{finitesumlim2} and \eqref{finitesumlim1} together yield
\begin{align}\label{ramanalogeqn}
&\sum_{n=1}^\infty\sigma_{\mathbb{K},-2m-1}(n)e^{-ny}=-\frac{1}{2}\zeta_{\mathbb{K}}(2m+1)+\frac{1}{y}\zeta_{\mathbb{K}}(2m+2)+\frac{(-1)^m2^{-2m}h\zeta(2m+1)}{w\sqrt{D_{\K}}\pi^{2m-1}y^{-2m}}\nonumber\\
&-\frac{(-1)^m(2\pi)^{4m-1}h}{\pi w\sqrt{D_{\K}} D_{\mathbb{K}}^{2m-\frac{1}{2}}}\zeta(2m+2)\left(\frac{8\pi^3}{yD_{\mathbb{K}}}\right)^{-2m}+\frac{(-1)^{m+1}2^{2-2m}\pi^{-2m}}{y^{-2m}\sqrt{D_{\mathbb{K}}}}\sum_{n=1}^\infty \frac{\sigma_{\mathbb{K},2m+1}(n)}{n^{2m+1}}\mathrm{kei}\left(4\pi\sqrt{\frac{2n\pi}{yD_{\mathbb{K}}}}\right)\nonumber\\ 
&-\frac{(2\pi)^{4m-2}}{\pi D_{\mathbb{K}}^{2m-\frac{1}{2}}}\sum_{k=0}^{m-1}\frac{(-1)^k\zeta(2k+2)\zeta'_{\mathbb{K}}(2k-2m+1)}{\Gamma^2(2m-2k)}\left(\frac{8\pi^3}{yD_{\mathbb{K}}}\right)^{-2k}.
\end{align}
Finally, we replace $k$ by $m-1-k$ and apply Lemma \ref{zetk prime} in the last term of the above equation then substitute $y$ by $\frac{8\pi^2\alpha}{D_{\K}}$ with $\alpha\beta=\frac{D_{\K}^2}{16\pi^2}$ in \eqref{ramanalogeqn} to arrive at \eqref{a=-2m-1 case}.

We next show the second part of our theorem. The idea of the proof goes along the similar direction as the previous part by taking limit as $a\to -2m$ on the both sides of \eqref{Eqn:Analytic continuation}. In this case, we need to determine the following limit:
\begin{align*}
L_{-2m}:&=\lim_{a\to-2m}\frac{1}{(a+2m)}\Bigg\{\frac{2(-1)^m \pi^2(a+2m)}{\sin(\pi a)\Gamma^2\left(1-\frac{a}{2}-m\right) \Gamma^2\left(\frac{1-a}{2}-m\right)}\left(\frac{4\pi^6 n^2}{y^2 D_{\mathbb{K}}^2}\right)^{-\frac{a}{4}-m} \nonumber\\
&\times {}_1F_4\left(\begin{matrix}
1\\ 1 -\frac{a}{2}-m,  1 -\frac{a}{2}-m,  \frac{1-a}{2}-m, \frac{1-a}{2}-m 
\end{matrix} \bigg | -\frac{4\pi^6 n^2}{y^2 D_{\mathbb{K}}^2} \right)\nonumber\\
&+(a+2m)\Gamma\left(-\frac{a}{2}\right)\Gamma\left(1+\frac{a}{2}\right)\left(\frac{4\pi^{6}n^2}{y^2D_{\mathbb{K}}^2}\right)^{\frac{a}{4}}\mathrm{ber}\left(4\pi\sqrt{\frac{2n\pi}{yD_{\mathbb{K}}}}\right)\Bigg \}.
\end{align*}
Applying \eqref{Reflection formula} on the gamma factors of the above equation, we obtain
\begin{align*}
L_{-2m}&=\lim_{a\to-2m}\frac{1}{(a+2m)}\Bigg\{ \frac{2\pi(-1)^m\Gamma(a+2m+1)\Gamma(1-a-2m)}{\Gamma^2\left(1-\frac{a}{2}-m\right) \Gamma^2\left(\frac{1-a}{2}-m\right)}\left(\frac{4\pi^6 n^2}{y^2 D_{\mathbb{K}}^2}\right)^{-\frac{a}{4}-m} \nonumber\\
&\times {}_1F_4\left(\begin{matrix}
1\\ 1 -\frac{a}{2}-m,  1 -\frac{a}{2}-m,  \frac{1-a}{2}-m, \frac{1-a}{2}-m 
\end{matrix} \bigg | -\frac{4\pi^6 n^2}{y^2 D_{\mathbb{K}}^2} \right)\nonumber\\
&-2(-1)^m\Gamma\left(1+\frac{a+2m}{2}\right)\Gamma\left(1-\frac{a+2m}{2}\right)\left(\frac{4\pi^{6}n^2}{y^2D_{\mathbb{K}}^2}\right)^{\frac{a}{4}}\mathrm{ber}\left(4\pi\sqrt{\frac{2n\pi}{yD_{\mathbb{K}}}}\right)\Bigg \}.
\end{align*}
It is clear by \eqref{BerBei} that the above limit reduces to $0/0$ form. Thus,  L'Hopital's rule is applicable to evaluate the limit. Applying it and using \eqref{derivative2} with $\ell=-m$ after simplification on the above limit, we have
\begin{align}\label{vl-2m3}
L_{-2m}&=(-1)^m2^{-m}\pi^{-3m}\left(\frac{yD_{\mathbb{K}}}{n}\right)^m\left\{4\mathrm{ker}\left(4\pi\sqrt{\frac{2n\pi}{yD_{\mathbb{K}}}}\right)-\pi\mathrm{bei}\left(4\pi\sqrt{\frac{2n\pi}{yD_{\mathbb{K}}}}\right)\right\}.
\end{align}
It is easy to see that
\begin{align}\label{vl-2mzero}
\lim_{a\to-2m}\left\{\frac{2^{-2a-3}\pi}{\sin (\pi a)}\left(\frac{2\pi^3n}{yD_{\mathbb{K}}}\right)^{-\frac{a}{2}-2}\frac{(-1)^m}{\Gamma^2(-a-1-2m)}\left(\frac{4\pi^6n^2}{y^2D_\mathbb{K}^2}\right)^{-m}\right\}=0.
\end{align}
Next, we evaluate the finite sum on the right-hand side of \eqref{Eqn:Analytic continuation} as $a\to-2m$, which is $0/0$ form for $0\leq k\leq m-2$ due to the zeros of $\sin(\pi a)$ and $\zeta_{\mathbb{K}}(a+2k+2)$. The $m$-th term of the finite sum goes to zero as $s\to-2m$ because of the double pole of $\Gamma^2(-1-a-2k)$ in the denominator. Next we show that the addition of $(m-1)$-th term of the finite sum and the fourth term on the left-hand side of \eqref{Eqn:Analytic continuation} provides $0/0$ form and for that we use the functional equation of $\zeta(s)$ in the asymmetric form to obtain
\begin{align*}
&\frac{(-1)^{m-1}yD_{\mathbb{K}}^{a+\frac{3}{2}}\zeta(2m)\zeta_{\mathbb{K}}(2m+a)}{(2\pi)^{2a+4}\sin(\pi a)\Gamma^2(1-a-2m)}\left(\frac{64\pi^6}{y^2D_{\mathbb{K}}^2}\right)^{1-m}-\frac{2\pi h\Gamma(a+1)\zeta(a+1)}{w\sqrt{D_{\K}} y^{a+1}}\nonumber\\
&=\frac{1}{\sin\left(\frac{\pi a}{2}\right)}\left\{\frac{(-1)^{m-1}yD_{\mathbb{K}}^{a+\frac{3}{2}}\zeta(2m)\zeta_{\mathbb{K}}(2m+a)}{2(2\pi)^{2a+4}\cos\left(\frac{\pi a}{2}\right)\Gamma^2(1-a-2m)}\left(\frac{64\pi^6}{y^2D_{\mathbb{K}}^2}\right)^{1-m}-\frac{2^{a+1}\pi^{a+2}h\zeta(-a)}{w\sqrt{D_{\K}}y^{a+1}}\right\}.
\end{align*}
The fact $\zeta_{\mathbb{K}}(0)=-\frac{h}{w}$ exhibits that the term inside the bracket on the right-hand side of the above expression is $0$ for $a=-2m$. Thus we have $0/0$ form on the above limit where we can apply  L'Hopital's rule to evaluate the limit as
\begin{align*}
&\lim_{a\to-2m}\left\{\frac{(-1)^{m-1}yD_{\mathbb{K}}^{a+\frac{3}{2}}\zeta(2m)\zeta_{\mathbb{K}}(2m+a)}{(2\pi)^{2a+4}\sin(\pi a)\Gamma^2(1-a-2m)}\left(\frac{64\pi^6}{y^2D_{\mathbb{K}}^2}\right)^{1-m}-\frac{2\pi h\Gamma(a+1)\zeta(a+1)}{w\sqrt{D_{\K}}y^{a+1}}\right\}\nonumber\\
&=\frac{2(-1)^m}{\pi}\Bigg\{\frac{(-1)^{m-1}yD_{\mathbb{K}}^{\frac{3}{2}}\zeta(2m)}{2(2\pi)^4}\left(\frac{64\pi^6}{y^2D_{\mathbb{K}}^2}\right)^{1-m}\Bigg[(-1)^m\zeta_{\mathbb{K}}(0)\left(\frac{D_\mathbb{K}}{4\pi^2}\right)^{-2m}\left(\log\left(\frac{D_\mathbb{K}}{4\pi^2}\right)-2\gamma\right)\nonumber\\
&\quad+(-1)^m\zeta'_{\mathbb{K}}(0)\left(\frac{D_\mathbb{K}}{4\pi^2}\right)^{-2m}\Bigg]+\frac{2\pi^2h}{yw\sqrt{D_{\K}}}\left(\frac{2\pi}{y}\right)^{-2m}\left(\zeta'(2m)-\zeta(2m)\log\left(\frac{2\pi}{y}\right)\right)\Bigg\}.
\end{align*}
The facts $\zeta_{\mathbb{K}}(0)=-\frac{\sqrt{D_{\mathbb{K}}}}{2\pi}L(1,\chi_{D}),\ \zeta'_{\mathbb{K}}(0)=\frac{\sqrt{D_{\mathbb{K}}}}{2\pi}L'(1,\chi_{D})-\frac{\sqrt{D_{\mathbb{K}}}}{2\pi}\left(\gamma-2\log\left(\frac{\sqrt{D_{\mathbb{K}}}}{2\pi}\right)\right)L(1,\chi_{D})$ and Proposition \ref{Class number formula} reduce the above equation as 
\begin{align}\label{limitofm-1andthirdterm}
&\lim_{a\to-2m}\left\{\frac{(-1)^{m-1}yD_{\mathbb{K}}^{a+\frac{3}{2}}\zeta(2m)\zeta_{\mathbb{K}}(2m+a)}{(2\pi)^{2a+4}\sin(\pi a)\Gamma^2(1-a-2m)}\left(\frac{64\pi^6}{y^2D_{\mathbb{K}}^2}\right)^{1-m}-\frac{2\pi h\Gamma(a+1)\zeta(a+1)}{w\sqrt{D_\K} y^{a+1}}\right\}\nonumber\\
&=\frac{(-1)^m}{\pi}\left(\frac{y}{2\pi}\right)^{2m-1}\left\{\left(\zeta'(2m)-\gamma \zeta(2m)-\zeta(2m)\log\left(\frac{2\pi}{y}\right)\right)L(1,\chi_{D})-L'(1,\chi_{D})\zeta(2m)\right\}.
\end{align}
We also have
\begin{align}\label{a=-2mfinitesumlimit}
&\lim_{a\to-2m}\left\{\frac{D_{\mathbb{K}}^{a+\frac{3}{2}}y}{(2\pi)^{2a+4}\sin(\pi a)}\sum_{k=0}^{m-2}(-1)^k\frac{\zeta(2k+2)\zeta_{\mathbb{K}}(a+2k+2)}{\Gamma^2(-1-a-2k)}\left(\frac{8\pi^3}{yD_{\mathbb{K}}}\right)^{-2k}\right\}\nonumber\\
&=\frac{yD_{\mathbb{K}}^{\frac{3}{2}-2m}}{\pi(2\pi)^{4-4m}}\sum_{k=0}^{m-2}(-1)^k\frac{\zeta(2k+2)\zeta'_{\mathbb{K}}(2k-2m+2)}{\Gamma^2(2m-2k-1)}\left(\frac{64\pi^6}{y^2D_{\mathbb{K}}^2}\right)^{-k}.
\end{align}
Invoking \eqref{vl-2m3}, \eqref{vl-2mzero}, \eqref{limitofm-1andthirdterm} and \eqref{a=-2mfinitesumlimit} and taking limit as $a\to-2m$ in Theorem \ref{Eqn:Analytic continuation}, we arrive at
\begin{align}\label{casea=-2meqn}
\sum_{n=1}^\infty\sigma_{\mathbb{K},-2m}(n)&e^{-ny} =-\frac{1}{2}\zeta_{\mathbb{K}}(2m)+\frac{1}{y}\zeta_{\mathbb{K}}(2m+1)+\frac{4(-1)^m}{\pi\sqrt{D_{\mathbb{K}}}}\left(\frac{2\pi^2}{y}\right)^{1-2m}\sum_{n=1}^\infty\frac{\sigma_{\mathbb{K},2m}(n)}{n^{2m}}\mathrm{ker}\left(4\pi\sqrt{\frac{2n\pi}{yD_{\mathbb{K}}}}\right)\nonumber\\
&+\frac{(-1)^m}{\pi}\left(\frac{y}{2\pi}\right)^{2m-1}\left\{\left(\zeta'(2m)-\gamma\zeta(2m)-\zeta(2m)\log\left(\frac{2\pi}{y}\right)\right)L(1,\chi_D)-L'(1,\chi_D)\zeta(2m)\right\}\nonumber\\
&+\frac{y}{2}\left(\frac{2\pi}{\sqrt{D_\K}}\right)^{4m-3} \, \ \sum_{k=0}^{m-2}\frac{(-1)^k\zeta(2k+2)\zeta'_{\mathbb{K}}(2k-2m+2)}{\Gamma^2(2m-2k-1)}\left(\frac{64\pi^6}{y^2D_{\mathbb{K}}^2}\right)^{-k}.
\end{align}
Finally, we replace $k$ by $m-1-k$ and apply Lemma \ref{zetk prime} in the last term of the above equation then substitute $y$ by $\frac{8\pi^2\alpha}{D_{\K}}$ with $\alpha\beta=\frac{D_{\K}^2}{16\pi^2}$ in \eqref{casea=-2meqn} to conclude \eqref{a=-2m case}. This completes the proof of Theorem \ref{Ramanujan analogue}.

\section{Concluding remarks}
Zagier \cite{Zagier} asked whether there is a formula for $\zeta_\K(4), \zeta_\K(6)$ etc. attached an arbitrary imaginary quadratic field $\K$ similar to \eqref{Eqn: Zagier}. He also remarked that the answer to this question is possible if one can prove the result using methods available in analytic number theory. Here, we have obtained the relation between two zeta values for any complex arguments in terms of infinite series. Moreover, Theorem \ref{Ramanujan analogue} provides an explicit relation between even and odd zeta values over an imaginary quadratic field. Thus the expression of $\zeta_\K(2)$ (cf. Theorem \ref{a=-1 case}) together with Theorem \ref{Ramanujan analogue} expresses any zeta value at positive integers over an imaginary quadratic field in terms of Lambert series :
\begin{align*}
\sum_{n=1}^\infty\sigma_{\mathbb{K}, a}(n)e^{-ny}, \hspace{1cm} \sum_{n=1}^\infty\sigma_{\mathbb{K}, a}(n)\mathrm{ker}(\sqrt{ny})  \hspace{.5cm} \text{and} \hspace{.5cm} \sum_{n=1}^\infty\sigma_{\mathbb{K}, a}(n)\mathrm{kei}(\sqrt{ny}).
\end{align*}
These series demand independent study since their behaviour may lead to some important information about the arithmetic nature of Dedekind zeta function over an imaginary quadratic field.

\vspace{.4cm}
\noindent
\textbf{Acknowledgements.}
The authors would like to show their sincere gratitude to Prof. Atul Dixit for the fruitful discussions and suggestions on the manuscript. The authors are also grateful to IIT Gandhinagar and Prof. Atul Dixit for their financial support.

\end{document}